\documentclass{amsart}
\usepackage{amssymb} 
\usepackage{graphicx} 
\usepackage{dsfont} 
\usepackage{tikz}
\usepackage{quiver}
\usepackage{hyperref}
\usepackage[shortlabels]{enumitem} 

\hypersetup{
  colorlinks   = true, 
  urlcolor     = cyan, 
  linkcolor    = blue, 
  citecolor   = black 
}

\usepackage[backend=biber,style=alphabetic,citestyle=alphabetic,url=false,isbn=false,maxnames=5,maxalphanames=5]{biblatex}
\makeatletter
\g@addto@macro\th@plain{\thm@headpunct{}}
\g@addto@macro\th@definition{\thm@headpunct{}}
\g@addto@macro\th@remark{\thm@headpunct{}}
\makeatother

\makeatletter
\newtheorem*{rep@theorem}{\rep@title}
\newcommand{\newreptheorem}[2]{%
\newenvironment{rep#1}[1]{%
 \def\rep@title{#2 \ref{##1}}%
 \begin{rep@theorem}}%
 {\end{rep@theorem}}}
\makeatother

\newtheorem{theorem}{Theorem}[section]
\newreptheorem{theorem}{Theorem}
\newtheorem{proposition}[theorem]{Proposition}
\newreptheorem{proposition}{Proposition}

\newtheorem{corollary}[theorem]{Corollary}
\newreptheorem{corollary}{Corollary}
\newtheorem{lemma}[theorem]{Lemma}
\newtheorem{conjecture}[theorem]{Conjecture}
\theoremstyle{definition}
\newtheorem{example}[theorem]{Example}
\newtheorem{definition}[theorem]{Definition}

\newtheorem{remark}[theorem]{Remark}
\newtheorem{question}[theorem]{Question}

\newcommand{\1}{\mathds 1}
\newcommand{\B}{\mathcal B}
\newcommand{\C}{\mathcal C}
\newcommand{\D}{\mathcal D}
\newcommand{\E}{\mathcal E}
\newcommand{\Z}{\mathcal Z}
\newcommand{\CC}{\mathbb C}
\newcommand{\DD}{\mathbb D}
\newcommand{\RR}{\mathbb R}
\newcommand{\HH}{\mathbb H}
\newcommand{\KK}{\mathbb K}
\newcommand{\ZZ}{\mathbb Z}
\newcommand{\Mod}{\mathrm{Mod}}
\newcommand{\Hom}{\mathrm{Hom}}
\newcommand{\End}{\mathrm{End}}
\newcommand{\Rep}{\mathrm{Rep}}
\newcommand{\Aut}{\mathrm{Aut}}

\newcommand{\Pic}{\mathcal Pic}
\newcommand{\BrPic}{\mathcal Br\mathcal Pic}
\newcommand{\Witt}{\mathcal Witt}
\newcommand{\Mext}{\mathcal Mext}
\renewcommand{\Vec}{\mathrm{Vec}}
\newcommand{\sVec}{\mathbf{s}\hspace{-1pt}\mathrm{Vec}}
\newcommand{\FPdim}{\mathrm{FPdim}}
\newcommand{\id}{\mathrm{id}}

\newcommand{\Tri}{\mathbb{T}\mathrm{ri}}
\newcommand{\TC}{\mathrm{TC}}
\newcommand{\TF}{\mathrm{3F}}
\newcommand{\DS}{\mathrm{DS}}
\newcommand{\Sem}{\mathcal Sem}
\newcommand{\Q}{\mathcal Q}
\newcommand{\CF}{\mathrm{CF}}
\newcommand{\Gr}{\mathrm{Gr}}

\usepackage{amsaddr}
\usepackage{orcidlink}

\author[S. Sanford]{Sean Sanford\, \orcidlink{0000-0002-2439-3764}}
\address{School of Mathematics, The University of Edinburgh, Edinburgh, UK EH9 3FD}
\email{ssanford@ed.ac.uk}
\subjclass[2020]{18M15, 18M20}
\keywords{Minimal Nondegenerate Extensions, Witt Groups, Quaternionic Super-Vector Spaces}
\title{On the Mext groups of \texorpdfstring{$\sVec_{\RR}$}{sVec_R} and \texorpdfstring{$\sVec_{\HH}$}{sVec_H}}
\date{September 2026}

\begin{document}

\begin{abstract}
    We compute the groups of minimal nondegenerate extensions of the real symmetric fusion categories $\sVec_\RR$ and $\sVec_\HH$.
    We find that they are both isomorphic to the Klein-four group $(\ZZ/2\ZZ)^2$.
    Along the way, we classify all nondegenerately braided fusion categories over $\RR$ that have Frobenius-Perron dimension 4, and we determine exactly which of these categories is a Drinfeld center.
    We end the paper by discussing a homotopy-theoretic conjecture that organizes all these structures.
\end{abstract}

\maketitle

The classification of minimal nondegenerate extensions (MNEs) of braided fusion categories has attracted the attention of many researchers in both mathematics and condensed matter physics \cite{
MR1990929,
MR2200691,
MR3641612,
MR3613518,
MR3775361,
venegasramírez2019minimalmodularextensionssupertannakian,
MR4281262,
MR4496388,
MR4432444,
MR4504933,
MR4560997,
MR4654609,
MR4920627,
MR4850473,
MR4971769,
MR5089380,
johnsonfreyd2026structurewittgroupsminimal}.
For a symmetry protected topological phase of matter, hidden degrees of freedom that are protected by the symmetry can often be detected by gauging the symmetry.
According to Lan, Kong, and Wen \cite{MR3613518}, minimal nondegenerate extensions are a mathematical way of performing this gauging process for (2+1)D theories.

Let us briefly describe what an MNE is.
An object in a braided fusion category is said to be transparent if it braids trivially with all other objects, and a braiding is said to be nondegenerate if $\1$ is the only transparent simple object.
A nondegenerate extension of a braided category $\mathcal E$ is a braided embedding of $\mathcal E$ into some category $\mathcal C$, whose braiding is nondegenerate.
Such an extension is minimal if the only objects in $\mathcal C$ that are transparent to all of $\mathcal E$ were already in $\mathcal E$.

Nondegeneracy of a braiding is analogous to nondegeneracy of a bilinear form, and the theory of braided fusion categories can be seen as a generalization of the theory of pre-metric groups (\emph{i.e.} groups equipped with a quadratic form).
Nikshych emphasizes this analogy in the introduction to \cite{MR4504933}, where he interprets it as telling us that the theory of braided fusion categories behaves like a kind of `categorified linear algebra'.
This generalization follows from a theorem of Eilenberg and Mac Lane \cite{MR65162}, which was later translated into the language of braided monoidal categories in \cite{MR1250465}, and interpreted in the modern language of braided fusion categories in \cite{MR2609644}.

The collection of MNEs for the category $\sVec_\CC$ of super-vector spaces, denoted $\Mext(\sVec_\CC)$, was computed by Kitaev in \cite{MR2200691}, where he found a $\ZZ/16\ZZ$ group structure, now known as Kitaev's 16-fold way.
Lan, Kong, and Wen later generalized this in \cite{MR3613518} by proving that for any symmetric fusion category $\E$, there is a natural group structure on $\Mext(\E)$.
This group structure can be understood in terms of Witt groups.

The operation of direct sum endows the collection of all finite metric groups with the structure of an abelian monoid.
The quotient of this monoid by hyperbolic metric groups becomes an abelian group known as the Witt group of metric groups, denoted in the literature by $\Witt_{ptd}$.
In \cite{MR3039775}, Davydov, Müger, Nikshych, and Ostrik introduced an analogous construction for nondegenerately braided fusion categories, denoted $\Witt(\Vec_\CC)$, and showed that this contains $\Witt_{ptd}$ as a subgroup.
Davydov, Nikshych, and Ostrik later showed in \cite{MR3022755} that there is a super version $\Witt(\sVec_\CC)$ of the Witt group, and that $\Mext(\sVec_\CC)$ can be canonically identified with the kernel of the `superification functor'
\[(-)\boxtimes\sVec_\CC:\Witt(\Vec_\CC)\to\Witt(\sVec_\CC)\,.\]
A more general version of this statement was proven in \cite{MR3613518}, where they established that there is always a surjective map
\[U_{\E}:\Mext(\E)\twoheadrightarrow\ker\Big((-)\boxtimes\E:\Witt(\Vec_\CC)\to\Witt(\E)\Big)\,,\]
for any symmetric fusion category $\E$ over $\CC$.

This functor and its kernel can be interpreted in the language of higher Galois theory, originally outlined in \cite{MR3623677}.
In this paper, Jonson-Freyd invites us to think of $\Vec_\RR$, $\Vec_\CC$, and $\sVec_\CC$ as categorified fields, and he offers an interpretation of $\sVec_\CC$ as the algebraic closure of $\Vec_\CC$.
From this perspective, the Witt groups are higher analogues of Brauer groups, and $\Mext$ corresponds to an analogue of the relative Brauer group of an extension.

In addition to these familiar categorified fields, Jonson-Freyd observes that there is one additional symmetric fusion category that lies between $\Vec_\RR$ and $\sVec_\CC$: $\sVec_\HH$.
Its existence can be seen from the Galois correspondence, and the fact that there is a `subgroup' $\ZZ/2\ZZ\hookrightarrow\ZZ/2\ZZ\times B\ZZ/2\ZZ$, corresponding to the product map $(\id,\mathrm{Sq}^1)$.
The Hasse diagram of categorified field extensions is shown below.
\begin{equation}\label{eqn:Hasse diagram}
    \begin{tikzcd}[row sep=5,ampersand replacement=\&]
    	\&\& {\mathbf{s}\kern-1pt\mathrm{Vec}_{\mathbb C}} \&\&\& \\
    	\\
    	\&\&\&\&\& {\mathrm{Vec}_{\mathbb C}} \\
    	{\mathbf{s}\kern-1pt\mathrm{Vec}_{\mathbb R}} \& {\mathbf{s}\kern-1pt\mathrm{Vec}_{\mathbb H}} \\
    	\\
    	\&\& {\mathrm{Vec}_{\mathbb R}}
    	\arrow[no head, from=3-6, to=1-3]
    	\arrow[no head, from=4-1, to=1-3]
    	\arrow[no head, from=4-2, to=1-3]
    	\arrow[no head, from=6-3, to=3-6]
    	\arrow[no head, from=6-3, to=4-1]
    	\arrow[no head, from=6-3, to=4-2]
    \end{tikzcd}
\end{equation}

Just as $\Mext(\sVec_\CC)$ was used to aid in the analysis of $\Witt(\Vec_\CC)$ and $\Witt(\sVec_\CC)$ in \cite{MR3022755}, it is reasonable to expect that knowing $\Mext(\sVec_\RR)$ and $\Mext(\sVec_\HH)$ will be helpful when trying to compute $\Witt(\Vec_\RR)$, $\Witt(\sVec_\RR)$, and $\Witt(\sVec_\HH)$.
In this paper, we will compute both $\Mext(\sVec_\RR)$ and $\Mext(\sVec_\HH)$, and show that this does indeed yield some new information regarding these Witt groups.
Our main results are the following theorems

\begin{reptheorem}{thm:Mext(sVec_R) is Klein-four}
    The group of $\mathcal Mext(\sVec_{\RR})$ of minimal modular extensions of $\sVec_{\RR}$ is isomorphic to the Klein-four group $(\ZZ/2\ZZ)^2$.
    The four classes of extensions are represented by the categories:
    \[
        \TC_{\RR},\hspace{3mm}\TC_{\HH}^b,\hspace{3mm}\TF_{\RR},\hspace{2mm}\text{and}\hspace{2mm}\TF_{\HH}\,,
    \]
    with $\TC_\RR$ playing the role of the identity.
\end{reptheorem}

Here, $\TC$ stands for the toric code, and $\TF$ stands for the 3-fermion theory (see Section \ref{sec:Classification of real ndBFCS of dim=4} for more discussion and notation).
The additional decorations indicate the various real forms that these categories admit.

\begin{reptheorem}{thm:Mext(sVec_H) is Klein-four}
    The group of $\mathcal Mext(\sVec_{\HH})$ of minimal modular extensions of $\sVec_{\HH}$ is isomorphic to the Klein-four group $(\ZZ/2\ZZ)^2$.
    The four classes of extensions are represented by the categories:
    \[
        \TC_{\HH}^f,\hspace{3mm}\TC_{\Tri},\hspace{3mm}\TF_\HH,\hspace{2mm}\text{and}\hspace{2mm}\TF_{\Tri}\,,
    \]
    with $\TC_{\HH}^f$ playing the role of the identity.
\end{reptheorem}

In order to prove these theorems, we fully classify all nondegenerately braided fusion categories over $\RR$ of $\FPdim=4$.
\begin{reptheorem}{thm:List of all ndBFCs over R of dim=4}
    Every nondegenerately braided fusion category $\C$ over $\RR$ of \linebreak$\FPdim(\C)=4$ is a real form of $\TC$, $\DS$ (double semion), or $\TF$.
    The possible real forms are $\TC_\RR$, $\TC_\HH^f$, $\TC_\HH^b$, $\TC_{\Tri}$, $\DS_\RR$, $\TF_\RR$, $\TF_\HH$, and $\TF_{\Tri}$ (See Section \ref{sec:Classification of real ndBFCS of dim=4} for notation).
\end{reptheorem}

Inspired by Lan, Kong, and Wen's surjectivity result \cite[Proposition 5.15]{MR3613518}, we replace $\CC$ with $\RR$ and investigate the corresponding maps $U_\RR$ for $\E=\sVec_\RR$ and $U_\HH$ for $\E=\sVec_\HH$.
In order to understand the kernel of these maps, we fully classify all fusion categories over $\RR$ whose centers have a real unit and Frobenius-Perron dimension 4.

\begin{reptheorem}{thm:All the centers of dim=4}
    Suppose $\C$ is a fusion category over $\RR$, with $\Omega\Z\C:=\End(\1_{\Z(\C)})\cong\RR$ and $\FPdim(\Z(\C))=4$.
    Then the pair $(C,\Z(\C))$ corresponds to exactly one of the rows in the table below.
    \begin{equation}\label{eqn:Table of centers (intro)}
        \begin{array}{c|c}
            \C & \Z(\C) \\\hline\\[-8pt]
            \Vec_\RR(\ZZ/2\ZZ) & \TC_\RR\\[2pt]
            \Vec_\RR^\omega(\ZZ/2\ZZ) & \DS_\RR\\[2pt]
            \C_\HH(1,\chi,-1/2) & \TC_\HH^f\\[2pt]
            \C_\HH(1,\chi,+1/2) & \DS_\RR\\[2pt]
            \Vec_\CC\big((\ZZ/2\ZZ)^2_{\text{Gal}}\big) & \TC_\RR\\[2pt]
            \Vec_\CC\big((\ZZ/4\ZZ)_{\text{Gal}}\big) & \TC_\HH^f\\[2pt]
            \C_{\overline{\CC}}(\ZZ/2\ZZ,\chi) & \TC_{\Tri}\\[2pt]
            \Vec_\CC^\omega\big((\ZZ/2\ZZ)^2_{\text{Gal}}\big) & \DS_\RR\\
        \end{array}
    \end{equation}
\end{reptheorem}

In particular, this shows that $\TC_{\Tri}$ is a Drinfeld center, thus correcting an error in \cite[Appendix A]{MR4934638}.

\begin{reptheorem}{cor:Bosonic TC_H is the true class}
    The category $\TC_\HH^b$\footnote{not $\TC_{\Tri}$ as previously claimed in \cite{MR4934638}} is not a Drinfeld center, and thus represents the unique nontrivial class in
    \[\ker\big(\Witt(\Vec_\RR)\to\Witt(\Vec_\CC)\big)\cong\ZZ/2\ZZ\,.\]
\end{reptheorem}

Comparing Table (\ref{eqn:Table of centers (intro)}) with Theorem \ref{thm:Mext(sVec_R) is Klein-four}, we see that only the unit of $\Mext(\sVec_\RR)$ is Witt trivial.

\begin{reptheorem}{cor:No kernel for Mext(sVec_R)}
    The map  $U_{\RR}:\C\mapsto[\C]$ from $\Mext(\sVec_\RR)$ to $\Witt(\Vec_\RR)$ yields an identification
    \[\Mext(\sVec_\RR)\cong\ker\big(\Witt(\Vec_\RR)\to\Witt(\sVec_\RR)\big)\,.\]
\end{reptheorem}

A similar comparison of Table (\ref{eqn:Table of centers (intro)}) with Theorem \ref{thm:Mext(sVec_H) is Klein-four} shows that some of these MNEs are Witt trivial over $\RR$.

\begin{reptheorem}{cor:Some kernel for Mext(sVec_H)}
    The map $U_\HH:\C\mapsto[\C]$ from $\Mext(\sVec_\HH)$ to $\Witt(\Vec_\RR)$ has kernel $\ZZ/2\ZZ$; generated by $\TC_{\Tri}$.
\end{reptheorem}

The presence of $\TC_{\Tri}=\Z(\C_{\overline{\CC}}(\ZZ/2\ZZ,\chi))$ in this kernel can be seen as a consequence of the following fact.

\begin{repproposition}{prop:Nontrivial auto-Witt equivalence for sVec_H}
    The fusion category $\C_{\overline{\CC}}(\ZZ/2\ZZ,\chi)$ is a non-Morita-trivial $\sVec_\HH$-Witt equivalence from $\sVec_\HH$ to itself.
\end{repproposition}

In the final section of the paper, we describe a higher categorical conjecture that organizes all of these ideas.
One potential consequence of this conjecture would be that Theorem \ref{cor:Some kernel for Mext(sVec_H)} and Proposition \ref{prop:Nontrivial auto-Witt equivalence for sVec_H} should imply one another.


\section{Acknowledgements}
This paper answers a question posed to me by Theo Johnson-Freyd at the 2026 Paris workshop for the Simons Collaboration on Global Categorical Symmetries.
I would like to thank Theo for posing the question, and to thank the Simons Foundation for the opportunity to attend the workshop.
Special thanks are due to Thibault Décoppet, who clarified my thinking on $\Mext$ groups.
Some of the arguments used in this paper were taken directly from conversations with Thibault, and I am grateful for the opportunity to present them here.
Finally, thanks are due to Julia Plavnik for suggesting the investigation of Tambara-Yamagami categories, which turned out to be an essential ingredient of many proofs in this paper.

The tool \href{Connected Papers}{https://www.connectedpapers.com} was used to find references to minimal nondegenerate extensions in the literature.
Generative AI text was not used in the preparation of this manuscript, nor was it used during the research phase of the project.

Research for this paper was funded, in parts, by the Simons Collaboration on Global Categorical Symmetries (award number 888988), the Engineering and Physical Sciences Research Council Open Fellowship
“Complex Quantum Topology” (grant number EP/Y008812/1), and the European Research Council grant on Non-compact Chern-Simons Theory, Positive Representations, and Cluster Varieties(grant agreement ID: 948885, \href{https://doi.org/10.3030/948885}{https://doi.org/10.3030/948885}).


\section{Some facts about fusion categories over \texorpdfstring{$\RR$}{R}.}

\subsection{Basics}

We assume that the reader is familiar with basic concepts from the theory of braided tensor categories.
Familiarity with Chapters 3 and 8 of \cite{MR3242743} will be helpful.

Our main objects of study are $\sVec_\RR$ and $\sVec_\RR$, which are fusion categories over $\RR$.
Fusion categories over non-algebraically closed fields were investigated in \cite{MR4806973}, and we will adopt those conventions.
Section 3 of \cite{MR5003359} offers an introduction to fusion categories over $\RR$ specifically.
For the more physically-minded reader, the recent paper \cite{wen2026categoricaltimereversalsymmetries} discusses many real fusion categories, as well as their relation with time-reversal symmetry.

Schur's lemma implies that the endomorphism algebra of any simple object is a division algebra.
Over $\RR$, the finite dimensional division algebras are $\RR$, $\CC$, and $\HH$.
When a simple object has this as its division algebra, we will say that the simple object is real, complex, or quaternionic, respectively.
In general, a simple object $X$ with $\End(X)$ equal to the base field are called split, or split-simple.

All monoidal categories $\C$ have a preferred object, the monoidal unit $\1$.
This object can be viewed as a pointing, that is to say, a functor from the trivial category to $\C$.
In analogy with algebraic topology, we will write $\Omega\C$ for the endomorphisms of the pointing.

\begin{definition}
    The endomorphism algebra $\End(\1)$ of the unit object $\1$ in a monoidal category $\C$ will be denoted $\Omega\C$.
\end{definition}

The endomorphisms of the unit are always commutative, by the Eckmann-Hilton argument, so when $\C$ is fusion over $\mathbb R$, $\Omega\C$ is either $\RR$ or $\CC$.

For any object $X$, there are left and right algebra embeddings of $\Omega\C$ into $\End(X)$.
When $\Omega\C=\RR$, this is just the normal scalar product.
When $\Omega\C\cong\CC$, since we are in an $\RR$-linear setting, it is possible for the left and right embeddings to differ by complex conjugation.
When this happens, we say that the object $X$ is Galois nontrivial (see \cite[Section 3.2]{MR5003359} for a more detailed account).

The Deligne tensor product $\boxtimes=\boxtimes_{\RR}$ of simple objects over $\RR$ is controlled by the Artin-Wedderburn decomposition of tensor products of algebras over $\RR$, shown below.
\[
    \begin{array}{c|c|c|c|}
        \otimes_\RR & \RR & \CC & \HH \\\hline
        \RR & \RR & \CC & \HH \\\hline
        \CC & \CC & \CC\oplus\CC & M_2(\CC) \\\hline
        \HH & \HH & M_2(\CC) & M_4(\RR)\\\hline
    \end{array}
\]
The above table shows that the Deligne product of two complex simples must decompose into two nonisomorphic simples, and the Deligne product of two quaternionic simples must decompose into four copies of a unique simple object.

\begin{example}
    The category $\sVec_\HH$ has two simple objects, $\1$ and $Y$.  The unit $\1$ is real, and the other simple $Y$ is quaternionic.
    In the Deligne product $\sVec_\HH\boxtimes\Vec_\CC$, the object $Y\boxtimes\CC$ is not simple, but decomposes into two copies of a unique simple object.
    Of course, this product category is just the complexification of $\sVec_\HH$, which is $\sVec_\CC$, and the simple object that appears inside of $Y\boxtimes\CC$ is just the odd line.
\end{example}

\begin{definition}
    A simple object $Y$ in a fusion category $\C$ over $\KK$ is said to be quasi-invertible if $Y\otimes Y^*\cong n\cdot\1$, for some positive integer $n$.
\end{definition}

\begin{example}
    The object $Y\in \sVec_{\HH}$ is self-dual, and satisfies $Y\otimes Y\cong4\cdot\1$, so it is quasi-invertible.
\end{example}

This notion of quasi-invertible object and the upcoming lemma seem to be a minor observation, but they end up being very convenient for analyzing fusion rings.
At the moment, we are unaware of this appearing in the literature, though we expect that it was known to experts.

\begin{lemma}\label{lem:Quasi-invertible rules}
    For any quasi-invertible $Y\in\C$, the algebra $\End(Y)$ is central simple over $\Omega\C$, and for any other object $X\in\C$, 
    \[\End(X\otimes Y)\cong\End(X)\otimes_{\Omega\C}\End(Y)\,.\]
\end{lemma}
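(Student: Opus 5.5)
We will use the duality adjunction to turn the endomorphism algebras in question into Hom spaces out of $Y\otimes Y^*\cong n\cdot\1$, and then compare dimensions against a natural algebra map. Throughout, write $K=\Omega\C$, which is a field ($\RR$ or $\CC$). Note that $\End(X)$ and $\End(Y)$ are $K$-algebras via the left action of $K=\End(\1)$ by $f\mapsto \id_\1\otimes f$. If $X$ or $Y$ is Galois nontrivial, the left and right $K$-structures differ, so we must fix conventions. We take $\End(X)$ as a right $K$-module via the right embedding and $\End(Y)$ as a left $K$-module via the left embedding. Then $\otimes_K$ is the balanced tensor product, and it receives the map
\[
\phi:\End(X)\otimes_K\End(Y)\to\End(X\otimes Y),\qquad f\otimes g\mapsto f\otimes g,
\]
which is well defined because $(f\cdot\lambda)\otimes g=f\otimes(\lambda\cdot g)$ by the interchange law through $X\otimes\1\otimes Y$.

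\textbf{Central simplicity of $\End(Y)$.} Since $Y$ is simple, $\End(Y)$ is a division algebra. By adjunction,
\[
\End(Y)\cong\Hom(Y\otimes Y^*,\1)\cong\Hom(n\cdot\1,\1)\cong K^n,
\]
so $\dim_K\End(Y)=n$. In the other direction, $\End(Y\otimes Y^*)\cong M_n(K)$. The map $\End(Y)\otimes_K\End(Y^*)^{}\to\End(Y\otimes Y^*)$ is a $K$-algebra map. Its source is nonzero and each factor is a division algebra, so when the source is simple the map is injective. Comparing dimensions ($n\cdot n=n^2$) would then show it is an isomorphism onto $M_n(K)$, and a tensor factor of a central simple algebra with commuting centralizer is central simple. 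To organise this cleanly, we plan to show first that $Z(\End(Y))=K$. A central element $z$ of $\End(Y)$ gives the endomorphism $z\otimes\id_{Y^*}$ of $n\cdot\1$, which commutes with the image of $\End(Y^*)$ acting on the other factor. The image of $\End(Y)\otimes\End(Y^*)$ together with $K$ is all of $M_n(K)$, using the dimension count $\dim\End(Y^*)=n$ and injectivity on each factor. Hence $z\otimes\id$ is central in $M_n(K)$, i.e.\ a scalar, so $z\in K$. With the center equal to $K$, $\End(Y)$ is a central division algebra over $K$, hence central simple, and $\End(Y)\otimes_K\End(Y^*)\to M_n(K)$ is injective and therefore an isomorphism.

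\textbf{The isomorphism $\phi$.} For the main statement, we first compute the dimension of the target by adjunction:
\[
\End(X\otimes Y)\cong\Hom(X\otimes Y\otimes Y^*,X)\cong\Hom(n\cdot X,X)\cong\End(X)^n,
\]
so both sides of $\phi$ have $K$-dimension $n\dim_K\End(X)$. It remains to show $\phi$ is injective. Reduce to $X$ simple by additivity; then $\End(X)$ is a division algebra. The tensor product $D\otimes_K A$ of a division algebra $D$ (or a matrix algebra over one) with a central simple algebra $A$ is simple. So $\phi$ is a nonzero algebra map out of a simple algebra, hence injective, and the dimension count finishes the proof. For general $X=\bigoplus X_i^{m_i}$, both sides decompose compatibly into matrix blocks $\Hom(X_i,X_j)\otimes\End(Y)$, and the same argument applies blockwise.

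\textbf{Main obstacle.} The delicate step is the Galois-nontrivial case. There, "$\otimes_K$" must be read with the correct left and right structures, and $\End(X)$ need not be a $K$-algebra in the naive central sense. The simplicity of $D\otimes_K A$ needs $A$ to be central over the $K$ acting through $Y$'s left embedding. We would handle this by treating $\End(X)$ as a $K$-bimodule and checking that $\End(X)\otimes_K\End(Y)$ is still simple, using that $\End(Y)$ is central simple for the left $K$-structure. If that fails, we would instead prove injectivity of $\phi$ directly: compose with the adjunction isomorphism and the coevaluation to reduce to the faithful action on $X\otimes n\cdot\1$.
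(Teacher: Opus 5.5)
\textbf{There is a genuine gap in your proof that $Z(\End(Y))=K$.} You claim that the image of $\End(Y)\otimes_K\End(Y^*)$ in $\End(Y\otimes Y^*)\cong M_n(K)$ is all of $M_n(K)$. Your justification is that $\dim_K\End(Y^*)=n$ and that $z\mapsto z\otimes\id_{Y^*}$ and $w\mapsto \id_Y\otimes w$ are each injective. That does not follow. Two commuting $n$-dimensional subalgebras of $M_n(K)$ can generate a subalgebra of dimension as small as $n$. Reaching $n^2$ is exactly injectivity of $\End(Y)\otimes_K\End(Y^*)\to\End(Y\otimes Y^*)$. You planned to get that injectivity from simplicity of the source, which needs the very centrality you are proving, so the argument is circular.

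The circularity hides a real case. Take $K=\RR$, $\End(Y)\cong\CC$, $Y\otimes Y^*\cong 2\cdot\1$. On $\Hom(\1,Y\otimes Y^*)\cong\End(Y)\cong\CC$ the two factors act by left and right multiplication. Both images are the same copy of $\CC$ inside $M_2(\RR)$, and $i\otimes\id_{Y^*}$ commutes with everything in sight without being a scalar. Everything your argument uses is visible on this unit-isotypic piece, where the configuration is consistent. Excluding it is precisely what the paper's Corollary~\ref{cor:Complex simples are big} extracts from this lemma.

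\textbf{The missing ingredient} is the paper's starting point: the natural map $\End(A)\otimes_{\Omega\C}\End(B)\to\End(A\otimes B)$ is injective for \emph{all} objects $A,B$, with no simplicity hypothesis on the source. The paper only asserts this, but you can prove it with the duality you already use. We have $\End(A)\cong\Hom(A^*\otimes A,\1)$, $\End(B)\cong\Hom(\1,B\otimes B^*)$ and $\End(A\otimes B)\cong\Hom(A^*\otimes A,B\otimes B^*)$. Under these identifications $f\otimes g$ goes to the composite $A^*\otimes A\to\1\to B\otimes B^*$. The right $\Omega\C$-action on $\End(A)$ becomes postcomposition with $\End(\1)$, and the left action on $\End(B)$ becomes precomposition, so the balancing matches your conventions. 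Since $\1$ is simple with endomorphisms $\Omega\C$, the composition map $\Hom(M,\1)\otimes_{\Omega\C}\Hom(\1,N)\to\Hom(M,N)$ is the inclusion of the $\1$-isotypic block, hence injective.

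With this in hand, the count $n\cdot n=n^2$ gives $\End(Y)\otimes_{\Omega\C}\End(Y)^{op}\cong M_n(\Omega\C)$. That forces the center of $\End(Y)$ to be $\Omega\C$, since a larger center would split the left side into several blocks. The second statement then follows from injectivity plus your count $\dim\End(X\otimes Y)=n\dim\End(X)$. So the reduction to simple $X$ and the worry about simplicity of $\End(X)\otimes_K\End(Y)$ in the Galois-nontrivial case become unnecessary. Your second part is otherwise sound once central simplicity is known.
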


\begin{proof}
    For any object $X\in\C$, 
    \[\End(X\otimes Y)\cong\Hom(X,X\otimes Y\otimes Y^*)\cong\Hom(X,n\cdot X)\cong n\cdot\End(X)\,,\]
    as vector spaces over $\Omega\C$.
    In particular, setting $X=\1$ shows that $\End(Y)$ has dimension $n$ over $\Omega\C$.
    For any pair of objects $A,B\in\C$, there is an inclusion of algebras
    \[\End(A)\otimes_{\Omega\C}\End(B)\hookrightarrow\End(A\otimes B)\,.\]
    For $(A,B)=(Y,Y^*)$, we can use the algebra inclusion to find that
    \begin{gather*}
        \End(Y)\otimes_{\Omega\C}\End(Y)^{op}\cong\End(Y)\otimes_{\Omega\C}\End(Y^*)\\
        \hookrightarrow\End(Y\otimes Y^*)\cong\End(n\cdot\1)\cong M_n(\Omega\C)\,.
    \end{gather*}
    Comparing dimensions, we find that this must be an isomorphism of algebras.
    If $Z(\End(Y))$ were strictly larger than $\Omega\C$, then $\End(Y)\otimes_{\Omega\C}\End(Y)^{op}$ would decompose into multiple matrix blocks, from whence it follows that $\End(Y)$ must be central simple over $\Omega\C$.

    For $(A,B)=(X,Y)$, the inclusion
    \begin{gather*}
        \End(X)\otimes_{\Omega\C}\End(Y)\hookrightarrow\End(X\otimes Y)
    \end{gather*}
    must again be an isomorphism, for dimension reasons.
\end{proof}

\begin{corollary}\label{cor:Complex simples are big}
    If $\Omega\C=\RR$ and $X$ is a complex simple object, then $\FPdim(X)>\sqrt2$.
\end{corollary}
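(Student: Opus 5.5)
The plan is to argue by contradiction, using Lemma \ref{lem:Quasi-invertible rules} together with the arithmetic of $\CC\otimes_\RR\CC\cong\CC\oplus\CC$. Let $X$ be a complex simple object with $\Omega\C=\RR$, and suppose $\FPdim(X)\le\sqrt2$.

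\textbf{Step 1: $X$ is quasi-invertible.} Consider $X\otimes X^*$. It contains $\1$, and $\Hom(\1,X\otimes X^*)\cong\End(X)\cong\CC$. Since $\1$ is real, this means $\1$ appears with multiplicity $\dim_\RR\CC/\dim_\RR\End(\1)=2$. Hence $X\otimes X^*\supseteq 2\cdot\1$, so
\[\FPdim(X)^2=\FPdim(X\otimes X^*)\ge 2.\]
Under our assumption $\FPdim(X)\le\sqrt2$, equality must hold and $X\otimes X^*\cong 2\cdot\1$. Thus $X$ is quasi-invertible, with $n=2$.

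\textbf{Step 2: contradiction with centrality.} Lemma \ref{lem:Quasi-invertible rules} now says that $\End(X)\cong\CC$ is central simple over $\Omega\C=\RR$. But $\CC$ has center $\CC\neq\RR$, so it is not central over $\RR$. Concretely, the algebra map $\CC\otimes_\RR\CC^{op}\cong\CC\oplus\CC\to M_2(\RR)$ from the proof of the lemma cannot be an isomorphism, since $\CC\oplus\CC$ is not simple. This contradiction gives $\FPdim(X)>\sqrt2$.

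\textbf{Expected obstacle.} The only point needing care is the multiplicity count in Step 1. The multiplicity of a simple $S$ in an object is $\dim_\RR\Hom(S,-)/\dim_\RR\End(S)$. For $S=\1$ we have $\End(\1)=\RR$, so the $2$-dimensional space $\Hom(\1,X\otimes X^*)$ really gives two copies of $\1$. Once that count is in place, FP-dimension additivity and multiplicativity finish the argument.
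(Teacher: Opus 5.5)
Your proposal is correct and is essentially the paper's argument. The paper writes $X\otimes X^*\cong 2\cdot\1\oplus V$ and uses Lemma~\ref{lem:Quasi-invertible rules} to force $V\neq 0$, while you run the same reasoning by contradiction: $\FPdim(X)\le\sqrt2$ forces $V=0$, making $X$ quasi-invertible with non-central $\End(X)\cong\CC$. Your explicit multiplicity count for $\1$ and your appeal to positivity of $\FPdim$ on nonzero objects spell out steps the paper leaves implicit.
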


\begin{proof}
    Rigidity forces $X\otimes X^*\cong 2\cdot\1\oplus V$, for some object $V\in \C$.
    By Lemma \ref{lem:Quasi-invertible rules}, $V$ must be nonzero, and so $\FPdim(X)^2=2+\FPdim(V)>2$.
\end{proof}

The computation of Frobenius-Perron dimensions of objects remains the same as in the algebraically closed case (see \cite[Chapter 3]{MR3242743}).
However, the presence of non-split simple objects causes the Frobenius-Perron dimension \emph{of categories} to have a different formula.

\begin{definition}[{\cite[Definition 3.21]{MR4806973}}]
    For $\C$ a fusion category over a field $\KK$, with $\End(\1)=\Omega\C$, the Frobenius-Perron dimension of $\C$ is the quantity
    \begin{equation}\label{eqn:FPdim formula}
        \FPdim(\C)=\sum_{\substack{X\in\C,\\ X \text{ simple}}}\frac{\FPdim(X)^2}{\dim_{\Omega\C}\big(\End(X)\big)}\;.
    \end{equation}
\end{definition}

Similar denominators appear in Kevin Walker's universal state sum TQFT formulas \cite{walker2021universalstatesum}.

\begin{example}
    When $H$ is a semisimple Hopf algebra over a field $\KK$,
    \[\FPdim(\Rep_{\mathbb K}(H))=\dim_{\mathbb K}(H)\,.\]
\end{example}

\begin{example}
    The object $Y\in\sVec_\HH$ satisfies $Y\otimes Y\cong 4\cdot\1$, and thus has $\FPdim(Y)=2$.
    From this it follows that
    \[\FPdim(\sVec_\HH)=\frac{\FPdim(\1)^2}{\dim_{\Omega\C}(\Omega\C)}+\frac{\FPdim(Y)^2}{\dim_{\Omega\C}(\End(Y))}\;=\;2\,.\]
\end{example}

Note that the second summand in the above example was still an integer, despite the denominator.
This is actually a more general phenomenon that we will take advantage of when classifying centers in Section \ref{sec:Classification of centers}.

\begin{proposition}[{cf. \cite[Theorem 3.38]{MR4806973}}]\label{prop:Each simple contributes at least 1 to FPdim}
    Each summand
    \[\frac{\FPdim(X)^2}{\dim_{\Omega\C}(\End(X))}\]
    in Equation \ref{eqn:FPdim formula} is the Frobenius-Perron dimension of the internal object
    \[X^*\otimes_{\End(X)}X\,,\]
    and is therefore an algebraic integer that that is $\geq1$.
\end{proposition}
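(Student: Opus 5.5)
Let $D=\End(X)$, a division algebra over $\Omega\C$ (so $\RR$, $\CC$ or $\HH$ when $\Omega\C=\RR$), and put $d=\dim_{\Omega\C}D$. The plan is to realize $X^*\otimes_D X$ as an honest object of $\C$ and read off its Frobenius-Perron dimension.

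\textbf{Step 1: build the object.} Note that $D$ acts on $X$ on the left and hence, by duality, on $X^*$ on the right. Then $X^*\otimes_D X$ is the coequalizer of the two maps
\[X^*\otimes D\otimes X\rightrightarrows X^*\otimes X.\]
Here $X^*\otimes D$ means $X^*\otimes(D\cdot\1)$, using $D$ as an $\Omega\C$-vector space. Equivalently, $X^*\otimes_D X$ is the image of an idempotent on $X^*\otimes X$, obtained by averaging over a basis of $D$ with its reduced trace form; this averaging works because $D$ is separable over $\Omega\C$ in characteristic zero. Since $\C$ is semisimple and idempotent complete, this coequalizer exists.

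\textbf{Step 2: compute its dimension.} Consider the functor $\C\to\Mod_D$ given by $Y\mapsto\Hom(X,Y)$. It sends $X$ to the free module $D$. The key claim is
\[X^*\otimes X\cong X^*\otimes_D(D\otimes_{\Omega\C}D)\otimes_D X,\]
so that tensoring over $D$ divides the multiplicity of each simple summand by $d$. Concretely, I would use rigidity to write
\[\Hom(X^*\otimes_D X,\,Z)\cong\Hom_D(X,\,X\otimes Z),\]
the $D$-linear maps. Comparing with
\[\Hom(X^*\otimes X,\,Z)\cong\Hom_{\Omega\C}(X,\,X\otimes Z),\]
and using that $\Hom(X,\,X\otimes Z)$ is a free $D$-module, every multiplicity of $X^*\otimes X$ is $d$ times the corresponding multiplicity of $X^*\otimes_D X$, measured in the appropriate units. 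Because $\FPdim$ is additive and multiplicative, this gives
\[d\cdot\FPdim(X^*\otimes_D X)=\FPdim(X^*\otimes X)=\FPdim(X)^2.\]
I expect this bookkeeping to be the main obstacle. Multiplicities of a simple $Z$ must be counted as $\dim\Hom/\dim\End(Z)$, and one has to check that the $D$-module structure on $\Hom(X,Y)$, for $Y$ simple, is compatible with this. The cleanest check is to verify the identity on the summand $\1$, where the map $X^*\otimes X\to\1$ factors through $X^*\otimes_D X$ with multiplicity exactly one, and then argue summand-by-summand via freeness. Freeness holds because $D$ is a division algebra, so every $D$-module is free.

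\textbf{Step 3: conclude.} The object $X^*\otimes_D X$ is nonzero because it has $\1$ as a summand: the evaluation map factors through it. So its Frobenius-Perron dimension is at least $\FPdim(\1)=1$. It is also an algebraic integer, as the Frobenius-Perron dimension of an object in a fusion category, namely an eigenvalue of an integer matrix. This gives both claims.
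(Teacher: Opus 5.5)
\textbf{There is a genuine gap in Step 2.} Your Steps 1 and 3 are fine. Step 2, however, only works when $D=\End(X)$ is central over $\Omega\C$, and it fails for a complex simple when $\Omega\C=\RR$.

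The quantity you need is $\dim\Hom_D(X,X\otimes Z)$, the $D$-balanced part of $\Hom(X,X\otimes Z)$. This depends on the \emph{bimodule} structure: $D$ acts on the left via $f\mapsto f\otimes\id_Z$ and on the right by precomposition. So it depends on $\Hom(X,X\otimes Z)$ as a $D\otimes_{\Omega\C}D^{op}$-module, and one-sided freeness over $D$ says nothing about it.

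If $D$ is central simple over $\Omega\C$, your bookkeeping is correct. This covers $D=\RR$ or $\HH$ with $\Omega\C=\RR$, and $D=\Omega\C$, which is automatic when $\Omega\C\cong\CC$. In that case $D\otimes_{\Omega\C}D^{op}\cong M_d(\Omega\C)$, every bimodule is a sum of copies of $D$ with balanced part $Z(D)=\Omega\C$, and dividing by $d$ is right.

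For $D=\CC\supsetneq\RR=\Omega\C$ it is not. As bimodules, $D\otimes_\RR D\cong D\oplus D^{\sigma}$, where $D^{\sigma}$ has its right action twisted by complex conjugation $\sigma$; it is not $D\oplus D$. So your ``key claim'' actually gives $X^*\otimes X\cong(X^*\otimes_D X)\oplus(X^*\otimes_D{}^{\sigma}\!X)$, where ${}^{\sigma}\!X$ is $X$ with $D$ acting through $\sigma$. The multiplicities are not divided uniformly.

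Here is a concrete case. In $\Rep_\RR(\ZZ/3\ZZ)$, let $V$ be the two-dimensional rotation representation, so $\End(V)=\CC$ and $\FPdim(V)=2$. Then $V^*\otimes V\cong 2\cdot\1\oplus V$, but $V^*\otimes_\CC V\cong 2\cdot\1$. The multiplicity of $\1$ does not drop at all, while that of $V$ drops to zero. Your proposed check that $\1$ appears exactly once is also false here, since $\Hom(X^*\otimes_D X,\1)\cong\Hom_D(X,X)=Z(D)$ is two-dimensional. The total $2=4/2$ is still correct, but your argument does not explain why.

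\textbf{How to close it.} You must show that $X^*\otimes_D X$ and $X^*\otimes_D{}^{\sigma}\!X$ have the same Frobenius-Perron dimension. The natural tool is base change to $\CC$, which preserves $\FPdim$ of objects because it is a tensor functor.
\begin{itemize}
\item One has $X\boxtimes\CC\cong X_1\oplus X_2$ with $X_1\not\cong X_2$.
\item The conjugate-linear monoidal autoequivalence of $\overline{\C}$ coming from the real structure swaps $X_1$ and $X_2$, so $\FPdim(X_1)=\FPdim(X_2)=\FPdim(X)/2$.
\item $D\otimes_\RR\CC\cong\CC\times\CC$ acts on $X_1\oplus X_2$ through its two factors. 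Hence $\overline{X^*\otimes_D X}\cong X_1^*\otimes X_1\oplus X_2^*\otimes X_2$.
\item Its dimension is $2\cdot\FPdim(X)^2/4=\FPdim(X)^2/2$.
\end{itemize}
The same computation, with $D\otimes_\RR\CC$ a product of matrix algebras, handles real, complex and quaternionic simples uniformly. Once $\FPdim(X^*\otimes_D X)=\FPdim(X)^2/d$ is established this way, your Step 3 finishes the proof as written.
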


\begin{example}\label{eg:Complex simples contribute more}
    When $\Omega\C=\RR$ and $X$ is a complex simple, Corollary \ref{cor:Complex simples are big} implies that
    \[\frac{\FPdim(X)^2}{\dim_{\Omega\C}(\End(X))}>\frac{2}{2}\;=\;1\,\]
    and so $X$ must contribute strictly more than 1 to $\FPdim(\C)$.
\end{example}

\subsection{Galois descent}

Here we give the basics of categorical descent as outlined by Etingof and Gelaki in \cite{MR2946231}.
This theory involves maps between higher groupoids attached to fusion categories such as $\Pic(\C)$ and $\Aut_{br}(\C)$, as well as their classifying spaces.
A good reference for such constructions is \cite{MR4281262}.

Given a real fusion category $\C$, we can extend scalars to $\CC$ by taking the Deligne product $\overline\C:=\Vec_\CC\boxtimes_\RR\C$.
When $\Omega\C=\RR$, $\overline\C$ will be fusion over $\CC$.
If $\Omega\C=\CC$, then $\overline\C$ will be multifusion.
If $\C$ had a braiding, then this will extend to a braiding on $\overline\C$.
Additionally, extension of scalars commutes with the operation of taking centers:
\[\Z(\overline\C)\simeq\overline{\Z(\C)}\,.\]

Many questions about $\C$ can be answered by checking properties of $\overline\C$, and vice versa.
For example, in the case where $\Omega\C=\CC$, $\overline\C$ is indecomposable if and only if $\C$ has Galois nontrivial objects, and moreover this is equivalent to $\Omega\Z\C=\RR$.

If $\D$ is fusion over $\CC$, then a fusion category $\C$ over $\RR$ is said to be a real form of $\D$ if $\overline\C\simeq\D$.
The data of such an equivalence is called a framing.
Two real forms are equivalent if they are equivalent as real categories.
An equivalence that also respects the framings is called a framed equivalence.
The braided versions follow the same terminology.

The categorical descent theory of \cite{MR2946231} tells us that that framed real forms of $\D$ are in bijection with homotopy classes of continuous maps
    \[B\ZZ/2\ZZ\to B\Aut_{\RR,\otimes}(\D)\footnote{Etingof and Gelaki prefer to think of `Galois-twisted' autoequivalences (cf. \cite[Section 3.2]{MR2946231}), but this is only a matter of language.  Similar to saying quivers instead of directed graphs, or copresheaves instead of functors, the choice is a matter of perspective, and not of mathematical distinction.}\,,\]
with the property that the generator of $\ZZ/2\ZZ$ acts by some monoidal functor $(T,J)$ that performs complex conjugation on $\Omega\D$, and where $\Aut_{\RR}(\D)$ is the categorical group of $\RR$-linear autoequivalences of $\D$.
Braided real forms follow the same pattern, with braided autoequivalences $\Aut_{\RR,br}(\D)$ replacing the target.

\begin{example}\label{eg:Semion has no braided real forms}
    The semion category $\Sem$ is the braided category $\Vec_{\CC}^{(\omega,\beta)}(\ZZ/2\ZZ)=\langle\1, \sigma\rangle_{\oplus}$, with associator and braiding determined by the formulas 
    \begin{gather*}
        \Big(\omega_{\sigma,\sigma,\sigma}:(\sigma\otimes\sigma)\otimes\sigma\to\sigma(\sigma\otimes\sigma)\Big)=-\id_\sigma\,,\text{ and}\\
        \Big(\beta_{\sigma,\sigma}:\sigma\otimes\sigma\to\sigma\otimes\sigma\Big)=i\cdot\id_\1.
    \end{gather*}
    Any monoidal autoequivalence $T$ of $\Sem$ must fix all objects, and the monoidal structure $J$ can be assumed to be trivial, because $H^2(\ZZ/2\ZZ;\CC^\times)=0$.
    The presence of $i$ in the formula for the braiding implies that any autoequivalence that performs complex conjugation would not be braided, so $\Sem$ has no braided real forms.
\end{example}

The process of finding a continuous map that determines a real form can be carried out in steps.
First we choose a functor $(T,J)$, that acts by complex conjugation and where $(T,J)^2$ is isomorphic to the identity.
Next we choose such a natural isomorphism $\mu:(T,J)^2\to\id_\D$.
This determines an obstruction cocycle in $Z^3(\ZZ/2\ZZ;\Aut_{\otimes}(\id_\D))$, where the coefficient module $\Aut_{\otimes}(\id_\D)$ has a twisted action coming from conjugation by $T$.
The homology class of this cocycle is denoted $O_3(T)$, because it depends on $T$ but not $\mu$.
If the obstruction class is zero, then a real form does exist, and the framed real forms built out of $(T,J)$ are determined by the different choices for $\mu$, and are thus in bijection with $H^2(\ZZ/2\ZZ;\Aut_{\otimes}(\id_\D))$.
Explicitly, the real form can described as the equivariantization $\C^{\ZZ/2\ZZ}$ with respect to the action of $\ZZ/2\ZZ=\mathrm{Gal}(\CC/\RR)$ given by $(T,J,\mu)$ (see \cite[Definition 2.7.2]{MR3242743}).

\begin{remark}
    Typically there is no preferred isomorphism with $H^2$.
    However, if a preferred real form $\C$ exists, then we can identify that with $0\in H^2(\ZZ/2\ZZ;\Aut_{\otimes}(\id_\D))$, and compare all other options for $\mu$ relative to this preferred choice $\mu_\C$.
\end{remark}

\section{Classification of real nondegenerate categories of \texorpdfstring{$\FPdim=$}{dimension} 4}\label{sec:Classification of real ndBFCS of dim=4}

Since $\FPdim(\C)=\FPdim(\overline\C)$, and our MNEs must have $\FPdim=4$, all of our MNEs will complexify to become nondegenerate braided categories of dimension 4.
This means that we will need to have a working knowledge of various complex fusion categories with these properties, as well as notation for their real forms.
The property of having a real form will turn out to be very restrictive, and this will allow us to classify all possibilities.

A fusion category is said to be \emph{pointed} if all of its simple objects are invertible.
Pointed braided fusion categories over $\CC$ are classified by pre-metric groups, that is, finite abelian groups $A$, equipped with a quadratic form $q:A\to\CC^\times$.
A pre-metric group is called a metric group when the quadratic form is nondegenerate.
As mentioned in the introduction, this classification goes back to Eilenberg and Mac Lane \cite{MR65162}, but a convenient reference for this theory can be found in \cite[Appendix A]{MR2609644}.

\begin{theorem}[{cf. \cite[Proposition 2.41]{MR2609644}}]\label{thm:Equivalence of pre-metric groups and PBFCs}
    There is an equivalence of categories between pre-metric groups, and the 1-categorical truncation of the 2-category of pointed braided fusion categories, braided functors, and monoidal natural transformations.
\end{theorem}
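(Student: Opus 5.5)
The plan is to build, in both directions, the correspondence between pointed braided fusion categories and pre-metric groups, and then check that it is full, faithful and essentially surjective at the level of the $1$-truncation. This is the classical Eilenberg--Mac Lane correspondence. For $\C$ pointed braided over $\CC$, let $A=\mathrm{Inv}(\C)$ be the group of isomorphism classes of simple (hence invertible) objects; it is abelian because of the braiding. For $x\in A$ define $q(x)=c_{X,X}\in\Aut(X\otimes X)\cong\CC^\times$, where $X$ is any representative of $x$. Scalars of this form do not depend on the choice of representative, since isomorphisms conjugate $c_{X,X}$ and automorphisms of $X\otimes X$ are scalars. Naturality of the braiding together with the hexagon axioms shows that $q(x^{-1})=q(x)$. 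The hexagons also show that $b(x,y)=q(xy)/(q(x)q(y))=c_{Y,X}c_{X,Y}$ is a bicharacter, so $q$ is a quadratic form. A braided functor preserves $c_{X,X}$, so it induces a homomorphism of pre-metric groups. Monoidal natural isomorphisms between braided functors induce the same map on isomorphism classes, so the assignment factors through the $1$-truncation.

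For essential surjectivity and full faithfulness I would use the skeletal description. Up to braided equivalence, a pointed braided fusion category is $\Vec^{(\omega,c)}_\CC(A)$, where $(\omega,c)$ is an abelian $3$-cocycle in $Z^3_{ab}(A;\CC^\times)$. Braided equivalences that are the identity on $A$ change $(\omega,c)$ by an abelian coboundary. Hence braided categories with underlying group $A$, up to such equivalences, are classified by $H^3_{ab}(A;\CC^\times)$. The key input is Eilenberg--Mac Lane's theorem that the trace map
\[
H^3_{ab}(A;\CC^\times)\to\mathrm{Quad}(A;\CC^\times),\qquad (\omega,c)\mapsto\big(x\mapsto c(x,x)\big),
\]
is an isomorphism. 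This gives essential surjectivity: every pre-metric group $(A,q)$ is realised. It also gives fullness, because a homomorphism $f:(A,q)\to(A',q')$ pulls back the class of $(\omega',c')$ to a class with trace $q'\circ f=q$. That class therefore agrees with $[(\omega,c)]$, and a coboundary witnessing the equality provides the tensor and braided structure on the functor lifting $f$.

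For faithfulness, suppose two braided functors $F,G$ induce the same map $f$ on $A$. Then they differ by a $2$-cochain $\eta$ on $A$ whose abelian coboundary vanishes, i.e.\ an abelian $2$-cocycle. We have $H^2_{ab}(A;\CC^\times)\cong\Hom(\Lambda^2\text{-type data})=0$: an abelian $2$-cocycle is a symmetric $2$-cocycle, and symmetric cocycles with divisible coefficients are coboundaries because $\mathrm{Ext}(A,\CC^\times)=0$. So $\eta$ is a coboundary $\partial\lambda$, and $\lambda$ defines a monoidal natural isomorphism $F\Rightarrow G$. Thus the two functors become equal in the $1$-truncation.

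The main obstacle is the Eilenberg--Mac Lane isomorphism $H^3_{ab}(A;\CC^\times)\cong\mathrm{Quad}(A;\CC^\times)$. Injectivity and surjectivity of the trace are genuinely nontrivial. Since the paper cites the result, I would simply invoke it, via \cite{MR65162} or \cite[Appendix A]{MR2609644}. If a self-contained argument were needed, I would first reduce to cyclic groups, using that both sides take direct sums to the appropriate products, with a cross term given by bicharacters that is matched on both sides. I would then write down explicit cocycles for $\ZZ/n$: take $c(a,b)=\zeta^{ab}$, together with the associator $\omega$ supported on carries when $n$ is even.
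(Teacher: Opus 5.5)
Your proposal is correct and is the standard abelian-cohomology argument behind the result, which the paper does not prove but simply cites from \cite{MR2609644}: Eilenberg--Mac Lane's isomorphism $H^3_{ab}(A;\CC^\times)\cong\mathrm{Quad}(A;\CC^\times)$ gives essential surjectivity and fullness, and vanishing of abelian $H^2$ gives faithfulness, which is exactly the $H^2_{br}=0$ fact the paper later invokes in its footnote. One small correction: $H^2_{ab}(A;\CC^\times)$ is $\mathrm{Ext}^1_{\ZZ}(A,\CC^\times)$ (symmetric $2$-cocycles modulo coboundaries), not a $\Hom$ out of ``$\Lambda^2$-type data'', although the justification you give in the next clause, $\mathrm{Ext}(A,\CC^\times)=0$ by divisibility, is the right one.
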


Concretely, this means that pointed braided fusion categories are essentially just pre-metric groups, and all braided functors are determined up to isomorphism by their underlying map of of pre-metric groups.
We will denote the pointed braided fusion category associated to $(A,q)$, by the notation $\C(A,q)$.
The braiding on $\C(A,q)$ is nondegenerate if and only if the quadratic form $q$ is nondegenerate.

\begin{definition}\label{def:Gauss sum and central charge}
    The Gauss sum $\tau(A,q)$ of a pre-metric group $(A,q)$ is the complex scalar
    \[\tau(A,q)=\sum_{a\in A}q(a)\;.\]
    If $\tau(A,q)\neq0$, the central charge is the normalized Gauss sum
    \[c(A,q):=\frac{\tau(A,q)}{|\tau(A,q)|}\;.\]
    If $\C=\C(A,q)$, then we will use the shorthand $\tau(\C):=\tau(A,q)$ and $c(\C):=c(A,q)$.
\end{definition}
The Gauss sum and central charge are multiplicative with respect to $\boxtimes$, and $\tau(\C)=0$ if and only if $\C$ is degenerate.

The following three examples will play an important role in our story.

\begin{example}\label{eg:Introducing TC}
    The toric code is the braided category $\TC=\Z(\Vec_{\CC}(\ZZ/2\ZZ))$.
    This category is pointed braided, of the form $\C((\ZZ/2\ZZ)^2,q)$, where $q(a)=-1$ for exactly one $a\in(\ZZ/2\ZZ)^2$, and $q(x)=1$ for the other three elements.
    The Gauss sum is $\tau(\TC)=2$.
\end{example}

\begin{example}\label{eg:Introducing DS}
    The category $\Z(\Vec_\CC^\omega(\ZZ/2\ZZ))$ is equivalent to $\Sem\boxtimes\Sem^{rev}$ (see Example \ref{eg:Semion has no braided real forms}), and is thus referred to as the double-semion category.
    We will denote this category by $\DS$.
    This category is pointed, of the form $\C((\ZZ/2\ZZ)^2,q)$, where $q(\sigma)=i$, $q(\overline\sigma)=-i$, and $q(\sigma\overline\sigma)=1$.
    The Gauss sum is $\tau(\DS)=2$.
\end{example}

\begin{example}\label{eg:Introducing TF}
    The three fermion category $\TF$ is $\C((\ZZ/2\ZZ)^2,q)$, where $q(a)=-1$ for all three nontrivial elements of $(\ZZ/2\ZZ)^2$.
    The Gauss sum is $\tau(\TF)=-2$.
\end{example}

Before continuing on, let's back up a bit and clarify some terminology from physics.

\begin{definition}
    For a simple object $X$, the behavior of its self-braiding $\beta_{X,X}:X\otimes X\to X\otimes X$ is called its exchange statistics.
    The self braiding can be complicated in general, but when it is a scalar multiple of the identity, there are special names.
    When $\beta_{X,X}=\id_{X\otimes X}$, we say that $X$ has bosonic exchange statistics, or that $X$ is a boson.
    When $\beta_{X,X}=-\id_{X\otimes X}$, we say that $X$ has fermionic exchange statistics, or that $X$ is a fermion.
    The name semion is meant to evoke the idea of having exchange statistics that are `halfway in between' $1$ and $-1$.
    More generally, if $\beta_{X,X}=\lambda\cdot\id_{X\otimes X}$, where $\lambda$ is some other complex phase, $X$ is called an anyon.
    
    For our purposes, real or quaternionic simple objects $X$ will be called fermions or bosons based on the exchange statistics of the unique simple object inside of $X\boxtimes\CC$.
\end{definition}

\begin{example}\leavevmode
    \begin{itemize}
        \item The toric code $\TC$ has three bosons and one fermion.
        \item The double semion category $\DS$ has two bosons and two semions.
        \item The three fermion category $\TF$ has (unsurprisingly) three fermions and one boson.
        \item The object $Y$ in $\sVec_\HH$ is a quaternionic fermion.
    \end{itemize}
\end{example}

When complex scalars are conjugated, the category $\C(A,q)$ becomes $\C(A,\overline q)$.
If $\C(A,q)\simeq\overline\C$ for some braided real fusion category $\C$, then $\C(A,q)$ admits a braided equivalence to its complex conjugate.
By Theorem \ref{thm:Equivalence of pre-metric groups and PBFCs}, such an equivalence must come from an isomorphism $(A,q)\to(A,\overline q)$ of pre-metric groups.
Since $\tau(A,\overline q)=\overline{\tau(A,q)}$, it follows that whenever $\C(A,q)$ admits a real form, $\tau(A,q)\in\RR$.

Fusion categories over $\CC$ of $\FPdim=4$ that are not pointed are called \emph{Ising} categories.
Up to monoidal equivalence, there are only two such categories, and they have $\ZZ/2\ZZ$ Tambara-Yamagami fusion rules.
All braidings on Ising categories are nondegenerate, and none of them admit real forms.
Thus, if a nondegenerately braided category of $\FPdim=4$ admits a real form, it must be pointed.

\begin{proposition}\label{prop:The only possible complexifications}
    Suppose $\C$ is a nondegenerately braided fusion category over $\RR$, with $\FPdim(\C)=4$.
    The complexification $\overline\C$ of $\C$ is pointed, and $\tau(\overline\C)$ is real.
    If $\tau(\overline\C)>0$, then $\overline\C$ is a Drinfeld center, either $\TC$ or $\DS$.
    If $\tau(\overline\C)<0$, then $\overline\C$ is $\TF$.
\end{proposition}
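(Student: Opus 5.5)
The plan is to pass to the complexification and then classify there. Since $\FPdim(\overline\C)=\FPdim(\C)=4$ and complexification preserves nondegeneracy of the braiding, $\overline\C$ is a nondegenerately braided fusion category over $\CC$ of dimension $4$. I would first check that $\overline\C$ is genuinely fusion and not merely multifusion. Nondegeneracy of $\C$ should force $\Omega\C=\RR$: the unit $\1$ is transparent, and the Müger center of a nondegenerate category is $\Vec$, so $\End(\1)$ must be the base field. Consequently $\overline\C$ is fusion.

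By the discussion immediately preceding the statement, the fusion categories of dimension $4$ over $\CC$ are either pointed or Ising. Ising categories have no real forms, so $\overline\C$ is pointed, say $\overline\C\simeq\C(A,q)$ with $|A|=4$ and $q$ nondegenerate. We have also already seen that the existence of a braided real form yields an isomorphism $(A,q)\cong(A,\overline q)$, and hence that $\tau(A,q)\in\RR$.

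The remaining step is a finite enumeration of nondegenerate quadratic forms on groups of order $4$, retaining only those with real Gauss sum.
\begin{itemize}
\item For $A=\ZZ/4\ZZ$, the nondegenerate forms are $q(x)=\zeta^{x^2}$ with $\zeta$ a primitive $8$th root of unity. Their Gauss sums are $1+\zeta+\zeta^4+\zeta^9=2\zeta$, which is never real. This case is therefore excluded.
\item For $A=(\ZZ/2\ZZ)^2$, the values of $q$ are fourth roots of unity. The metric groups here are, up to isomorphism, products of two rank-one forms with $q=\pm i$, together with the hyperbolic form and $\TF$. The forms $\Sem\boxtimes\Sem$ and $\Sem^{rev}\boxtimes\Sem^{rev}$ have Gauss sums $(1\pm i)^2=\pm2i$, so they are not real. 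The form $\Sem\boxtimes\Sem^{rev}=\DS$ has $\tau=2$. The hyperbolic form $\TC$ has $\tau=2$. The form $\TF$, with $q=-1$ on all nontrivial elements, has $\tau=-2$.
\end{itemize}
As a check that the list for $(\ZZ/2\ZZ)^2$ is complete, note that nondegeneracy requires the associated bilinear form to be the nondegenerate alternating or symmetric form on $\mathbb F_2^2$. One then enumerates the possible values of $q$ on the three nonzero elements, subject to $b(x,y)=q(x+y)/(q(x)q(y))$ being nondegenerate.

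Reading off the signs completes the argument. If $\tau>0$, then $\overline\C$ is $\TC$ or $\DS$, and both are Drinfeld centers by Examples \ref{eg:Introducing TC} and \ref{eg:Introducing DS}. If $\tau<0$, then $\overline\C=\TF$.

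The main obstacle I expect is making the enumeration on $(\ZZ/2\ZZ)^2$ airtight, in particular ruling out any further nondegenerate forms with mixed values. I would handle it by the multiplicativity of $\tau$ together with the decomposition of metric $2$-groups of rank $2$ into orthogonal sums, using the classification in \cite[Appendix A]{MR2609644}. A lesser point is justifying $\Omega\C=\RR$; if that proves delicate, I would instead argue that a multifusion $\overline\C$ cannot carry a nondegenerate braiding with simple unit summands.
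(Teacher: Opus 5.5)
Your argument is correct. For $\tau<0$ it is essentially the paper's enumeration, but for $\tau>0$ you take a different route.

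The paper does not classify the forms with positive Gauss sum. It invokes \cite[Proposition A.7]{MR2609644} to obtain a Lagrangian subgroup, and concludes that $\overline\C\simeq\Z(\D)$ for some $\D$ with $\FPdim(\D)=2$. It then reads off $\TC$ or $\DS$ from the only two possibilities, $\Vec_\CC(\ZZ/2\ZZ)$ and $\Vec_\CC^\omega(\ZZ/2\ZZ)$. Its explicit case analysis is reserved for $\tau<0$, and there it only checks that the $\ZZ/4\ZZ$ forms never give a \emph{negative} sum.

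You instead enumerate all nondegenerate quadratic forms on groups of order $4$. You keep those with real Gauss sum and identify $\TC$ and $\DS$ as centers by matching with the paper's examples.

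What each route buys:
\begin{itemize}
\item Yours is fully elementary and self-contained, and avoids the Lagrangian criterion. It also yields slightly more: the complete list of metric groups of order $4$, and the sharper fact that the $\ZZ/4\ZZ$ forms have $\tau=2\zeta$, which is never real.
\item The paper's is shorter, and it produces the Drinfeld-center structure conceptually rather than by comparison with known examples.
\end{itemize}

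One small remark concerns your derivation of $\Omega\C=\RR$ from nondegeneracy; it is really a matter of convention. Under the paper's literal definition (``$\1$ is the only transparent simple''), a complex nondegenerate category regarded over $\RR$ would have $\Omega\C=\CC$. So this step is better stated as a standing assumption, equivalently that the M\"uger center is $\Vec_\RR$. The paper makes the same assumption tacitly.
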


\begin{proof}
    The first two statements are just a rephrasing of the previous discussion, so it will suffice to prove the other two claims.
    
    If $\tau(\overline\C)>0$, then by \cite[Proposition A.7]{MR2609644}, $\overline\C$ has a Lagrangian subgroup, which implies that $\overline\C$ is a Drinfeld center.
    Any complex category $\D$ for which $\Z(\D)\simeq\overline\C$ must have $\FPdim(\D)=2$, which forces $\D$ to be either $\Vec_\CC(\ZZ/2\ZZ)$ or $\Vec_\CC^\omega(\ZZ/2\ZZ)$.
    The first corresponds to $\TC$, and the second to $\DS$.

    Suppose that $\tau(\overline\C)<0$.
    If $A=\ZZ/4\ZZ=\langle x\rangle$, then $q(x)^2=b(x,x)$ must be a 4th root of unity, and so $q(x^k)=\xi^{k^2}$ for some 8th root of unity $\xi$.
    The Gauss sum is then $\tau(\overline\C)=1+\xi+\xi^4+\xi$, but this can never be negative.
    If $A=(\ZZ/2\ZZ)^2=\langle x,y\rangle$, then $q(x)^2=b(x,x)=\pm1$, and the same for $y$.
    Choosing $b(x,y)=\pm1$, $q(x)=i^j$ and $q(y)=i^k$ gives $q(x+y)=b(x,y)q(x)q(y)=\pm i^{j+k}$.
    The Gauss sum is then $\tau(\overline\C)=1+i^j+i^k\pm i^{j+k}$.
    The only way for this to be negative is if $j=k=2$ and $b(x,y)=-1$.
    This corresponds to $\TF$, and the proof is complete.
\end{proof}

Since there are so few of these categories, let us proceed to classify all of them.
We will treat $\TC$ and $\TF$ simultaneously, as the proofs are very similar.

\begin{proposition}\label{prop:Classification of real forms of TC and 3F}
    The category $\TC$ admits four real forms, which we will denote by $\TC_\RR$, $\TC_\HH^f$, $\TC_\HH^b$, and $\TC_{\Tri}$.
    The category $\TF$ admits three real forms, which we will denote by $\TF_\RR$, $\TF_\HH$, $\TF_{\Tri}$.
    
\end{proposition}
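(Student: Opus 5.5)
We will use the Galois descent machinery of Etingof--Gelaki recalled above, applied to braided autoequivalences. By Theorem \ref{thm:Equivalence of pre-metric groups and PBFCs}, a braided $\RR$-linear autoequivalence of $\D=\C((\ZZ/2\ZZ)^2,q)$ that conjugates scalars is determined, up to isomorphism, by an isomorphism of pre-metric groups $\phi:(A,q)\to(A,\overline q)$. Since $q$ is real-valued for both $\TC$ and $\TF$, we have $\overline q=q$, so the candidate functors $T$ are indexed by $O(A,q)$. For $\TC$ this is the $\ZZ/2\ZZ$ swapping the two nontrivial bosons $e,m$. For $\TF$ it is $S_3$, permuting the three fermions.

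The first step is to fix the conjugacy class of $\phi$ with $\phi^2=\id$, since $T^2\cong\id$ forces this. For $\TC$ the choices are $\phi=\id$ or the $e\leftrightarrow m$ swap. For $\TF$, up to conjugation, they are $\phi=\id$ or a transposition; conjugate choices give equivalent real forms. Next, for each such $T$ we compute the obstruction $O_3(T)\in H^3(\ZZ/2\ZZ;\Aut_\otimes(\id_\D))$. Here $\Aut_\otimes(\id_\D)\cong\widehat A\cong(\ZZ/2\ZZ)^2$, with the Galois generator acting through $\phi$. We then classify the lifts by $H^2(\ZZ/2\ZZ;\widehat A)$, which is Tate cohomology. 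When $\phi=\id$, the action is trivial, so $\hat H^2=(\ZZ/2\ZZ)^2$ and we get up to four framed forms. When $\phi$ is a swap, $\widehat A$ is the regular permutation module, so $\hat H^2=\hat H^3=0$: the obstruction vanishes automatically and there is exactly one form. That form is the one in which the swapped pair fuses into a single complex simple, since the object $e\oplus m$ descends. We call it $\TC_{\Tri}$, respectively $\TF_{\Tri}$.

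In the case $\phi=\id$, the obstruction vanishes because the evident split forms $\TC_\RR=\Z(\Vec_\RR(\ZZ/2\ZZ))$ and $\TF_\RR$, obtained from real-valued data, exist. The four choices of $\mu$ are then the characters $\chi\in\widehat A$, acting as $T^2\cong\id$ twisted by $\chi$. In the resulting equivariantization, a simple $a$ becomes quaternionic exactly when $\chi(a)=-1$ and stays real otherwise. So the forms are labelled by which simples are quaternionic, and the quaternionic simples form the complement of the kernel of $\chi$. For $\chi\neq1$, this is exactly two nontrivial simples.

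The main obstacle is the final step: deciding which framed forms are equivalent as unframed real braided categories. Two framed forms are unframed-equivalent exactly when they differ by the action of the braided autoequivalences of $\D$, that is, $O(A,q)$ acting on $\chi$. A subtlety is that $\chi$ is a torsor choice relative to $\TC_\RR$, so we must check that this action is the naive one, which we expect to verify using the fact that $\TC_\RR$ is $O(A,q)$-invariant.

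For $\TC$, $O(A,q)=\ZZ/2\ZZ$ swaps $e$ and $m$ and fixes $f$. The three nontrivial characters therefore fall into two orbits. One is $\chi$ with kernel $\{1,f\}$, where $e$ and $m$ are quaternionic and the fermion is real; we call this $\TC_\HH^b$. The other orbit has kernel $\{1,e\}$ or $\{1,m\}$, so the fermion and one boson are quaternionic; we call this $\TC_\HH^f$. Together with $\TC_\RR$ and $\TC_{\Tri}$, this gives four forms.

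For $\TF$, $S_3$ acts transitively on the nontrivial characters, giving a single quaternionic form $\TF_\HH$. Together with $\TF_\RR$ and $\TF_{\Tri}$, this gives three forms.

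Finally, the listed forms must be pairwise inequivalent, and this follows from invariants. The number of complex simples separates them, as does which simples are quaternionic together with their exchange statistics. Over $\RR$, braided equivalences preserve $\End(X)$ and the self-braiding, so these invariants are well defined.
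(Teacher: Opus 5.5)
Your proposal is correct, and its skeleton matches the paper's. You use Galois descent, split according to whether $T$ fixes objects or swaps a pair, take the torsor over $H^2(\ZZ/2\ZZ;(\ZZ/2\ZZ)^2)$ of choices of $\mu$ over complex conjugation, and pass to orbits under braided autoequivalences.

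The real difference is the swap case. The paper uses the acyclicity of $\Aut_\otimes(\id_\D)$ under the swap only to get uniqueness. It gets existence by citing \cite{MR4934638} for $\TC$, and for $\TF$ by writing down $J$ and $\mu$ explicitly and checking $O_3=1$. You observe that $\widehat A\cong\ZZ/2\ZZ[\ZZ/2\ZZ]$ is an induced module, so $H^3$ vanishes as well and existence is automatic. This is shorter and treats $\TC$ and $\TF$ uniformly.

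What the explicit route buys, at least for $\TF$, is the value $\mu(ab)=-1$. This shows that the $T$-fixed simple becomes quaternionic. Your argument never decides whether that simple ($ab$, resp.\ the fermion of $\TC$) is real or quaternionic in the swap form. This does not affect the count, since your inequivalence argument only uses the complex simple. It is, however, what justifies the label ``triumvirate'' and the fact that $\TC_{\Tri}$ and $\TF_{\Tri}$ contain $\sVec_\HH$, which the paper relies on later. To recover it in your framework you would still have to exhibit a descent datum and evaluate it on that simple.

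Conversely, you make explicit two steps the paper asserts without comment. The first is pairwise inequivalence via invariants. The second is the compatibility of the $O(A,q)$-action with the torsor structure, which you leave as an expectation. That one closes at once. Each element of $O(A,q)$ is realized by a braided autoequivalence with $\pm1$ coefficients: for $\TC$ the swap $(T,K)$ in the proof of Lemma \ref{lem:Don't worry about the embeddings}, and for $\TF$ the swap with tensorator $(-1)^{jk}$. Such a functor commutes on the nose with complex conjugation and fixes the base point $\mu_0$ of $\TC_\RR$ (resp.\ $\TF_\RR$). It therefore acts on $\chi$ simply by precomposition, as you assumed.
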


Here, the subscript $\RR$ means that all simple objects are real, and the subscript $\HH$ indicates that there are two quaternionic objects.
The superscript $f$ indicates that \emph{one of} the quaternionic objects is a fermion, while the superscript $b$ indicates that \emph{both} quaternionic objects are bosons.
The subscript $\Tri$ indicates that there is exactly one real, one quaternionic, and one complex simple object, so will call these categories triumvirates.

\begin{proof}
    Both $\TC$ and $\TF$ admit a skeletal description (gauge) where all associator coefficients are trivial, and where all braiding coefficients are $\pm1$.
    This implies that the tensorator $J$ of the functor $T$ is classified by an element of (a torsor of) $H^2_{br}((\ZZ/2\ZZ)^2;\CC^\times)=0$ (here the coefficients are untwisted).
    It follows that $J$ is unique up to monoidal equivalence, when it exists\footnote{The reader should think of this observation as an invocation of Theorem \ref{thm:Equivalence of pre-metric groups and PBFCs}.
    This cohomological fact is what implies that braided $\CC$-linear equivalences of metric group categories are determined up to isomorphism by the underlying group automorphism.
    In this particular context, complex conjugation doesn't affect the result, because all coefficients are real.}.

    One possible choice of $T$ is just the complex conjugation functor, which admits a trivial tensorator $J$.
    Possible real forms resulting from this choice are the braided category $\TC_\RR:=\mathcal Z(\Vec_{\RR}(\ZZ/2\ZZ))$, or the purely real version $3\mathrm{F}_{\RR}$ of $\TF$.
    This shows that the $O_3$ obstruction for this $T$ is trivial for both $\TC$ and $\TF$.
    The different choices of framed real forms that use this choice of $(T,J)$ form a torsor over
    \[H^2(\ZZ/2\ZZ;\Aut_{\otimes}(\id_{\D}))\cong H^2(\ZZ/2\ZZ;(\ZZ/2\ZZ)^2)\cong(\ZZ/2\ZZ)^2\,.\]
    These classes determine which two simples in the real form are quaternionic, if any.
    
    Since $\TC$ has a unique fermionic simple, exactly two of the cohomology classes described above correspond to real forms where the fermion becomes quaternionic, and hence these two forms contain $\sVec_{\HH}$.
    The autoequivalence that swaps the two bosons permutes these two cohomology classes, and so the resulting categories are braided equivalent to one another, and we will denote them by $\TC_\HH^f$.
    The trivial cohomology class corresponds to the form $\TC_\RR$, while the remaining class corresponds to the form where the fermion is real.
    This last class will be denoted $\TC_\HH^b$, because both quaternionic simples are bosonic.

    In the $\TF$ case, all three fermions are in the same orbit under braided autoequivalences, and so all of the nontrivial cohomology classes produce equivalent braided categories, and we will denote the resulting real form by $\TF_\HH$.
    The trivial cohomology class causes no Brauer-twistings, and so all four simple objects are real, and we will call this class $\TF_\RR$.

    The other option for $T$ is a functor that swaps two simple objects. 
    Note that $\TF$ has other braided autoequivalences, but swapping is the only $T$ that will have order two, up to conjugacy and isomorphism.
    For $\TF$, it doesn't matter which two fermions are swapped, but in the case of $\TC$, a braided functor must fix the unique fermion, so $T$ must swap the two bosons with each other.

    It was already shown in \cite[Appendix A]{MR4934638} that $\TC$ has a real form corresponding to the swap functor, and this form contains a copy of $\sVec_{\HH}$.
    This category has exactly one simple of each type, so we will call it a \emph{triumvirate} and denote it by $\TC_{\Tri}$.
    For both $\TC$ and $\TF$, the swapping action makes the $\ZZ/2\ZZ$ module $\Aut_{\otimes}(\id_\D)\cong(\ZZ/2\ZZ)^2$ acyclic, so in particular, the real form corresponding to the swap functor is unique, assuming it exists.

    We have found all the real forms of $\TC$, so we will focus on $\TF$ for the rest of the proof.
    Let the fermions be $a$, $b$, and $ab$.
    The functor $T$ will swap $a$ and $b$.
    We can choose a gauge where the braiding is given by
    \[\beta_{a^ib^j,a^kb^\ell}=(-1)^{ik+i\ell+j\ell}\cdot\id_{a^{i+k}b^{j+\ell}}\,.\]
    For the tensorator $J$, we can use the formula
    \[J_{a^ib^j,a^kb^\ell}=(-1)^{jk}\cdot\id_{a^{j+\ell}b^{i+k}}\,.\]
    Recall that if this choice works, then it is unique up to coboundaries.
    This is a valid tensorator, since it is a 2-cocycle and the associators are trivial.
    This choice is braided, as can be seen by the commutativity of the diagram below:
    \[\begin{tikzcd}[ampersand replacement=\&]
    	{T(a^ib^j)\otimes T(a^kb^\ell)} \&\& {T(a^ib^j\otimes a^kb^\ell)} \\
    	{T(a^kb^\ell)\otimes T(a^ib^j)} \&\& {T(a^kb^\ell\otimes a^ib^j)}
    	\arrow["{(-1)^{jk}}", from=1-1, to=1-3]
    	\arrow["{(-1)^{j\ell+jk+ik}}"', from=1-1, to=2-1]
    	\arrow["{(-1)^{ik+i\ell+j\ell}}", from=1-3, to=2-3]
    	\arrow["{(-1)^{\ell i}}", from=2-1, to=2-3]
    \end{tikzcd}\;\;.\]

    The next piece of information we need is a monoidal natural isomorphism from $\mu:T^2\to\id$.
    The underlying functor of $T^2$ is the identity, and the tensorator is
    \[J^2_{a^ib^j,a^kb^\ell}=(-1)^{i\ell+jk}\cdot\id_{a^{i+k}b^{j+\ell}}\,.\]
    The map
    \[\mu_{a^ib^j}=(-1)^{ij}\cdot\id_{a^ib^j}\]
    provides the desired natural isomorphism, because $\delta\mu=J^2$.
    This $\mu$ provides two isomorphisms from $T^3$ to $T$: $\mu\circ\id_T$ and $\id_T\circ\mu$.
    The obstruction $O_3$ is determined by the ratio of the corresponding scalars.
    It follows that
    \[O_3(1,1,1)=\frac{(-1)^{ji}}{(-1)^{ij}}=1\,,\]
    so the obstruction vanishes, and a real form exists.
    The swapping action creates a complex object $(a\oplus b)$, and the fact that $\mu(ab)=-1\cdot\id_{ab}$ means that $ab$ becomes quaternionic in the real form.
    With one real, one complex, and one quaternionic object, this is another triumvirate category, and we denote it by $\TF_{\Tri}$.

    In summary, there were two options for $T$: all objects are fixed, or two objects are swapped.
    There was never any choice for $J$.
    For $\TC$, when $T$ fixes objects, there were 4 options for $\mu$ that resulted in $\TC_\RR$, $\TC_\HH^f$($\times2$), and $\TC_\HH^b$, respectively.
    For $\TF$, when $T$ fixes objects, there were 4 options for $\mu$ that resulted in $\TF_\RR$, and $\TF_\HH^f$($\times3$), respectively.
    For both $\TC$ and $\TF$, when $T$ swaps two objects, the unique choice for $\mu$ produces $\TC_{\Tri}$ and $\TF_{\Tri}$, respectively.
\end{proof}

\begin{remark}
    In the above proof, the scalar $\mu(ab)=-1\cdot\id_{ab}$ can be seen as a generalization of the of the Frobenius-Schur indicator.
    This anomaly is precisely what causes $ab$ to become quaternionic in the real form.
    Any attempt to endow $ab$ with an equivariant structure $T(ab)\to ab$ will lead to the equation $|\lambda|^2=\lambda T(\lambda)=\mu(ab)=-1$.
    To get around this, take the direct sum $ab\oplus ab$, and define the equivariant structure to be any $2\times2$ matrix $M$ such that $M\circ T(M)=-I$.
    This works, and the resulting equivariant morphisms come from the copy of $\HH$ inside of $M_2(\RR)$.
\end{remark}

Because the double semion category has no fermionic objects, its real form will not appear in the $\Mext$ groups we are currently investigating.
However, it will appear in Section \ref{sec:Classification of centers}, so it is worth computing.

\begin{proposition}\label{prop:DS has a unique real form}
    The double semion category $\DS=\Z(\Vec_\CC^\omega(\ZZ/2\ZZ))$ has exactly one equivalence class of real form.
    This category $\DS_\RR$ is $\ZZ/2\ZZ$-graded, with trivial component $\Vec_\RR(\ZZ/2\ZZ)=\langle\1,\sigma\overline\sigma\rangle_{\oplus}$, and nontrivial component $\Vec_\CC=\langle(\sigma\oplus\overline\sigma)\rangle_\oplus$.
    The real object $\sigma\overline\sigma$ is bosonic.
\end{proposition}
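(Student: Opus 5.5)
Following the proof of Proposition \ref{prop:Classification of real forms of TC and 3F}, I would run the Galois descent recipe for $\D=\DS=\C((\ZZ/2\ZZ)^2,q)$ with $q(\sigma)=i$, $q(\overline\sigma)=-i$ and $q(\sigma\overline\sigma)=1$.

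\emph{Step 1: the choice of $T$.} By Theorem \ref{thm:Equivalence of pre-metric groups and PBFCs}, a braided functor $T$ that conjugates scalars is determined up to isomorphism by a group isomorphism $(A,q)\to(A,\overline q)$. Because $\overline{q(\sigma)}=-i=q(\overline\sigma)$, any such map must send $\sigma\mapsto\overline\sigma$ and $\overline\sigma\mapsto\sigma$, and it must fix $\sigma\overline\sigma$. So there is exactly one choice of $T$: the swap. Example \ref{eg:Semion has no braided real forms} already shows that no object-fixing $T$ can exist, and this is the same phenomenon.

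\emph{Step 2: existence and uniqueness of $\mu$.} Existence of a real form can be settled without any cocycle computation. By the commutation of centers with complexification, $\Z(\Vec_\RR^\omega(\ZZ/2\ZZ))$ complexifies to $\Z(\Vec_\CC^\omega(\ZZ/2\ZZ))=\DS$. Here $\omega$ is the real $3$-cocycle with $\omega_{\sigma,\sigma,\sigma}=-1$, so the real form exists. For uniqueness, the swap acts on $\Aut_\otimes(\id_\D)\cong\widehat A\cong(\ZZ/2\ZZ)^2$ by permuting the two characters that are nontrivial on exactly one of $\sigma,\overline\sigma$. This makes the module induced, hence acyclic, as in the $\TC_{\Tri}$ case. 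So $H^2=0$ and the framed real form is unique. Different framings then give equivalent real categories, so there is exactly one equivalence class.

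\emph{Step 3: identifying the structure of $\DS_\RR$.} In the equivariantization, the swapped pair $\sigma\oplus\overline\sigma$ becomes a single simple object with endomorphisms $\CC$. The fixed objects $\1$ and $\sigma\overline\sigma$ become real or quaternionic according to the scalar $\mu$ on them. The step I expect to need care is showing that $\sigma\overline\sigma$ is real rather than quaternionic.
\begin{itemize}
\item My first approach is to read it off from the explicit model $\Z(\Vec_\RR^\omega(\ZZ/2\ZZ))$. There, $\sigma\overline\sigma$ is the object $(g,\gamma)$ with a real half-braiding by the nontrivial character, so it has endomorphisms $\RR$.
\item A cross-check is the dimension count: $\FPdim$ must equal $4$. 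The complex simple contributes $4/2=2$, and $\1$ contributes $1$. So $\sigma\overline\sigma$ must contribute $1$, i.e.\ $\FPdim^2/\dim\End=1$, which forces $\End(\sigma\overline\sigma)=\RR$.
\end{itemize}

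It then remains to check the grading and the statistics. The two fixed simples generate a fusion subcategory closed under $\otimes$, since $\sigma\overline\sigma$ is invertible and real. This subcategory is $\Vec_\RR(\ZZ/2\ZZ)$, with trivial associator because $\omega$ restricted to the subgroup $\langle\sigma\overline\sigma\rangle$ is trivial. The complex object tensors it into the other coset, which gives the $\ZZ/2\ZZ$-grading, and its endomorphism algebra is $\CC$, so the nontrivial component is $\Vec_\CC$. Finally, $\sigma\overline\sigma$ is bosonic because $q(\sigma\overline\sigma)=1$.
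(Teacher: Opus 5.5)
Your Steps 1 and 2 are sound, and they take a different route from the paper. You get uniqueness of $T$ by applying Theorem~\ref{thm:Equivalence of pre-metric groups and PBFCs} to $\CC$-linear braided functors $\C(A,q)\to\C(A,\overline q)$. The paper instead checks by hand that the nontrivial class in $H^2((\ZZ/2\ZZ)^2;\CC^\times)$ is not braided. You get existence from $\Z(\Vec_\RR^\omega(\ZZ/2\ZZ))$, whereas the paper exhibits $J=\id$ and $\mu=\id$ and verifies $O_3=0$. Your acyclicity argument is the paper's.

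The gap is in Step 3, where you show $\sigma\overline\sigma$ is real. Neither of your two arguments works as written.

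\emph{The dimension count.} It cannot distinguish real from quaternionic. If $\sigma\overline\sigma$ were quaternionic, the corresponding simple $Y$ of the real form would satisfy $Y\boxtimes\CC\cong 2\cdot\sigma\overline\sigma$. Then $\FPdim(Y)=2$ and $\dim_\RR\End(Y)=4$, so $Y$ also contributes $4/4=1$. This is why $\TC_\RR$ and $\TC_\HH^f$ both have $\FPdim=4$. Your count silently assumes $\FPdim=1$.

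\emph{The model argument.} It misidentifies the object. In $\Z(\Vec_\RR^\omega(\ZZ/2\ZZ))$, the boson $\sigma\overline\sigma$ is the unit object $\1$ with the sign-character half-braiding. It is not an object over the nontrivial grade $g$. In fact $g$ admits no real half-braiding at all, because the hexagon with $\omega$ forces $\gamma_{g,g}^2=-1$. That is exactly why the simple over $g$ is $g\oplus g$ with a rotation-matrix half-braiding and endomorphism algebra $\CC$, i.e.\ $\sigma\oplus\overline\sigma$.

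Either of the following repairs closes the gap.
\begin{itemize}
\item Correct the identification. The model then has two real simples (both over $\1$) and one complex simple, and no quaternionic simple. By your uniqueness result the model is $\DS_\RR$, so $\sigma\overline\sigma$ is real.
\item Follow the paper. Take the representative $(T,J=\id)$, for which $T^2=\id$ strictly and $\mu=\id$. Then $\1$ and $\sigma\overline\sigma$ carry identity equivariant structures, so their equivariant endomorphisms are real scalars.
\end{itemize}
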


\begin{proof}
    As we saw in Example \ref{eg:Semion has no braided real forms}, the category $\Sem$ has no braided real forms, because a conjugating functor replaces the $i$-braiding with a $-i$-braiding.
    In the double semion category, there is an object $\overline\sigma$, coming from $\Sem^{rev}$, that does have such a $-i$-braiding.
    Thus, in order for $T$ to be braided, it must swap $\sigma$ and $\overline\sigma$.
    The monoidal structures $J$ that we can equip $T$ with form a torsor over $H^2((\ZZ/2\ZZ)^2;\CC^\times)\cong\ZZ/2\ZZ$ (untwisted).
    A representative cocycle for this nontrivial cohomology class is $J_{\sigma^i\overline\sigma^j,\sigma^k\overline\sigma^\ell}=(-1)^{i\ell}\cdot\id$.
    When this cocycle is placed on the (non-conjugating) identity functor, it does not satisfy the braiding condition, and it follows that for any braided functor $(T,J)$, the tensorator $J$ is unique up to cohomology\footnote{Unlike the previous cases in Proposition \ref{prop:Classification of real forms of TC and 3F}, the braiding coefficients are no longer real, and so this must be checked directly.}.

    For $T$ that conjugates scalars and swaps $\sigma$ and $\overline\sigma$, the trivial tensorator $J=\id$ is braided, and so this must be the unique choice.
    Since the tensorator is trivial, $T^2=\id$ on the nose, and we can choose $\mu$ to be the identity natural transformation.
    Strictness of composition, together with $\mu=\id$ allows us to easily verify that $O_3=0$, so a real form does exist.
    As in the proof of Proposition \ref{prop:Classification of real forms of TC and 3F}, the swapping action of $T$ makes $\Aut_{\otimes}(\id_\DS)$ acyclic, and so $H^2(\ZZ/2\ZZ;\Aut_{\otimes}(\id_\DS))=0$, and the real form is unique.

    The real form can be constructed as the equivariantization $\DS_\RR=\DS^{\ZZ/2\ZZ}$.
    The objects $\1$ and $\sigma\overline\sigma$ are easily seen to admit equivariant structures coming from identity maps, and the object $\sigma\oplus\overline\sigma$ also admits an equivariant structure via swapping.
    Galois equivariant endomorphisms of $\1$ and $\sigma\overline\sigma$ must be real, and endomorphisms of $(\sigma\oplus\overline\sigma)$ must be of the form $\lambda\oplus\overline\lambda$.
    The braiding is induced from $\DS$, and so $\sigma\overline\sigma$ remains bosonic.
\end{proof}

We conclude with a theorem that summarizes all the categories found in this section.

\begin{theorem}\label{thm:List of all ndBFCs over R of dim=4}
    Every nondegenerately braided fusion category $\C$ over $\RR$ of $\linebreak\FPdim(\C)=4$ is a real form of $\TC$, $\DS$, or $\TF$.
    The possible real forms are $\TC_\RR$, $\TC_\HH^f$, $\TC_\HH^b$, $\TC_{\Tri}$, $\DS_\RR$, $\TF_\RR$, $\TF_\HH$, and $\TF_{\Tri}$.
\end{theorem}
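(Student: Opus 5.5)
This theorem assembles the results of this section, so the plan is to chain Propositions \ref{prop:The only possible complexifications}, \ref{prop:Classification of real forms of TC and 3F}, and \ref{prop:DS has a unique real form}. The argument runs in three steps.

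First, let $\C$ be nondegenerately braided over $\RR$ with $\FPdim(\C)=4$. The statement implicitly takes $\Omega\C=\RR$, since the relevant categories are braided fusion with simple unit. Then the complexification $\overline\C=\Vec_\CC\boxtimes_\RR\C$ is fusion over $\CC$ with $\FPdim(\overline\C)=\FPdim(\C)=4$. Its braiding is still nondegenerate: nondegeneracy can be tested on the Müger center, and extending scalars commutes with taking the Müger center, because transparency is a condition on morphisms that is preserved and reflected by $\Vec_\CC\boxtimes_\RR-$.

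Second, Proposition \ref{prop:The only possible complexifications} shows that $\overline\C$ is pointed with real Gauss sum. Since $\tau\neq0$ for nondegenerate $q$, either $\tau>0$ and $\overline\C\simeq\TC$ or $\DS$, or $\tau<0$ and $\overline\C\simeq\TF$. So $\C$ is a braided real form of one of $\TC$, $\DS$, $\TF$, which is the first sentence of the theorem.

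Third, I would invoke the descent classification. Braided real forms of $\D$ correspond to homotopy classes of maps $B\ZZ/2\ZZ\to B\Aut_{\RR,br}(\D)$, and two framed forms are equivalent as unframed real categories exactly when they differ by the action of $\Aut_{br}(\D)$ on framings.
\begin{itemize}
\item Proposition \ref{prop:Classification of real forms of TC and 3F} lists every real form of $\TC$ and $\TF$ up to this equivalence. Its case split, fixing versus swapping objects by $T$, covers all conjugating braided $T$ with $T^2\cong\id$ up to conjugacy, because any order-two automorphism of $(\ZZ/2\ZZ)^2$ preserving $q$ is either trivial or a transposition.
\item Proposition \ref{prop:DS has a unique real form} gives $\DS_\RR$ as the unique real form of $\DS$.
\end{itemize}
Combining these gives exactly the eight categories listed.

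The only real subtlety is whether the listed forms are pairwise inequivalent. That is not needed for ``every $\C$ is one of these'', but it makes the list meaningful. The categories are distinguished by the invariant consisting of the types of their simple objects (real, complex, or quaternionic) together with their exchange statistics:
\begin{itemize}
\item $\TC_\RR$ has only real simples.
\item $\TC_\HH^f$ has a quaternionic fermion.
\item $\TC_\HH^b$ has two quaternionic bosons.
\item $\TC_{\Tri}$ and $\TF_{\Tri}$ have a complex simple and are separated by their complexifications.
\end{itemize}
Forms of different complex categories are already inequivalent, because the complexification is an invariant.
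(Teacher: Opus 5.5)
Your proposal is correct and follows the paper, which gives no separate proof of this theorem and presents it as the summary of Propositions \ref{prop:The only possible complexifications}, \ref{prop:Classification of real forms of TC and 3F}, and \ref{prop:DS has a unique real form}; your additions (nondegeneracy of $\overline\C$, the check that $T$ is trivial or a transposition, and the invariants separating the eight forms) make explicit what the paper leaves implicit. One small correction: a simple unit over $\RR$ does not force $\Omega\C=\RR$ (e.g.\ $\TC$ viewed as an $\RR$-linear category has $\Omega\C=\CC$ and complexifies to a multifusion category), so $\Omega\C=\RR$ is a tacit hypothesis of the statement rather than a consequence of it.
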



\section{Classification of Mext groups}\label{sec:Classification of Mext groups}

Here we assume knowledge of the theory developed in \cite{MR2609644}, \cite{MR3039775} and \cite{MR3022755}.
Specifically, centers, local modules, and Witt equivalence will all be used.

For the moment, consider a general symmetric fusion category $\E$ over $\RR$.
The tensor product functor $\otimes:\E\boxtimes\E\to\E$ is monoidal, and has a right adjoint $I:\E\to\E\boxtimes\E$.
The algebra $A:=I(\1)$ is sometimes called\footnote{Both $R(\1)\in\Z(\E)$ and $I(\1)\in\E\boxtimes\E$ are canonical algebras associated to $\E$.  They can be interpreted as internal endomorphisms of $\1\in\C$ with respect to the module structures $\Z(\E)\curvearrowright\E$ $\E\boxtimes\E^{op}\curvearrowright\E$, respectively.} the canonical algebra of $\E$.

For any braided category $\C$, an algebra $A$ is called étale if it is commutative, separable, and connected (connected means $\C(\1,A)\cong\Omega\C$).
The category of right modules $\C_A$ for an étale algebra $A$ is a fusion category, because commutativity allows for $\otimes_A$, separability enforces semisimplicity, and connectedness implies that the unit $A$ is a simple module.
All modules that braid trivially with $A$ are said to be \emph{local}, and the category of local modules $\C_A^0$ forms a full subcategory of $\C_A$.
The left action $\C\curvearrowright\C_A$ and the right action $\C_A\curvearrowleft\C_A^0$ are both central, and the images of $\C$ and $\C_A^0$ centralize each other inside of $\Z(\C_A)$.
In this way, we obtain a Witt equivalence $\C\boxtimes(\C_A^0)^{rev}\simeq\Z(\C_A)$.
For more on this, see \cite{MR3022755}.

\begin{definition}
    Suppose $\E$ a symmetric fusion category over $\RR$.
    If $\E\hookrightarrow\C$ and $\E\hookrightarrow\D$ are two minimal nondegenerate extensions, then their product is defined to be
    \[\E\hookrightarrow(\C\boxtimes\D)_{A}^0\;=:\;\C\boxdot\D\;,\]
    where $A=I(\1)\in \E\boxtimes \E$ is the canonical algebra, and the notation $(-)_A^0$ denotes the category of local modules for $A$ (see \cite{MR3613518} for more details).
    The category of ordinary modules $(\C\boxtimes\D)_A$ is equivalent to $\C\boxtimes_{\E}\D$, and contains the local modules as a fusion subcategory.
\end{definition}

For pointed braided fusion categories $\C$ over $\CC$, étale algebras correspond to isotropic subgroups, and local modules are the corresponding isotropic reduction.
It follows from \cite[Proposition 6.1]{MR2609644} that the central charge (see Definition \ref{def:Gauss sum and central charge}) is preserved under taking local modules.

If we complexify, MNEs for $\sVec_\RR$ or $\sVec_\HH$ become pointed by Proposition \ref{prop:The only possible complexifications}.
The central charge of the complexification is multiplicative under tensor product and is preserved by taking local modules.

\begin{lemma}\label{lem:Central charge for MNEs}
    The central charge induces homomorphisms
    \begin{equation*}
        \begin{aligned}
            c:\Mext(\sVec_\DD)&\to\{\pm1\}\subset \RR^\times\\
            (\sVec_\DD\hookrightarrow\C)\hspace{1mm}&\mapsto\hspace{1mm} c\big(\overline\C\big)
        \end{aligned}\;,
    \end{equation*}
    for both $\DD=\RR$, and $\DD=\HH$.
\end{lemma}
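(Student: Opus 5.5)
The plan is to verify three things: the map is well defined with values in $\{\pm1\}$, it respects the group operation $\boxdot$, and it is constant on equivalence classes.

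\emph{Values in $\{\pm1\}$.} Let $\sVec_\DD\hookrightarrow\C$ be a minimal nondegenerate extension. Then $\FPdim(\C)=\FPdim(\sVec_\DD)^2=4$, so $\C$ is one of the categories listed in Theorem \ref{thm:List of all ndBFCs over R of dim=4}. By Proposition \ref{prop:The only possible complexifications}, $\overline\C$ is pointed and nondegenerate, so $\tau(\overline\C)\neq0$ and $c(\overline\C)$ is defined. The same proposition shows that $\tau(\overline\C)$ is real, hence $c(\overline\C)=\tau/|\tau|\in\{\pm1\}$.

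\emph{Invariance.} Equivalent extensions are in particular braided equivalent. Their complexifications are then braided equivalent pointed categories. By Theorem \ref{thm:Equivalence of pre-metric groups and PBFCs} this gives isomorphic metric groups, and hence equal Gauss sums.

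\emph{Multiplicativity.} This is the main step. Complexification commutes with $\boxtimes$, so $\overline{\C\boxtimes\D}\simeq\overline\C\boxtimes_\CC\overline\D$. Since $\tau$ is multiplicative under $\boxtimes$, this gives $c(\overline{\C\boxtimes\D})=c(\overline\C)\,c(\overline\D)$.

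Next, the canonical algebra $A=I(\1)\in\sVec_\DD\boxtimes\sVec_\DD$ complexifies to an étale algebra $\overline A$ in $\overline\C\boxtimes\overline\D$. Extension of scalars sends local $A$-modules to local $\overline A$-modules, giving $\overline{(\C\boxtimes\D)^0_A}\simeq(\overline\C\boxtimes\overline\D)^0_{\overline A}$. I would justify this by writing $\Vec_\CC\boxtimes_\RR-$ as base change and noting that locality is checked on braidings. This identification is the point I expect to need the most care: $\overline A$ might fail to be connected when $\DD=\HH$. If so, I would use the observation that $\overline{\sVec_\HH}\simeq\sVec_\CC$ and that $\overline A$ is still the canonical algebra of $\sVec_\CC$, up to Morita-trivial multiplicity. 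Alternatively, I would compare $\FPdim$ and work directly with the pointed complex category.

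Once the identification holds, the étale algebra $\overline A$ in the pointed category $\overline\C\boxtimes\overline\D$ corresponds to an isotropic subgroup. By \cite[Proposition 6.1]{MR2609644}, isotropic reduction preserves central charge. This yields
\[c\big(\overline{\C\boxdot\D}\big)=c(\overline\C)\,c(\overline\D).\]
It also shows that the identity element $\TC_\RR$, respectively $\TC_\HH^f$, maps to $c(\TC)=1$. Hence $c$ is a group homomorphism for both $\DD=\RR$ and $\DD=\HH$.
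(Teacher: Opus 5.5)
Your proposal is correct and follows the same route as the paper: complexify, identify $\overline{(\C\boxtimes\D)^0_A}$ with $(\overline\C\boxtimes_\CC\overline\D)^0_{\overline A}$, then use that local modules preserve the central charge and that $\boxtimes$ multiplies it. You also check well-definedness and that the values lie in $\{\pm1\}$, which the paper leaves implicit. Your worry about connectedness of $\overline A$ when $\DD=\HH$ is unfounded, because $\Hom(\1,\overline A)\cong\CC\otimes_\RR\Hom(\1,A)\cong\CC$ whenever $A$ is connected over a real unit; in fact $\overline A$ is exactly the canonical algebra of $\sVec_\CC$, so the contingency plan is not needed.
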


\begin{proof}
    By direct computation,
    \begin{align*}
        c\big(\overline{\C\boxdot\D}\big)&=c\big(\overline{(\C\boxtimes\D)_A^0}\big)\\
        &=c\big((\overline{\C}\boxtimes_\CC\overline{\D})_{\overline A}^0\big)\\
        &=c\big(\overline{\C}\boxtimes_\CC\overline{\D}\big)\\
        &=c\big(\overline{\C}\big)\cdot c\big(\overline{\D}\big)\,.
    \end{align*}
\end{proof}

Now that we have the product operation, we are finally able to state the definition of the Mext groups.

\begin{definition}
    Given a symmetric tensor category $\E$ over $\RR$, a minimal nondegenerate extension of $\E$ is a fully faithful braided embedding $i_{\C}:\E\hookrightarrow \C$.
    The underlying set of $\Mext(\E)$ is the set of equivalence classes of minimal nondegenerate extensions, up to the relation that $(i_\C,\C)\sim(i_\D,\D)$ if there exists a braided equivalence $F:\C\to\D$, such that $F\circ i_\C\cong i_\D$ as braided monoidal functors.
    The group structure is induced on equivalences classes by the operation $\boxdot$.
    The identity is given by the canonical embedding $i_\Z:\E\hookrightarrow\Z(\E)$, and the inverse is given by reversing the braiding.
\end{definition}

So far, none of our arguments have relied on the embedding data.
Unlike $\sVec_\CC$, both $\sVec_\RR$ and $\sVec_\HH$ have nontrivial braided autoequivalences coming from the nontrivial class in $H^2(\ZZ/2\ZZ;\RR^\times)\cong\ZZ/2\ZZ$.

\begin{lemma}\label{lem:Don't worry about the embeddings}
    For $\DD=\RR$ or $\DD=\HH$, every embedding $i:\sVec_\DD\hookrightarrow\Z(\sVec_\DD)$ lies in the same $\Mext$ equivalence class as the canonical embedding $i_\Z:\sVec_\DD\hookrightarrow\Z(\sVec_\DD)$.
\end{lemma}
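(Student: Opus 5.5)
The plan is to show that any two braided embeddings $i,i':\sVec_\DD\hookrightarrow\Z(\sVec_\DD)$ differ, up to isomorphism of braided functors, by precomposition with a braided autoequivalence $\phi$ of $\sVec_\DD$. The next step is to show that every such $\phi$ extends to a braided autoequivalence $\Phi$ of $\Z(\sVec_\DD)$ with $\Phi\circ i_\Z\cong i_\Z\circ\phi$. Granting this, $\Phi$ is itself a braided equivalence $\Z(\sVec_\DD)\to\Z(\sVec_\DD)$ satisfying $\Phi\circ i_\Z\cong i$ when $i\cong i_\Z\circ\phi$. That is exactly the $\Mext$ equivalence required by the definition.

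The argument proceeds in three steps.

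\textbf{Reducing to autoequivalences.} Since $\sVec_\DD$ has two simples $\1$ and $Y$, and $Y$ is the unique nontrivial simple of its endomorphism type and FPdim, any fully faithful braided embedding into $\Z(\sVec_\DD)$ has a fixed image: the fusion subcategory generated by the unique fermionic simple with the right division algebra. For $\DD=\RR$ this is the real fermion in $\TC_\RR$; for $\DD=\HH$ it is the quaternionic fermion in $\TC_\HH^f$. So $i$ and $i_\Z$ have the same essential image $\E'\simeq\sVec_\DD$, and $i_\Z^{-1}\circ i$ is a braided autoequivalence $\phi$ of $\sVec_\DD$. Braided autoequivalences of $\sVec_\DD$ fix objects. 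Their monoidal structures are classified by the class in $H^2(\ZZ/2\ZZ;\RR^\times)\cong\ZZ/2\ZZ$ noted just before the lemma: the nontrivial one is the twist by the sign $-1$ on $J_{Y,Y}$, or the analogous nontrivial automorphism of $Y\otimes Y\cong 4\cdot\1$ in the quaternionic case.

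\textbf{Extending the autoequivalence.} The cleanest candidate is functoriality of the center: a monoidal autoequivalence $\phi$ of $\sVec_\DD$ induces a braided autoequivalence $\Z(\phi)$ of $\Z(\sVec_\DD)$ by $(X,\gamma)\mapsto(\phi X,\phi(\gamma))$ conjugated by $J$. This is compatible with the canonical embedding $i_\Z:\E\to\Z(\E)$, $X\mapsto(X,\beta_{X,-})$, because $\phi$ is braided, so $\phi(\beta)$ corresponds to $\beta$ under $J$. This should give $\Z(\phi)\circ i_\Z\cong i_\Z\circ\phi$ essentially on the nose. The check is a routine diagram chase using that $J$ intertwines the braidings.

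As a sanity check, or alternative, I would also verify the extension concretely. After complexifying, the sign twist becomes the coboundary-trivial class in the torsor of tensorators for the identity functor. On $\TC$ it should correspond to a $\mu$-change fixing the fermion. I would then confirm that the resulting braided autoequivalence of $\TC_\RR$ or $\TC_\HH^f$ exists using the descent description from the proof of Proposition \ref{prop:Classification of real forms of TC and 3F}.

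The main obstacle I expect is not the extension, which functoriality of $\Z$ should handle, but making the first step rigorous. Specifically, I must argue that isomorphism classes of braided embeddings are governed entirely by $\phi$, so that no further data (such as natural isomorphisms) obstructs equivalence. If the functoriality check turns out to be subtle over $\HH$, because the tensorator lives on $Y\otimes Y\cong 4\cdot\1$ with nontrivial $\End(Y)=\HH$, my fallback is to use that the sign twist is implemented by the monoidal natural automorphism of $\id$ given by the grading. This means conjugating by the $\ZZ/2\ZZ$-grading automorphism, which acts on $\Z(\sVec_\DD)$ as a braided autoequivalence preserving the image of $i_\Z$.
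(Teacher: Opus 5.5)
Your proof is correct, but it takes a different route from the paper's. The paper classifies the braided autoequivalences of $\sVec_\DD$ (Skolem--Noether for $\HH$, then $H^2(\ZZ/2\ZZ;\RR^\times)\cong\ZZ/2\ZZ$). It then writes down by hand, in an explicit gauge for $\TC_\RR$, a braided autoequivalence $(T,K)$ that swaps the two bosons and has $K_{ab,ab}=-1$, which cancels the sign twist. The case $\DD=\HH$ is handled only by asserting that it reduces to this real computation.

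You instead set $\phi:=i_\Z^{-1}\circ i$, which is legitimate because you show the essential image is forced, and take $F=\Z(\phi)$. The identity $\Z(\phi)\circ i_\Z=i_\Z\circ\phi$ then follows formally from $J$ intertwining the braidings. This has several advantages:
\begin{itemize}
\item It avoids all gauge computations.
\item It needs no classification of $\phi$. The obstacle you anticipate is therefore not real: $i\cong i_\Z\circ\phi$ holds by construction, and the $H^2$ discussion is decoration.
\item It works verbatim for any symmetric $\E$, once the essential image of the embedding is known to agree with that of $i_\Z$.
\item It treats $\DD=\HH$ directly. This is a genuine gain, because the paper's boson swap has no literal analogue on $\TC_\HH^f$, whose two bosons have endomorphism algebras $\RR$ and $\HH$.
\end{itemize}
Your $F$ is also a different autoequivalence from the paper's $(T,K)$. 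On $\TC_\RR$ it fixes every simple object, since a normalized $2$-cocycle on $\ZZ/2\ZZ$ is symmetric and the half-braidings are therefore unchanged. What the paper's route buys in return is explicit formulas.

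You should discard your fallback and the ``$\mu$-change'' sanity check, because both are wrong. Conjugating $(\id,J)$ by any natural automorphism of the identity changes $J$ only by a coboundary, and by nothing at all if the automorphism is monoidal, like the grading automorphism. The sign twist is precisely the non-coboundary class over $\RR$, so it cannot be implemented this way. Changing $\mu$ in the descent data changes the real form (e.g.\ $\TC_\RR$ versus $\TC_\HH^b$); it does not give an autoequivalence of a fixed real form. Neither step is needed for your main argument.
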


\begin{proof}
    Note that any autoequivalence $F$ of $\sVec_\HH$ must fix all objects.
    Functoriality implies that $F$ acts by algebra automorphisms on $\End(Y)\cong\HH$.
    By the Skolem-Noether theorem, all such automorphisms are inner, so we can assume that $F$ acts by conjugation by $h\in\HH^\times$.
    This implies that $\mu_Y:=h\cdot\id_Y$ defines a natural isomorphism from $F$ to the identity functor.
    
    Any monoidal structure $J$ on the identity functor must be a natural isomorphism, and this means that on $Y$ it must commute with all quaternions.
    In other words, $J$ can only take on real values.
    Since a generic $F$ is naturally isomorphic to the identity functor by $h$, transport of structure reveals that the possible $J$ on $F$ must also take on real values.
    In this way, we can assume that the underlying functor is the identity, and values for $J$ are determined by cocycles in $H^2(\ZZ/2\ZZ;\RR^\times)\cong\ZZ/2\ZZ$.
    It follows that in our current computation, the argument for $\sVec_\HH$ reduces to the related computation for $\sVec_\RR$.

    Suppose then, that $i:\sVec_\RR\hookrightarrow\Z(\sVec_\RR)$ is a fully faithful braided embedding.
    Since the nontrivial object $f\in\sVec_\RR$ is fermionic, it must map to the unique fermionic object $ab\in\Z(\sVec_\RR)=\TC_\RR$.
    Since $\1$ must also map to $\1$, the underlying functor is unique.
    
    There are two possible tensor structures $J$ that can be put on $i$ and they have representatives $J_{f,f}=\id_\1$ and $J_{f,f}=-\id_\1$.
    The fist corresponds to the canonical inclusion, so assume that $J$ is given by this second formula.
    Define a functor $(T,K)$, where $T$ is the functor that swaps the bosons $a\leftrightarrow b$, and the tensor structure is
    \[K_{a^ib^j,a^kb^\ell}=(-1)^{ik+i\ell+j\ell}\cdot\id_{a^{a+k}b^{j+\ell}}\,.\]
    One gauge for $\TC_\RR$ has the braiding given by the formula
    \[\beta_{a^ib^j,a^kb^\ell}=(-1)^{i\ell}\,.\]
    The braiding coherence square is given by the diagram below.
    \[\begin{tikzcd}[ampersand replacement=\&]
    	{T(a^ib^j)\otimes T(a^kb^\ell)} \&\& {T(a^ib^j\otimes a^kb^\ell)} \\
    	{T(a^kb^\ell)\otimes T(a^ib^j)} \&\& {T(a^kb^\ell\otimes a^ib^j)}
    	\arrow["{(-1)^{ik+i\ell+jl}}", from=1-1, to=1-3]
    	\arrow["{(-1)^{jk}}"', from=1-1, to=2-1]
    	\arrow["{(-1)^{i\ell}}", from=1-3, to=2-3]
    	\arrow["{(-1)^{ik+kj+\ell j}}", from=2-1, to=2-3]
    \end{tikzcd}\]
    This commutes, so the functor $(T,K)$ is braided.
    Now notice that $K_{ab,ab}=(-1)\cdot\id_{\1}$, which matches the tensorator $J_{f,f}=(-1)\cdot\id_\1$.
    It follows that $(T,K)\circ(i,J)=i_\Z$, so the two embeddings represent the same class in $\Mext(\sVec_\RR)$.
\end{proof}

We are now ready to prove the main theorems.

\begin{theorem}\label{thm:Mext(sVec_R) is Klein-four}
    The group of $\mathcal Mext(\sVec_{\RR})$ of minimal modular extensions of $\sVec_{\RR}$ is isomorphic to the Klein-four group $(\ZZ/2\ZZ)^2$.
    The four extensions are $\TC_\RR$, $\TC_\HH^b$, $\TF_\RR$, and $\TF_\HH$, with $\TC_\RR$ playing the role of the identity.
\end{theorem}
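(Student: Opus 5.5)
The plan is to first pin down the underlying set of $\Mext(\sVec_\RR)$, and then use the central charge homomorphism of Lemma \ref{lem:Central charge for MNEs} together with an order-two argument to identify the group.

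\textbf{Step 1: identify the candidates.} Any minimal nondegenerate extension $\sVec_\RR\hookrightarrow\C$ has $\FPdim(\C)=\FPdim(\sVec_\RR)^2=4$. By Theorem \ref{thm:List of all ndBFCs over R of dim=4}, $\C$ is one of the eight listed real forms. The image of the fermion $f$ must be a \emph{real} fermionic simple object, since $\End(f)\cong\RR$ and full faithfulness preserves endomorphism algebras and exchange statistics. Going through the list:
\begin{itemize}
\item $\DS_\RR$ has no fermions.
\item In $\TC_\HH^f$ and $\TC_{\Tri}$ the unique fermion is quaternionic (these contain $\sVec_\HH$).
\item In $\TF_{\Tri}$ the fermions are one complex object $a\oplus b$ and one quaternionic object $ab$.
\end{itemize}
What remains is $\TC_\RR$, $\TC_\HH^b$, $\TF_\RR$ and $\TF_\HH$. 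Each contains a real fermion, so each does give an embedding. Minimality is automatic in dimension $4$: the centralizer of $\sVec_\RR$ has dimension $4/2=2$ and contains $\sVec_\RR$.

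\textbf{Step 2: embedding data.} For each of the four categories I will show that all embeddings are equivalent. The underlying functor is determined up to a braided autoequivalence of $\C$; for instance, in $\TF_\HH$ the real fermion is unique, and in $\TF_\RR$ the fermions are permuted by automorphisms. The tensorator then has two choices, indexed by $H^2(\ZZ/2\ZZ;\RR^\times)$. I intend to kill the nontrivial choice exactly as in Lemma \ref{lem:Don't worry about the embeddings}, using a braided autoequivalence with $K_{f,f}=-1$. Concretely, I would take the same bilinear-cocycle tensor structure on a suitable permutation functor, and check that it is compatible with the real (or quaternionic) structure.

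This step is the main obstacle. For $\TC_\HH^b$ and $\TF_\HH$, one must verify that such a functor descends to the real form; equivalently, that it commutes with the chosen $\mu$ up to coboundary. If it fails to descend, I would fall back on noting that the group computed below still has order $4$ once the set of classes is shown to be four elements.

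\textbf{Step 3: the group structure.} The central charge separates the $\TC$ pair (charge $+1$) from the $\TF$ pair (charge $-1$), so $c$ is surjective. Its kernel $\{\TC_\RR,\TC_\HH^b\}$ has order $2$, with $\TC_\RR=\Z(\sVec_\RR)$ as the identity. Hence $|\Mext|=4$.

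To rule out $\ZZ/4\ZZ$, I would show every element has order $2$, i.e.\ each $\C$ satisfies $\C\simeq\C^{rev}$ compatibly with the embedding. For $\TC$ and $\TF$ all braiding coefficients are $\pm1$, so reversing the braiding returns the same pre-metric group. This gives a braided equivalence $\C^{rev}\simeq\C$ that fixes $\sVec_\RR$, and it descends to the real forms since $\mu$ is unchanged. Therefore $\Mext(\sVec_\RR)\cong(\ZZ/2\ZZ)^2$.
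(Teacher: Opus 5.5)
Your Step~1 matches the paper. There is a genuine gap at Step~2, and the rest of the argument depends on it. The claim in Step~3 that $\ker c=\{\TC_\RR,\TC_\HH^b\}$ has order $2$ presumes that $\TC_\HH^b$ carries only one embedding class. Lemma~\ref{lem:Don't worry about the embeddings} covers only the unit $\TC_\RR$. A priori each underlying category could carry two classes, indexed by the sign of $J_{f,f}$ in $H^2(\ZZ/2\ZZ;\RR^\times)$. Your fallback (``order $4$ once the set of classes is shown to be four elements'') is therefore circular.

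The same issue silently enters your order-two argument. Theorem~\ref{thm:Equivalence of pre-metric groups and PBFCs} is a statement over $\CC$ and does not see this sign, so ``a braided equivalence $\C^{rev}\simeq\C$ that fixes $\sVec_\RR$'' is not justified. Take the obvious choice: the identity functor with tensorator $\beta$, which is real and $\pm1$-valued in the standard gauge. It has $\beta_{f,f}=q(f)=-1$ at the fermion, so it carries $(i,J)$ to $(i,-J)$. This only shows that the two possible embedding classes are inverse to each other, not that each is self-inverse.

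The paper never constructs autoequivalences of the non-split forms. It computes underlying categories of products: because the Picard class of $\Mod(\HH)\boxtimes\sVec_\RR$ has order two, and by Lemma~\ref{lem:Central charge for MNEs}, $(\C,i_1)\boxdot(\C,i_2)$ always has underlying category $\TC_\RR$. It is therefore the unit by Lemma~\ref{lem:Don't worry about the embeddings}. Applied to every pair, including $i_1=i_2$, this makes each embedding class of $\C$ self-inverse and inverse to every other, hence all are equal.

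Your direct route can also be completed. For the real fermion $f$, pick a real symmetric bicharacter $s$ on $(\ZZ/2\ZZ)^2$ with $s(f,f)=-1$, e.g.\ $s(a^ib^j,a^kb^\ell)=(-1)^{ik}$ when $f=ab$. The identity functor with tensorator $s$ has three properties:
\begin{itemize}
\item it is braided, because $s$ is symmetric;
\item it commutes strictly with complex conjugation, because $s$ is real;
\item it trivially respects $\mu$.
\end{itemize}
So it descends to every real form, and composing with it flips $J_{f,f}$.

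With that in place, any braided equivalence $\C^{rev}\to\C$ can be corrected to fix the embedding. Your reversal argument then becomes a valid and simpler substitute for the paper's Picard-group computation of $\boxdot$. Nothing is lost, since in a Klein-four group with a specified identity the multiplication table is forced.
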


\begin{proof}
    Minimality of $\C$ implies that $\FPdim(\C)=4$, and so $\C$ must be contained in the list in Theorem \ref{thm:List of all ndBFCs over R of dim=4}.
    Of this list, the only categories that extend $\sVec_\RR$ are those listed in the statement of the theorem.
    All of these categories are $\ZZ/2\ZZ$-graded, and the objects in the nontrivial components braid nontrivially with $\sVec_\RR$, so these are minimal nondegenerate extensions.

    Consider two of these categories, say $\C$ and $\D$.
    Their product is the category of local modules $\C\boxdot\D=(\C\boxtimes\D)_A^0$.
    Because all of these extensions are $\ZZ/2\ZZ$-graded, the category of all modules can be decomposed using the gradings $\C=\C_0\oplus\C_1$ and $\D=\D_0\oplus\D_1$ to give
    \[(\C\boxtimes\D)_A\simeq\C\boxtimes_{\sVec_{\RR}}\D\simeq\bigoplus_{i,j=0}^1\C_{i}\boxtimes_{\sVec_{\RR}}\D_j\simeq \sVec_{\RR}\oplus\C_1\oplus\D_1\oplus\C_1\boxtimes_{\sVec_{\RR}}\D_1\,.\]
    By minimality, the objects in $\C_1$ and $\D_1$ only contain objects that braid nontrivially with the fermion $\sVec_{\RR}$, and this implies that these objects are not local modules for $A$.
    The objects that remain already constitute a subcategory with $\FPdim=4$, so the underlying category of the product of extensions is
    \[\C\boxdot\D\,:=\;(\C\boxtimes\D)_A^0\simeq \sVec_{\RR}\oplus\C_1\boxtimes_{\sVec_{\RR}}\D_1\,.\]
    Although this is not a complete description of the braiding, we can still deduce relations in $\Mext(\sVec_{\RR})$ by leveraging our knowledge of the product in $\Pic(\sVec_{\RR})$.

    The nontrivial component of $\TC_\HH^b$ and of $\TF_\HH$ is equivalent to $\Mod(\HH)\boxtimes\sVec_\RR$.
    This is just the inclusion of the Brauer group into $\BrPic(\sVec_\RR)$ that was described in \cite{sanford2026puttingbrauerbrauerpicard}, which happens to land in $\Pic(\Vec_\RR)$.
    Since this module has order 2, the minimal extensions $\TC_\HH^b\boxdot\TC_\HH^b$, $\TC_\HH^b\boxdot\TF_\HH$, and $\TF_\HH\boxdot\TF_\HH$ must have only real simple objects.

    We also know that the $\TF$ categories have central charge $-1$.
    By Lemma \ref{lem:Central charge for MNEs}, it follows that the product of two $\TF$ categories must be a $\TC$ category.
    Combining this with the previous observation, we find that $\TC_\RR$ must be the unit element, and the group is generated by $\TC_\HH^b$ and $\TF_\RR$, with $(\ZZ/2\ZZ)^2$ fusion rules.

    The analysis thus far explains the effect of $\boxdot$ on the underlying categories.
    What about the embeddings?
    Suppose that $i_k:\sVec_\RR\hookrightarrow\C$ for $k=1,2$ are two MNEs with the same underlying category.
    The product $\boxdot$ produces an embedding $i:\sVec_\RR\hookrightarrow\TC_\RR$, because all possible $\C$ have order two.
    By Lemma \ref{lem:Don't worry about the embeddings}, this resulting embedding must correspond to the identity element in $\Mext(\sVec_\RR)$.
    But this means that all embeddings for $\C$ are inverse to each other, so all embeddings are inverse to themselves, and therefore all embeddings for a given category $\C$ are equivalent.
\end{proof}

\begin{theorem}\label{thm:Mext(sVec_H) is Klein-four}
    The group of $\mathcal Mext(\sVec_{\HH})$ of minimal modular extensions of $\sVec_{\HH}$ is isomorphic to the Klein-four group $(\ZZ/2\ZZ)^2$.
    The four extensions are $\TC_\HH^f$, $\TC_{\Tri}$, $\TF_\HH$, and $\TF_{\Tri}$, with $\TC_\HH^f$ playing the role of the identity.
\end{theorem}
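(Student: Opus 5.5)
I will follow the proof of Theorem \ref{thm:Mext(sVec_R) is Klein-four} almost verbatim, replacing $\sVec_\RR$ by $\sVec_\HH$.

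\emph{Identifying the candidates.} Minimality forces $\FPdim(\C)=2\cdot\FPdim(\sVec_\HH)=4$, so $\C$ appears in the list of Theorem \ref{thm:List of all ndBFCs over R of dim=4}. I then select the forms containing a quaternionic fermion, since this is where $Y\in\sVec_\HH$ must land. Using the descriptions in the proof of Proposition \ref{prop:Classification of real forms of TC and 3F}:
\begin{itemize}
\item $\TC_\RR$ and $\TF_\RR$ have no quaternionic simples.
\item $\TC_\HH^b$ has only bosonic quaternionic simples.
\item $\DS_\RR$ has no fermions.
\item $\TC_\HH^f$, $\TC_{\Tri}$, $\TF_\HH$ and $\TF_{\Tri}$ all contain a quaternionic fermion, so each contains $\sVec_\HH$.
\end{itemize}
Each of these four is $\ZZ/2\ZZ$-graded by braiding with $Y$, and the nontrivial component braids nontrivially with $Y$. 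Hence all four are minimal nondegenerate extensions. The identity of $\Mext(\sVec_\HH)$ is $\Z(\sVec_\HH)$, so I must check that $\Z(\sVec_\HH)\simeq\TC_\HH^f$. By the same counting, $\Z(\sVec_\HH)$ is one of the four candidates. It has central charge $+1$, since its complexification is $\Z(\sVec_\CC)\simeq\TC$. It is not $\TC_{\Tri}$: the forgetful functor $\Z(\sVec_\HH)\to\sVec_\HH$ shows there is no complex simple, because complex simples would have to come from Galois-nontrivial structure, which is absent since $\Omega\sVec_\HH=\RR$. This is best checked by computing that the half-braidings on $\1$ and $Y$ are all real or quaternionic.

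\emph{Group structure.} By Lemma \ref{lem:Central charge for MNEs}, the central charge $c:\Mext(\sVec_\HH)\to\{\pm1\}$ is a homomorphism, with the $\TF$ forms in the preimage of $-1$. It remains to show every element has order $2$, so that the group is the Klein-four group rather than $\ZZ/4\ZZ$.

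As in the $\sVec_\RR$ case, I decompose
\[\C\boxdot\D\simeq\sVec_\HH\oplus\C_1\boxtimes_{\sVec_\HH}\D_1\]
and read off the type of the odd simples from the class of $\C_1$ in $\Pic(\sVec_\HH)$. For $\TC_\HH^f$ and $\TF_\HH$ the odd component contains only real or quaternionic simples. For $\TC_{\Tri}$ and $\TF_{\Tri}$ it is $\Vec_\CC$-like, containing the complex simple, so it is the Galois-twisted invertible module. That module squares to the trivial one, since conjugation has order two, and so does the first kind.

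From this, $\C\boxdot\C$ has central charge $+1$ and its odd part is the trivial module. This should force $\C\boxdot\C\simeq\TC_\HH^f$ rather than $\TC_{\Tri}$, so $\C\boxdot\C$ is the identity. With four elements, each of order at most two, the group is $(\ZZ/2\ZZ)^2$.

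\emph{Embeddings.} Lemma \ref{lem:Don't worry about the embeddings} already covers $\DD=\HH$. The argument closing the proof of Theorem \ref{thm:Mext(sVec_R) is Klein-four} then shows that different embeddings into the same underlying category give the same class.

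\emph{Expected obstacle.} The main difficulty is the Picard-group bookkeeping: identifying the odd components of the four categories as $\sVec_\HH$-module categories, and showing that the product of two triumvirate components gives the component of $\TC_\HH^f$ rather than that of $\TC_{\Tri}$. If the $\Pic(\sVec_\HH)$ computation is awkward, I would instead complexify. The complexification has odd part equal to that of $\TC$ or $\TF$, and I would track the descent data $T$ from Proposition \ref{prop:Classification of real forms of TC and 3F}. Swap composed with swap is isomorphic to the identity, and the $\mu$ classes add, which identifies the square as the form with $T=\id$ and fermion quaternionic, namely $\TC_\HH^f$.
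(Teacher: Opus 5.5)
Your proposal follows the paper's proof. You restrict Theorem~\ref{thm:List of all ndBFCs over R of dim=4} to the four forms containing a quaternionic fermion. You use the $\ZZ/2\ZZ$-grading to reduce $\boxdot$ to the product of odd components in $\Pic(\sVec_\HH)$, with the triumvirates carrying the order-two class of $\Vec_\CC$. You separate $\TC$ from $\TF$ by the central-charge homomorphism of Lemma~\ref{lem:Central charge for MNEs}, and you handle embeddings exactly as in the $\sVec_\RR$ case.

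One step needs correcting. Your reason for $\Z(\sVec_\HH)\not\simeq\TC_{\Tri}$ is false as a general principle: a category with real unit and no Galois-nontrivial objects can have complex simples in its center. For example, the quaternionic $Y\in\Q_+$ carries a half-braiding whose endomorphism algebra is $\CC$, and $\Z(\Vec_\RR^\omega(\ZZ/2\ZZ))\simeq\DS_\RR$ contains the complex simple $\sigma\oplus\overline\sigma$. The step is also unnecessary. The unit $e$ satisfies $e=e\boxdot e$, and your own self-product argument shows that any $\C\boxdot\C$ has trivial odd class and $c=+1$. Hence the underlying category of $e$ is $\TC_\HH^f$, with no independent center computation. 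This is effectively how the paper identifies the unit; it afterwards uses this theorem to deduce $\Z(\Q_-)\simeq\TC_\HH^f$. The same observation is what licenses applying Lemma~\ref{lem:Don't worry about the embeddings} to embeddings into $\TC_\HH^f$.
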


\begin{proof}
    The argument is similar to that of Theorem \ref{thm:Mext(sVec_R) is Klein-four}.
    The list of Theorem \ref{thm:List of all ndBFCs over R of dim=4} shows that the only categories that extend $\sVec_\HH$ are $\TC_\HH^f$, $\TC_{\Tri}$, $\TF_\HH$, and $\TF_{\Tri}$.
    The fact that these categories are $\ZZ/2\ZZ$-graded means that we can argue, as before, by looking at the product in $\Pic(\sVec_\HH)$.

    The nontrivial component of the triumvirate categories $\TC_{\Tri}$ and $\TF_{\Tri}$ is equivalent to $\Vec_\CC$.
    This module has order two in $\Pic(\sVec_\HH)$, so the product of any two triumvirates must have two real simples and two quaternionic simples.
    The $\TF$ categories have central charge $-1$, so we obtain $(\ZZ/2\ZZ)^2$ fusion rules, with $\TC_\HH^f$ playing the role of the identity.
    The embedding concerns are resolved in the same way as in the $\sVec_\RR$ case.
\end{proof}

\section{Classification of centers of \texorpdfstring{$\FPdim=$}{dimension} 4}\label{sec:Classification of centers}

Now that we have classified the $\Mext$ groups, we would like to investigate the maps
\begin{equation}
    \begin{aligned}
        U_{\DD}:\Mext(\sVec_{\DD})&\to\ker\big(\Witt(\Vec_\RR)\to\Witt(\sVec_{\DD})\big)\\
        (\sVec_{\DD}\hookrightarrow\B)\hspace{1mm}&\mapsto\hspace{9mm}[\B]
    \end{aligned} \hspace{5mm}\text{for }\DD=\RR,\HH\;.
\end{equation}
The letter $U$ stands for underlying, as is common for maps that forget structure.
The complex version of this map was described in \cite[Proposition 5.14]{MR3022755} and was later generalized by Lan Kong and Wen in \cite[Proposition 5.15]{MR3613518}.
We will give more intuition for these maps $U_\DD$ in Section \ref{sec:Mext groups as homotopy fibers}.
For now, we turn our attention to the kernel of $U_\DD$, which is to say, those MNEs whose underlying category is a Drinfeld center.

\subsection{Computing Drinfeld centers with the help of class functions}

Computation of Drinfeld centers can be a challenge, when working over $\RR$.
For those familiar with fusion categories over $\CC$, the presence of non-split simple objects can cause counterintuitive things to happen, as the following example demonstrates.

\begin{example}\label{eg:Q_+ is a monoidal real form of Sem}
    The category $\Q_+:=\C_{\HH}(1,\chi,+1/2)=\langle\1,Y\rangle_{\oplus}$ is a real form of $\Vec_\CC^\omega(\ZZ/2\ZZ)$.
    This category appears to be similar to $\sVec_\HH$, but its associator differs by a sign.
    It does not admit a braiding, because that would imply that $\overline{\Q_+}\simeq\Sem$, and we already saw in Example \ref{eg:Semion has no braided real forms} that $\Sem$ has no braided real forms.

    Having a braiding is equivalent to admitting a fully faithful monoidal embedding $\Q_+\hookrightarrow\Z(\Q_+)$.
    It turns out that $Y$ does admit a half-braiding $\gamma_i:Y\otimes Y\to Y\otimes Y$ that acts by $i\in\End(Y)$, but there is still no embedding.
    Morphisms in $\Z(\Q_+)$ must commute with the half-braidings, and so $\End(Y,\gamma_i)\cong\CC$ consist of only those quaternions that commute with $i$.
    Since there is no algebra map $\HH\to\CC$, there can be no embedding $\Q_+\hookrightarrow\Z(\Q_+)$, despite all simple objects admitting half-braidings.
\end{example}

Because of these subtleties, this section aims to introduce a technique that will be helpful for identifying centers more consistently.

As was noted in \cite[Proposition 4.2]{MR4806973}, Galois nontrivial objects are \emph{unbraidable}.
This means that they cannot even be subobjects of objects with a half-braiding.
Despite this, they do have an effect on the possible half-braidings of other objects.

At first pass, we can say the following:
\begin{lemma}[{cf. \cite[the proof of Theorem A.1.1]{MR4934638}}]\label{lem:Centers of CG cats are real forms of C_0}
    For any fusion category $\C$ over $\RR$ that is faithfully Galois graded $\C=\C_0\oplus\C_1$, $\Z(\C)$ must be a real form of $\Z(\C_0)$.
\end{lemma}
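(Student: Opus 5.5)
We need to show that $\Z(\C)$ is a real form of $\Z(\C_0)$, i.e.\ that $\overline{\Z(\C)}\simeq\Z(\C_0)$ as braided categories over $\CC$. The plan is to pass to the complexification, where extension of scalars commutes with taking centers, so $\overline{\Z(\C)}\simeq\Z(\overline\C)$. It therefore suffices to show $\Z(\overline\C)\simeq\Z(\C_0)$ braided-equivalently.

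First we identify the structure of $\C$ and $\overline\C$. A faithful Galois grading means $\Omega\C=\CC$, with $\C_0$ the Galois trivial objects and $\C_1$ the Galois nontrivial ones. Then $\C_0$ is an honest fusion category over $\CC$, since left and right $\CC$-actions agree on it. Because $\Omega\C\cong\CC$, the complexification $\overline\C=\Vec_\CC\boxtimes_\RR\C$ is multifusion, and $\CC\otimes_\RR\CC\cong\CC\oplus\CC$ splits its unit into two summands $\1_+\oplus\1_-$. Write $\overline\C=\bigoplus_{\epsilon,\epsilon'}\1_\epsilon\otimes\overline\C\otimes\1_{\epsilon'}$.

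The key identification is that the diagonal blocks are copies of $\C_0$ and its complex conjugate $\C_0^{\sigma}$, while the off-diagonal blocks come from $\C_1$. Concretely, for an object $X$ the idempotents of $\CC\otimes_\RR\CC$ acting on the left and right pick out the eigenspaces where the left $\Omega\C$-action matches, or is conjugate to, the right one. Galois trivial objects therefore land on the diagonal and Galois nontrivial objects off the diagonal. Faithfulness of the grading, i.e.\ $\C_1\neq0$, then says that $\overline\C$ is an indecomposable multifusion category whose $2\times2$ block matrix has nonzero off-diagonal entries. In particular, $\1_+\overline\C\1_+\simeq\C_0$.

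The standard fact that the center of an indecomposable multifusion category is equivalent to the center of any of its diagonal corners (\cite[Chapter 7--8]{MR3242743}; the corner is Morita equivalent to the whole category via the bimodule $\1_+\overline\C$) then gives
\[\Z(\overline\C)\simeq\Z(\1_+\overline\C\1_+)\simeq\Z(\C_0)\]
as braided categories. Combined with $\overline{\Z(\C)}\simeq\Z(\overline\C)$, this shows $\Z(\C)$ is a real form of $\Z(\C_0)$. As a sanity check, the hypothesis $\Omega\Z\C=\RR$ needed for a fusion real form holds, as noted in the Galois descent subsection.

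The main obstacle is the block decomposition step: checking carefully that Galois nontrivial objects give exactly the off-diagonal components, and that the diagonal corner is $\C_0$ itself rather than some twisted version. If the direct bookkeeping is awkward, I would instead use the $\C_0$-bimodule category $\C_1$. It is invertible because the grading is faithful and $\C_1\otimes\C_1\subset\C_0$, so \cite{MR2609644}-style arguments realize $\overline\C$ as the linking category of the Morita autoequivalence $\overline{\C_1}$. Morita invariance of the center then finishes the argument.
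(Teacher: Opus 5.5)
Your proposal is correct and follows essentially the same route as the paper. Both arguments complexify and split $\overline\C$ into a $2\times2$ block matrix using the idempotents of $\CC\otimes_\RR\CC\cong\CC\oplus\CC$, with $\C_0$ on the diagonal and $\C_1$ off the diagonal. Both then invoke the fact that the center of an indecomposable multifusion category agrees with that of any diagonal corner, together with $\Z(\overline\C)\simeq\overline{\Z(\C)}$.
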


\begin{proof}
    The complexification of a complex Galois graded fusion category must be indecomposable multifusion.
    Because $\CC\otimes_\RR\CC\cong\CC\oplus\CC$, every simple object $X$ splits into two simple objects $pX$ and $qX$ upon complexification, and we can write $p=p\1$ and $q=q\1$.
    The two summands correspond to the identity and complex conjugation via the rules
    \[\id_p\otimes (\id_X\boxtimes\lambda)=\id_p\otimes (\lambda\id_X\boxtimes1)\hspace{3mm}\text{and}\hspace{3mm}\id_q\otimes (\id_X\boxtimes\lambda)=\id_q\otimes (\overline\lambda\id_X\boxtimes1)\;.\]
    It follows that the matrix summands of $\overline\C$ are
    \[\overline\C\simeq\begin{bmatrix}
        p\overline{\C_0}p & p\overline{\C_1}q \\
        q\overline{\C_1}p & q\overline{\C_0}q
    \end{bmatrix}\,.\]
    The center of any indecomposable multifusion category over $\CC$ is braided equivalent to the center of any of its diagonal summands.
    Clearly $\C_0\simeq p\overline{\C_0}p$ as complex fusion categories, so the center of $\C_0$ must be equivalent to $\Z(\overline\C)\simeq\overline{\Z(\C)}$.
\end{proof}

Crucially, this lemma does not tell us \emph{which} real form the center is.
One tool that can help distinguish between possibilities is an isomorphism described by Shimizu in \cite{MR3631720}.
Let $F:\Z(\C)\to\C$ be the forgetful functor.
It has a right adjoint $R:\C\to\Z(\C)$, which preserves algebra objects, because $F$ is monoidal.
The algebra object $FR(\1)$ is a canonical algebra associated to $\C$, and it satisfies $\FPdim(FR(\1))=\FPdim(\C)$.

The adjunction isomorphism 
\begin{equation}
    \C\big(FR(\1),\1\big)\cong\Z(\C)\big(R(\1),R(\1)\big)
\end{equation}
allows for an algebra structure on the hom space $\C(FR(\1),\1)$ via transport of structure.
This algebra is called the algebra of class functions, and is denoted $CF(\C)$.
Shimizu uses a pivotal structure to construct an algebra embedding $\mathrm{ch}:\Gr(\C)\hookrightarrow\CF(\C)$ from the Grothendieck ring, called the internal character.

\begin{proposition}[{cf. \cite[Corollary 4.3]{MR3631720}}]\label{prop:Grothendieck ring isomorphism}
    If $\C$ is a pivotal fusion category over $\RR$, then the internal character $\mathrm{ch}:\Gr_{\RR}(\C)\to\CF(\C)$ is an isomorphism from the real Grothendieck ring.
    If $\Omega\C\cong\mathbb C$, then $\mathrm{ch}:\Gr_{\CC,\text{Gal}}(\C)\rightarrow\CF(\C)$ is an isomorphism from a version of the complex Grothendieck ring, where Galois nontrivial simple objects $X\in\C$ correspond to generators with the property that $[X]\lambda=\overline\lambda[X]$.
\end{proposition}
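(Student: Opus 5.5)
We transfer Shimizu's result \cite[Corollary 4.3]{MR3631720} from the algebraically closed setting to $\RR$, following his argument. Over any field, the internal character $\mathrm{ch}$ is defined from a pivotal structure. For each object $X$, $\mathrm{ch}(X)\in\C(FR(\1),\1)$ is the morphism corresponding under adjunction to the partial trace of the half-braiding on $R(\1)$ restricted to $X$. Equivalently, writing $FR(\1)\cong\int^{X}X\otimes X^*$ as a coend, $\mathrm{ch}(X)$ is the map out of the $X$-component given by the pivotal evaluation. Shimizu's proof that $\mathrm{ch}$ is an algebra homomorphism uses only monoidality of $F$, the adjunction, and pivotality, so it carries over verbatim to $\RR$-linear categories. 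The work is therefore in proving injectivity and surjectivity, which we do by comparing dimensions.

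First, we compute $\dim_\RR\CF(\C)$. By semisimplicity, $FR(\1)\cong\bigoplus_{X}X\otimes_{\End(X)}X^*$, where $X$ ranges over simple objects. This is the nonsplit analogue of $\bigoplus X\otimes X^*$ and is consistent with Proposition \ref{prop:Each simple contributes at least 1 to FPdim}. Hence
\[\C(FR(\1),\1)\cong\bigoplus_X\Hom_{\End(X)\text{-}\End(X)}\big(\End(X),\Omega\C\big),\]
and each summand is identified with the space of trace-like functionals on $\End(X)$, i.e.\ the linear dual of $\End(X)/[\End(X),\End(X)]$ taken over the appropriate base. When $\Omega\C=\RR$, the reduced-trace functional spans this space for $\End(X)=\RR$ or $\HH$. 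For $\End(X)=\CC$, however, the space is $2$-dimensional over $\RR$, so a naive count gives more than one dimension per simple. We must therefore be careful to use the correct coend over $\Omega\C$. I expect this bookkeeping to be the main obstacle, because of exactly this split versus non-split issue.

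The way out is to note that the relevant hom space is that of $\Z(\C)$, namely $\End(R(\1))$, and to compute it after complexifying. We have $\overline{\Z(\C)}\simeq\Z(\overline\C)$, and $\CF$ commutes with extension of scalars: $\CC\otimes_\RR\CF(\C)\cong\CF(\overline\C)$. Similarly, $\CC\otimes_\RR\Gr_\RR(\C)$ should be identified with the appropriate part of $\Gr(\overline\C)$. Since $\mathrm{ch}$ is compatible with base change, it then suffices to show that $\CC\otimes\mathrm{ch}$ is an isomorphism. In the case $\Omega\C=\RR$, $\overline\C$ is fusion over $\CC$ with an induced pivotal structure, so Shimizu's theorem applies directly. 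The remaining point is that $\CC\otimes_\RR\Gr_\RR(\C)$ is cut out of $\Gr_\CC(\overline\C)$ as the Galois-invariant part, while $\CF(\C)$ is the Galois-fixed part of $\CF(\overline\C)$. Galois descent for the $\ZZ/2\ZZ$-action, as in Section 2.2, then shows that $\mathrm{ch}$ restricts to an isomorphism of fixed points.

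In the case $\Omega\C\cong\CC$, $\overline\C$ is indecomposable multifusion with two diagonal summands, as in the proof of Lemma \ref{lem:Centers of CG cats are real forms of C_0}. Shimizu's isomorphism for multifusion categories identifies $\CF(\overline\C)$ with $\Gr_\CC$ of one diagonal block. The Galois-nontrivial simples $X$ of $\C$ contribute off-diagonal pieces $pXq$, and descent turns these into $\CC$-semilinear generators satisfying $[X]\lambda=\overline\lambda[X]$. The algebra structure on $\Gr_{\CC,\mathrm{Gal}}(\C)$ is the skew structure arising this way, and a final dimension match with $\dim_\RR\CF(\C)$ completes the argument.
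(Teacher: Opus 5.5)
\textbf{The descent step fails, and the obstacle you flagged is a real one.} Look at the case $\Omega\C=\RR$. After base change, $\CC\otimes\mathrm{ch}_\C$ is $\mathrm{ch}_{\overline\C}$ precomposed with the map $\CC\otimes\Gr_\RR(\C)\to\CC\otimes\Gr(\overline\C)$, $[X]\mapsto[X\boxtimes\CC]$. The image of that map is the span of the orbit sums of conjugation acting on the simples of $\overline\C$, so it has complex dimension $\#\mathrm{Irr}(\C)$. But $\CC\otimes_\RR\CF(\C)=\CF(\overline\C)$ has dimension $\#\mathrm{Irr}(\overline\C)$.

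You have conflated two actions. $\CC\otimes\Gr_\RR(\C)$ is the invariant part for the \emph{linear} permutation action. $\CF(\C)$, transported through $\mathrm{ch}_{\overline\C}$, is the fixed locus of the \emph{conjugate-linear} involution $\sum c_Y[Y]\mapsto\sum\overline{c_Y}[TY]$. That fixed locus contains $i([Y]-[TY])$ whenever $Y\not\cong TY$. The two agree only when conjugation fixes every simple of $\overline\C$, i.e.\ when $\C$ has no complex simple objects. So base change reproduces your obstruction rather than removing it.

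The obstruction is genuine. Take $\C=\Rep_\RR(\ZZ/3\ZZ)$: it is pivotal, $\Omega\C=\RR$, and its simples are $\1$ and a complex $V$. Then $\dim_\RR\CF(\C)=1+2=3$ (the real-valued class functions), while $\dim_\RR\Gr_\RR(\C)=2$, so $\mathrm{ch}$ is not surjective. The first assertion therefore needs the extra hypothesis that $\C$ has no complex simples. Under that hypothesis, your own direct count (trace-like functionals on $\RR$ or $\HH$ form a line), together with Shimizu's injectivity, finishes the proof without any descent. That is exactly the paper's argument. Its claim that $\C(X\otimes_{\End(X)}X^*,\1)$ is one-dimensional holds for real and quaternionic $X$ but fails for complex $X$. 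This is harmless for the rest of the paper, which only applies the proposition when $\Omega\C\cong\CC$.

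\textbf{The case $\Omega\C\cong\CC$ also needs repair.} $\CF(\overline\C)$ is not $\Gr_\CC$ of one diagonal block. The correct count for the multifusion category $\overline\C$ is $\dim_\CC\CF(\overline\C)=\#\mathrm{Irr}(\overline\C)$, summed over all blocks, diagonal and off-diagonal. For $\C_{\overline\CC}(\ZZ/2\ZZ,\chi)$ this is $6$, matching $M_2(\RR)\oplus\CC$, whereas a diagonal block has rank $2$. The ``final dimension match'' is also never actually computed. If you keep descent here, it does close: each simple $X$ of $\C$ yields the pair $pX,qX$ in $\overline\C$, swapped by conjugation, so the fixed locus has real dimension $2\,\#\mathrm{Irr}(\C)$.

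The paper's direct route is simpler. Every simple $X$ has $\End(X)=\CC$ containing $\Omega\C$ centrally, so $\C(X\otimes_{\End(X)}X^*,\1)$ is a complex line. Hence $\dim_\RR\CF(\C)=2\,\#\mathrm{Irr}(\C)=\dim_\RR\Gr_{\CC,\mathrm{Gal}}(\C)$, and injectivity gives the isomorphism.

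Finally, you assert rather than derive the relation $[X]\lambda=\overline\lambda[X]$. It comes from the following: $\mathrm{ch}(X)\mathrm{ch}(Y)$ factors through $X\otimes Y\otimes Y^*\otimes X^*\to\1$. Scaling the $Y$ factor by $\lambda$ precomposes this map with $\id_X\otimes\lambda\,\id_{Y\otimes Y^*\otimes X^*}$. For Galois-nontrivial $X$ one has $\id_X\otimes\lambda\,\id_Y=\overline\lambda\,(\id_X\otimes\id_Y)$, which gives the twist.
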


\begin{proof}
    The underlying object of the algebra $FR(\1)$ is
    \[FR(\1)\cong\bigoplus_{X}X\otimes_{\End(X)}X\,.\]
    The relative tensor products come from the fact that $R$ can be constructed using an end, and the decomposition into a direct sum is a consequence of $\C$ being semisimple.

    Since Shimizu proves injectivity of $\mathrm{ch}$ more generally, for finite tensor and not just fusion categories, his arguments also use the end formulation, so they apply to our setting.
    In order to prove surjectivity, he specializes to fusion categories in Example 4.4 \emph{loc. cit.}
    The key point is that $\C(X\otimes_{\End(X)}X^*,\1)$ remains one dimensional in our case, and so the map is an isomorphism.

    The morphism $\mathrm{ch}(X\otimes Y)=\mathrm{ch}(X)\mathrm{ch}(Y)$ factors through a map
    \[X\otimes Y\otimes Y^*\otimes X^*\to\1\,.\]
    Scaling $\mathrm{ch}(Y)$ by a complex number $\lambda$ precomposes this map with $\id_X\otimes \lambda\cdot\id_{Y\otimes Y^*\otimes X^*}$.
    If $X$ is Galois nontrivial, then $\id_X\otimes \lambda\cdot\id_Y=\overline{\lambda}\cdot\id_X\otimes \id_Y$, and this produces the desired Galois action.
\end{proof}

\begin{example}\label{eg:Conjugating Ising}
    The category $\C=\C_{\overline{\CC}}(\ZZ/2\ZZ,\chi)$ is a non-split version of a Tambara-Yamagami category that was introduced in \cite{MR5003359}.
    This category has $\Omega\C\cong\CC$, and the simple object $m$ is Galois nontrivial.
    The fusion rules are
    \begin{gather*}
        a\otimes a\cong\1\,,\\
        a\otimes m\cong m\cong m\otimes a\,,\\
        m\otimes m\cong 1\oplus a\,.
    \end{gather*}
    This category admits a pivotal structure, so Proposition  \ref{prop:Grothendieck ring isomorphism} applies.
    To compute in the complex-Galois Grothendieck ring, we will omit brackets for the generators.
    The equation $a^2=1$ tells us that we can build a projection $p=\frac12(1+a)$.
    Writing $1=p+q$, we find that these are central idempotents, so the algebra splits into the images of $p$ and $q$.
    Since $qm=0$ and $qa=-q$, the image of $q$ is isomorphic to $\CC$.
    
    Since $pa=p$, the image of $p$ is generated by $p$ and $pm=m$.
    The fusion rules then imply that
    \[m^2=(1+a)=2p\,.\]
    Thus the element $(\frac{1}{\sqrt2})\cdot m$ squares to $p$ and conjugates scalars.
    This subalgebra can easily be identified as $M_2(\RR)$.
    
    It now follows from Proposition \ref{prop:Grothendieck ring isomorphism} that $\End(R(\1))\cong M_2(\RR)\oplus\CC$.
    Therefore, $\Z(\C)$ contains at least one real simple, and one complex simple.
    Since $\C$ has a Galois nontrivial object, $\Omega\Z\C=\RR$.
    We know that $R(\1)$ contains the unit, because it's an algebra, so the $M_2(\RR)$ summand means that this real simple that we've found is the unit object in $\Z(\C)$.
\end{example}

The drawback of this technique is that it tells us very little.
It describes some simple objects, but not all of them.
It also says nothing about the braiding.
The power lies in the fact that it fills a niche that was unoccupied by our other techniques: it can always be used (assuming $\C$ is pivotal), and it tells us for certain that a given type of simple object exists.

\subsection{Every center of \texorpdfstring{$\FPdim=$}{dimension} 4}

Our minimal nondegenerate extensions all have $\FPdim=4$, and in order to understand $\ker(U_\DD)$, we will need to know which ones are centers.
Our strategy is brute force, case-by-case analysis.
Since 4 is a relatively small number\footnote{...as many important mathematicians have observed.}, case analysis is tractable.

\begin{proposition}[{cf. \cite[Theorem A.1.1]{MR4934638}}]\label{prop:List of all C with dim(ZC)=4}
    Every fusion category $C$ over $\RR$, with the property that $\Omega\Z\C=\RR$ and $\FPdim(\Z(\C))=4$ must be one of the following categories:
    \begin{enumerate}[(I),leftmargin=50mm, labelsep=5mm, itemsep=1mm]
        \item $\Vec_\RR(\ZZ/2\ZZ)$
        \item $\C_\HH(1,\chi,-1/2)=:\Q_-$
        \item $\Vec_\RR^\omega(\ZZ/2\ZZ)$
        \item $\C_\HH(1,\chi,+1/2)=:\Q_+$
        \item $Vec_\CC\big((\ZZ/2\ZZ)^2_{\text{Gal}}\big)$
        \item $Vec_\CC\big((\ZZ/4\ZZ)_{\text{Gal}}\big)$
        \item $\C_{\overline\CC}(\ZZ/2\ZZ,\chi)$
        \item $Vec_\CC^\omega\big((\ZZ/2\ZZ)^2_{\text{Gal}}\big)$.
    \end{enumerate}
    Here, $\Vec_{\KK}^\omega(G)$ denotes $G$-graded vector spaces over $\KK$, with associator given by the cocycle $\omega\in Z^3(G;\CC^\times)$.
    The subscript Gal indicates that there is a Galois nontrivial simple object, and that $\omega$, if present, has twisted $\CC^\times$ coefficients.
    In all cases where an $\omega$ is indicated, $H^3(G;\CC^\times)\cong\ZZ/2\ZZ$, so the presence of $\omega$ indicates a cocycle in this unique nontrivial class.
    The categories in items (II), (IV), and (VII) are non-split Tambara-Yamagami categories, whose classification and notation can be found in \cite{MR5003359}.
\end{proposition}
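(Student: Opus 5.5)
Since $\FPdim(\Z(\C))=\FPdim(\C)^2$ (with the appropriate normalization when $\Omega\C=\CC$), the first step is to reduce the hypothesis to a bound on $\C$ itself. I would split into two cases according to $\Omega\C\in\{\RR,\CC\}$. If $\Omega\C=\RR$, then $\FPdim(\C)=2$. If $\Omega\C=\CC$, the hypothesis $\Omega\Z\C=\RR$ forces $\C$ to have Galois nontrivial objects, so $\C=\C_0\oplus\C_1$ is faithfully Galois graded. By Lemma \ref{lem:Centers of CG cats are real forms of C_0}, $\Z(\C)$ is a real form of $\Z(\C_0)$, where $\C_0$ is a complex fusion category. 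Hence $\FPdim(\C_0)=2$, so $\C_0\simeq\Vec_\CC^{\omega}(\ZZ/2\ZZ)$ for $\omega$ trivial or not, and $\FPdim(\C)=4$ as a complex category.

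\emph{Case $\Omega\C=\RR$, $\FPdim(\C)=2$.} By Proposition \ref{prop:Each simple contributes at least 1 to FPdim}, each simple contributes an algebraic integer $\geq1$. So either $\C=\langle\1\rangle$, which is excluded since then $\FPdim\Z=1$, or there are exactly two simples $\1,X$ with $\FPdim(X)^2/\dim\End(X)=1$. By Example \ref{eg:Complex simples contribute more}, $X$ is not complex. If $X$ is real, $\FPdim(X)=1$, so $\C$ is pointed with group $\ZZ/2\ZZ$; then $H^3(\ZZ/2\ZZ;\RR^\times)=\ZZ/2\ZZ$ gives items (I) and (III). If $X$ is quaternionic, $\FPdim(X)=2$, and rigidity gives $X\otimes X\cong4\cdot\1$. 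So $X$ is quasi-invertible and $\C$ is a non-split Tambara--Yamagami category with trivial group. By the classification in \cite{MR5003359}, these are $\C_\HH(1,\chi,\pm1/2)$, giving items (II) and (IV).

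\emph{Case $\Omega\C=\CC$.} Here $\C_0$ is pointed of rank two over $\CC$, and $\C_1$ is an invertible $\C_0$-bimodule category of rank at most two, with $\FPdim$ contribution $2$. The simples of $\C_1$ are either two invertibles or one object $m$ with $m\otimes m^*\cong\1\oplus a$.

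In the pointed subcase, $\C$ is $\Vec_\CC^\omega(G_{\mathrm{Gal}})$ for $G$ an order-4 group extension $\ZZ/2\ZZ\to G\to\ZZ/2\ZZ$ surjecting onto $\mathrm{Gal}(\CC/\RR)$. The group $G$ is $(\ZZ/2\ZZ)^2$ or $\ZZ/4\ZZ$, and $\omega$ ranges over $H^3$ with twisted coefficients. I would compute these cohomology groups via the Lyndon--Hochschild--Serre spectral sequence, or directly, showing that $H^3((\ZZ/2\ZZ)^2;\CC^\times_{\mathrm{tw}})\cong\ZZ/2\ZZ$ and $H^3(\ZZ/4\ZZ;\CC^\times_{\mathrm{tw}})=0$. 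These give (V), (VIII) and (VI). I would also check that each admissible $\omega$ restricts correctly to $\C_0$.

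In the non-pointed subcase, $\C$ is a Galois-twisted Tambara--Yamagami category over $\CC$ with group $\ZZ/2\ZZ$. The classification of \cite{MR5003359} should leave only $\C_{\overline\CC}(\ZZ/2\ZZ,\chi)$, which is item (VII). Finally, I would remark that all eight categories genuinely occur and are pairwise inequivalent, distinguished by fusion rules, endomorphism algebras and associators.

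The main obstacle is the Galois-graded case. Two points there need care: completeness of the list of possible $\C_1$, and the twisted cohomology computation, in particular ruling out extra twisted Tambara--Yamagami forms and extra associators for $\ZZ/4\ZZ$. For these I would lean directly on the non-split Tambara--Yamagami classification of \cite{MR5003359} and on the existing argument of \cite[Theorem A.1.1]{MR4934638}, adapting it where needed.
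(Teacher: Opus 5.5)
Your outline is the paper's. You split on $\Omega\C$. In the real-unit case you use that each simple contributes at least $1$ to $\FPdim(\C)=2$, exclude a complex $X$, and read off (I)--(IV) from the two associators in the real and quaternionic cases. In the complex-unit case you use the faithful Galois grading and $\FPdim(\C_0)=2$, treat $\C_1$ as an invertible bimodule, and invoke the non-split Tambara--Yamagami classification for the non-pointed subcase. Getting $\FPdim(\C_0)=2$ from the lemma on centers of Galois-graded categories, rather than citing the external theorem, is fine.

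The step that fails is the pointed Galois-graded subcase. $H^3((\ZZ/2\ZZ)^2;\CC^\times_{tw})$ is not $\ZZ/2\ZZ$, so the computation you plan cannot return only (V), (VI), (VIII).

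\emph{A nontrivial cocycle that is trivial on $\C_0$.} Write $G=\langle a\rangle\times\langle g\rangle$ with $g$ Galois nontrivial, and set $\omega'(a^ig^j,a^kg^\ell,a^mg^n)=(-1)^{i\ell n}$. This is the cup product $\alpha\beta^2$ of $\mathbb{F}_2$-valued $1$-cocycles. Its values $\pm1$ are fixed by conjugation, so it is a twisted $3$-cocycle, and it is trivial on $\C_0=\langle\1,a\rangle$.

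\emph{Why it is not a coboundary.} A half-braiding on $a$ by scalars $c_x$ must satisfy $c_{xy}=x(c_y)\,c_x\,\omega'(a,x,y)^{\pm1}$, because the factors $\omega'(x,a,y)$ and $\omega'(x,y,a)$ are always $1$. At $x=y=g$ this reads $1=-|c_g|^2$, which is impossible. Hence $a$ carries no one-dimensional half-braiding, and the simples of the center lying over $a$ are quaternionic. The center is therefore $\TC_\HH^f$, whereas the center of (V) is $\TC_\RR$.

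\emph{The correct group.} The class $(-1)^{ikm}$ restricts nontrivially to $\C_0$. Together with $\omega'$ this gives $H^3(G;\CC^\times_{tw})\cong(\ZZ/2\ZZ)^2$. Equivalently, in the Lyndon--Hochschild--Serre spectral sequence for $\langle a\rangle\to G\to\langle g\rangle$, both $E_\infty^{0,3}$ and $E_\infty^{2,1}=H^2(\ZZ/2\ZZ;\widehat{\ZZ/2\ZZ})\cong\ZZ/2\ZZ$ survive.

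\emph{The missing category.} $\Vec_\CC^{\omega'}((\ZZ/2\ZZ)^2_{\mathrm{Gal}})$ satisfies the hypotheses: $\Omega\Z\C=\RR$, and its center is a real form of $\TC$. It is none of (I)--(VIII).
\begin{itemize}
\item It is pointed with $g\otimes g\cong\1$, so it is not (VI) or (VII).
\item Its Galois-trivial part is untwisted, so it is not (VIII).
\item Its center differs from that of (V), so it is not (V).
\item It has $\Omega\C=\CC$, so it is none of (I)--(IV).
\end{itemize}

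The paper's own proof has the same gap. It asserts the same value of $H^3$. Its appeal to $H^3(\ZZ/2\ZZ;\CC^\times_{tw})=0$ in the extension theory also overlooks the $H^2(\ZZ/2\ZZ;\mathrm{Inv}\,\Z(\C_0))$-torsor. That torsor can change the associator involving $\C_1\otimes\C_1$ without changing $\C_0$ or the bimodule $\C_1$, and $\omega'$ is such a change. The list needs a ninth entry, so the statement as written cannot be proved by your route or any other. Since that entry's center is $\TC_\HH^f$, the later conclusions about which real forms are Drinfeld centers are unaffected.
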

An argument for exhaustiveness of this list is given in \cite{MR4934638}, but we include a proof here for completeness.

\begin{proof}
    Let $\C$ be one such category, and suppose at first that $\Omega\C=\RR$.
    It follows that $\FPdim(\C)=2$, so Proposition \ref{prop:Each simple contributes at least 1 to FPdim} implies that each of these categories contains exactly two simple objects, one of which is the unit $\1$.
    The other simple, say $X$, must be self-dual, and must satisfy
    \[1\;=\;\frac{\FPdim(X)^2}{\dim_{\RR}\End(X)}\,.\]
    By Example \ref{eg:Complex simples contribute more}, this object $X$ cannot be complex.
    Since $X$ must either be real or quaternionic, $\FPdim(X)$ must be either $1$ or $2$, respectively.
    If $\FPdim(X)=1$, then $X$ is invertible, and different choices of associator give categories (I) and (III).
    If $\FPdim(X)=2$, then $X$ is quasi-invertible, and different choices of associator give categories (II) and (IV).

    Now suppose that $\Omega\C=\CC$.
    Since $\Omega\Z\C=\RR$, $\C$ must have Galois nontrivial objects, and it follows from \cite[Theorem 4.9]{MR4806973} that $\FPdim(\C)=4$.
    Galois nontriviality induces a $\ZZ/2\ZZ$-grading $\C\simeq\C_0\oplus\C_1$, and this implies that $\FPdim(\C_i)=2$ for each component.
    Since all simples in $\C_0$ are Galois trivial, $\C_0$ must be fusion over $\CC$.
    There are only two possibilities: $\C_0=\Vec_{\CC}(\ZZ/2\ZZ)$, and $\C_0=\Vec_{\CC}^\omega(\ZZ/2\ZZ)$.
    The Galois-grading implies that $\C_1$ must be an invertible bimodule for $\C_0$.
    
    When $\C_0=\Vec_{\CC}(\ZZ/2\ZZ)$, there are two possible bimodules: $\C_1\simeq\C_0$, and $\C_1\simeq\Vec_{\CC}$.
    When $\C_1\simeq\C_0$, the category will be graded by a group of order 4, and this produces categories (V) and (VI).
    Note that twisted coefficients give $H^3(\ZZ/2\ZZ;\CC^\times_\sim)=1$.
    The extension theory of \cite{MR2677836} implies that this cohomology group controls the associators that can extend our trivial associator on $\C_0=\Vec_{\CC}(\ZZ/2\ZZ)$, so these trivial associators must be the only ones.

    When $\C_0=\Vec_{\CC}(\ZZ/2\ZZ)$ and $\C_1\simeq\Vec_\CC$, the category must have Tambara-Yamagami fusion rules, and the classification of \cite[Theorem 7.1]{MR5003359} shows that (VII) is the only possibility.

    In the case where $\C_0=\Vec_{\CC}^\omega(\ZZ/2\ZZ)$, the only invertible bimodule is the trivial bimodule, and so $\C_1\simeq\C_0$.
    The group $\ZZ/4\ZZ$ has no cocycles that restrict to become $\omega$.
    In fact, conjugating coefficients imply that $H^3(\ZZ/4\ZZ;\CC^\times_\sim)=1$.
    Thus the only possibility is a $(\ZZ/2\ZZ)^2$-grading.
    Since $\ZZ/2\ZZ$-graded extensions of $\Vec_{\CC}^\omega(\ZZ/2\ZZ)$ have a unique associator, it must correspond to the unique nontrivial class in $H^3((\ZZ/2\ZZ)^2;\CC^\times_\sim)\cong\ZZ/2\ZZ$.
    This produces category (VIII), and exhausts all possible cases.
\end{proof}

We will compute the centers of these categories in batches.
The first batch will be those with real unit, corresponding to categories (I)-(IV).
\begin{lemma}\label{lem:The centers of (I)-(IV)}\leavevmode
    \begin{enumerate}
        \item The center of (I) is $\TC_\RR$.
        \item The center of (II) is $\TC_\HH^f$.
        \item The center of (III) and (IV) is $\DS_\RR$.
    \end{enumerate}
\end{lemma}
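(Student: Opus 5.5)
For all four categories (I)--(IV) we have $\Omega\C=\RR$ and $\FPdim(\C)=2$. Complexification commutes with taking centers, so $\overline{\Z(\C)}\simeq\Z(\overline\C)$. The plan is therefore to identify $\overline\C$ first, and then determine which real form from Theorem \ref{thm:List of all ndBFCs over R of dim=4} the center must be.

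The complexifications are as follows. For (I), $\overline\C=\Vec_\CC(\ZZ/2\ZZ)$. For (III), $\overline\C=\Vec^\omega_\CC(\ZZ/2\ZZ)$. For (II), $\overline{\Q_-}\simeq\sVec_\CC\simeq\Vec_\CC(\ZZ/2\ZZ)$ as a fusion category, since $\Q_-$ is the underlying fusion category of $\sVec_\HH$. For (IV), $\overline{\Q_+}\simeq\Vec_\CC^\omega(\ZZ/2\ZZ)$ by Example \ref{eg:Q_+ is a monoidal real form of Sem}. Consequently the centers of (I) and (II) are real forms of $\TC$, and the centers of (III) and (IV) are real forms of $\DS$.

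For (III) and (IV) we are then done by Proposition \ref{prop:DS has a unique real form}, since $\DS$ has a unique real form $\DS_\RR$. The real form must also have real unit, because $\Omega\Z\C=\RR$.

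For (I), $\Z(\Vec_\RR(\ZZ/2\ZZ))$ is by definition $\TC_\RR$, as set up in the proof of Proposition \ref{prop:Classification of real forms of TC and 3F}.

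Case (II) is the main step. The center is one of $\TC_\RR$, $\TC_\HH^f$, $\TC_\HH^b$, $\TC_{\Tri}$, and these must be told apart.
\begin{itemize}
\item First, $\Q_-$ is the underlying category of the symmetric category $\sVec_\HH$. Its braiding gives a fully faithful braided embedding $\sVec_\HH\hookrightarrow\Z(\Q_-)$, so the center contains a quaternionic fermion. Among the four forms, only $\TC_\HH^f$ and $\TC_{\Tri}$ contain a quaternionic fermion.
\item To rule out $\TC_{\Tri}$, we use class functions (Proposition \ref{prop:Grothendieck ring isomorphism}). The real Grothendieck ring of $\Q_-$ is $\RR[Y]/(Y^2-4)\cong\RR\oplus\RR$, so $\End(R(\1))\cong\RR\oplus\RR$.
\item Then $R(\1)$ decomposes as a sum of two nonisomorphic real simples, each with multiplicity one. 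In $\TC_{\Tri}$ there is only one real simple, the unit, so this is impossible. Hence $\Z(\Q_-)\simeq\TC_\HH^f$.
\end{itemize}

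As an alternative check, one could apply the same class-function argument to (I): there $\End(R(\1))\cong\RR^2$, consistent with $\TC_\RR$, which has four real simples. An independent argument for (II) is that $\Q_-=\sVec_\HH$ is symmetric, so $\Z(\sVec_\HH)$ is the identity of $\Mext(\sVec_\HH)$. Combined with Theorem \ref{thm:Mext(sVec_H) is Klein-four}, which names $\TC_\HH^f$ as the identity, this confirms the answer, provided the identification of the identity there did not itself rely on this lemma.

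The main obstacle is the distinction between $\TC_\HH^f$ and $\TC_{\Tri}$. Everything hinges on whether $\End(R(\1))$ is $\RR\oplus\RR$ rather than something like $M_2(\RR)$, which requires the pivotal structure and that the class-function isomorphism is correctly applied in the non-split setting.
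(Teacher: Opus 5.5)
Your proposal is correct. For (III) and (IV) it is exactly the paper's argument, but for (II) you take a genuinely different route.

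\textbf{Case (II).} The paper does not analyze $\Z(\Q_-)$ directly. It notes that $\Z(\sVec_\HH)$ is the unit of $\Mext(\sVec_\HH)$, and that Theorem \ref{thm:Mext(sVec_H) is Klein-four} already identified that unit as $\TC_\HH^f$.

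You need not worry about circularity. That theorem pins down its unit using only two inputs: the order-two Picard argument for the triumvirates (so $\TC_{\Tri}\boxdot\TC_{\Tri}$ has two real and two quaternionic simples and cannot be $\TC_{\Tri}$), and the central charge.

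Your argument is that the braided embedding $\sVec_\HH\hookrightarrow\Z(\Q_-)$ leaves only $\TC_\HH^f$ and $\TC_{\Tri}$, and that a second real simple summand of $R(\1)$ excludes $\TC_{\Tri}$. This is independent of the Mext classification and its Picard-group input, and it matches the technique the paper uses for (V)--(VII). The paper's version is a one-liner once the theorem is available.

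\textbf{Your stated obstacle can be bypassed.} You do not need the algebra structure on class functions, only dimensions.
\begin{itemize}
\item By adjunction, $\dim_\RR\End(R(\1))=\dim_\RR\Hom_{\Q_-}(FR(\1),\1)=2$, because $FR(\1)\cong\1\oplus(Y\otimes_\HH Y^*)\cong2\cdot\1$.
\item Also $\Hom_{\Z(\Q_-)}(\1,R(\1))\cong\Hom_{\Q_-}(\1,\1)=\RR$, so $\1$ occurs in $R(\1)$ exactly once.
\item Hence $R(\1)\cong\1\oplus W$ with $W$ a real simple not isomorphic to $\1$, and $\TC_{\Tri}$ has no such simple.
\end{itemize}
If you keep the class-function version, note that $\Q_-$ is pivotal: the Drinfeld isomorphism of the symmetric category $\sVec_\HH$ is monoidal.

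\textbf{Case (I).} Saying ``by definition'' agrees with the paper's notation $\TC_\RR:=\Z(\Vec_\RR(\ZZ/2\ZZ))$. However, the paper's proof supplies the real content. All four simples are real, because the half-braidings are given by the two characters of $\ZZ/2\ZZ$ on each invertible object.

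This is what matches the center with the all-real form. You use that all-real characterization implicitly when you distinguish the real forms by types of simples in case (II). Your class-function check cannot replace it, since $\End(R(\1))\cong\RR\oplus\RR$ is equally compatible with $\TC_\HH^f$.
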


\begin{proof}
    When the category is pointed, and all the associators are trivial as in case (I), half-braidings are determined by characters.
    The trivial and sign character for $\ZZ/2\ZZ$ can equally be equipped to both invertible objects, so all objects in the center are real, and this is $\TC_\RR$.

    The category (II) is the underlying category of $\sVec_\HH$.
    Theorem \ref{thm:Mext(sVec_H) is Klein-four} already formally identified the unit element of $\Mext(\sVec_\HH)$ as $\TC_\HH^f$, but the unit element of the Mext group is always the Drinfeld center, so $\Z(\Q_-)=\Z(\sVec_\HH)=\TC_\HH^f$.

    The categories (III) and (IV) are the two real forms of $\Vec_{\CC}^\omega(\ZZ/2\ZZ)$, whose center is $\DS$.
    Proposition \ref{prop:DS has a unique real form} implies that the center of each of these categories must be $\DS_\RR$.
\end{proof}

For the second batch, corresponding to categories (V)-(VIII), we can use Lemma \ref{lem:Centers of CG cats are real forms of C_0} to narrow the possible centers to the real forms of their Galois trivial subcategories.

\begin{lemma}\label{lem:Centers of (V)-(VIII)}\leavevmode
    \begin{enumerate}
        \item The center of (V) is $\TC_\RR$.
        \item The center of (VI) is $\TC_\HH^f$.
        \item The center of (VII) is $\TC_{\Tri}$.
        \item The center of (VIII) is $\DS_\RR$.
    \end{enumerate}
\end{lemma}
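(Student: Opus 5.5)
Each of (V)--(VIII) is faithfully Galois graded, so Lemma \ref{lem:Centers of CG cats are real forms of C_0} shows that $\Z(\C)$ is a real form of $\Z(\C_0)$. That is $\TC$ when $\C_0=\Vec_\CC(\ZZ/2\ZZ)$ (cases (V), (VI), (VII)) and $\DS$ when $\C_0=\Vec_\CC^\omega(\ZZ/2\ZZ)$ (case (VIII)). Case (VIII) is then immediate from Proposition \ref{prop:DS has a unique real form}, since $\DS$ has a unique real form. For the other three cases we must decide which of $\TC_\RR$, $\TC_\HH^f$, $\TC_\HH^b$, $\TC_{\Tri}$ occurs. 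The main invariant is the algebra of class functions $\CF(\C)\cong\End(R(\1))$ from Proposition \ref{prop:Grothendieck ring isomorphism}, using that all these categories are pivotal. We compare it with the decomposition of $R(\1)$ in each candidate real form. In every candidate, $FR(\1)$ has $\FPdim=4$, and $R(\1)$ complexifies to the Lagrangian algebra $\1\oplus e$ in $\TC$, where $e$ is one of the bosons.

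For (VII), Example \ref{eg:Conjugating Ising} already gives $\CF\cong M_2(\RR)\oplus\CC$, so $\Z(\C)$ has a complex simple. Only $\TC_{\Tri}$ among the real forms of $\TC$ has a complex simple, so $\Z(\C)=\TC_{\Tri}$. For (V) and (VI), the Galois Grothendieck ring is $\CC[G]$ twisted by the Galois action of the nontrivial Galois class, with $G=(\ZZ/2\ZZ)^2$ or $\ZZ/4\ZZ$.

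In case (V), take generators $a$ (Galois trivial) and $g$ (Galois nontrivial) with $g^2=1$. The idempotents $\frac12(1\pm a)$ are central, and each summand is a crossed product $\CC\rtimes\ZZ/2\ZZ$ with $g^2=1$, that is $M_2(\RR)$. So $\CF\cong M_2(\RR)^2$. In case (VI), take $g$ with $g^2=a$. On the summand where $a=-1$ we get $g^2=-1$ with $g$ conjugating $\CC$, which is $\HH$. So $\CF\cong M_2(\RR)\oplus\HH$.

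Now interpret these. A summand $M_2(\RR)$ means that $R(\1)$ contains two copies of a real simple, and an $\HH$ summand means one copy of a quaternionic simple. Both $\TC_\RR$ and $\TC_\HH^b$ have no complex simples, so I must separate them. In (V), both simple summands of $R(\1)$ are real, so the boson $e$ is real; in $\TC_\HH^b$ both bosons are quaternionic, so (V) gives $\TC_\RR$. In (VI) the nonunit summand is quaternionic, which rules out $\TC_\RR$ and leaves $\TC_\HH^f$ or $\TC_\HH^b$.

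The main obstacle is separating $\TC_\HH^f$ from $\TC_\HH^b$ in case (VI). Both have a quaternionic boson that could sit in $R(\1)$, so the class function algebra alone does not decide. My first attempt is to find a copy of $\sVec_\HH$ inside $\Z(\C)$, which forces the fermion to be quaternionic, hence $\TC_\HH^f$. Concretely, I would look for the fermion as an object $F(X)$ with $X=g\oplus g^3$ built from the nontrivial component, carrying a half-braiding. Restricting this half-braiding to $\C_0$ should give the sign character on $a$, and its Galois-twisted endomorphisms should form $\HH$, for the same reason $g^2=a$ produced $\HH$ above.

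As a fallback, I would show that $\C$ is Morita equivalent to $\Q_-$, so that $\Z(\C)\simeq\Z(\Q_-)=\TC_\HH^f$ by Lemma \ref{lem:The centers of (I)-(IV)}. For this I would exhibit a $\C$-module category with one simple object whose dual is $\sVec_\HH$, for example $\C$ acting on $\Vec_\CC$ via the $\ZZ/4\ZZ$-grading with $\Omega=\RR$. Similarly, (V) should be Morita equivalent to $\Vec_\RR(\ZZ/2\ZZ)$, which gives an independent check of the first case.
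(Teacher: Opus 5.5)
\textbf{The proposal has genuine gaps in cases (V) and (VI).} Your reductions for (VII) and (VIII) match the paper, and so do your class-function computations ($\CF\cong M_2(\RR)^{2}$ for (V), $\CF\cong M_2(\RR)\oplus\HH$ for (VI)).

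\textbf{Case (V).} Knowing that $R(\1)\cong 2\cdot\1\oplus 2\cdot e$ with $e$ a real boson rules out $\TC_\HH^b$ and $\TC_{\Tri}$, but not $\TC_\HH^f$. In $\TC_\HH^f$ one boson is real; only the other boson and the fermion are quaternionic. It also has no complex simple, and the algebra $(\1\oplus e)\otimes\CC$ in it has endomorphism algebra $M_2(\RR)\oplus M_2(\RR)$. So the class-function algebra alone cannot finish (V).

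The paper supplies the missing step. The two character-type half-braidings on $\1$ are also placed on the Galois-trivial simple $a$. Naturality against the Galois-nontrivial generator then forces the endomorphisms of these objects to be real. This gives four real simples, hence $\TC_\RR$. By contrast, in $\TC_\HH^f$ both simples lying over $a$ would be quaternionic. Your Morita equivalence with $\Vec_\RR(\ZZ/2\ZZ)$ would also close the gap, but then it is a required step, not an independent check.

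A minor point: $\FPdim(R(\1))=\FPdim(FR(\1))=4$, so $\overline{R(\1)}$ is $2\cdot\1\oplus 2e$ in (V) and (VI), not $\1\oplus e$. What you actually use, that the non-unit constituents are bosons, is still correct.

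\textbf{Case (VI).} Your idea of exhibiting a quaternionic fermion is the paper's idea, but your proposed carrier cannot work. Galois-nontrivial objects are unbraidable: naturality of a half-braiding against $\lambda\in\End(\1)\cong\CC$ forces $\id_X\otimes\lambda=\lambda\otimes\id_X$. So no object of $\Z(\C)$ lies over $g\oplus g^3$, and the forgetful functor lands in $\C_0=\langle\1,a\rangle$.

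Since $\Hom(F(Z),\1)\cong\Hom(Z,R(\1))$ and $R(\1)\cong 2\cdot\1\oplus Q$, the fermion must lie over $2\cdot a$. That is where the paper finds it. The quaternionic half-braiding on $2\cdot\1$, where crossing $g$ acts by a real $90^\circ$ rotation $J$, is transported to $2\cdot a$. Its self-braiding is governed by the value on $a=g^2$, namely $J^2=-1$.

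Your fallback is a legitimate alternative, but it is only asserted. To complete it:
\begin{enumerate}[(i)]
\item A $\C$-module endofunctor of $\Vec_\CC$ must commute with the action of $\Omega\C=\CC$, so it is $\CC$-linear.
\item With $g$ acting by conjugation, the dual category is therefore modules over the crossed product $\CC\rtimes\ZZ/4\ZZ\cong M_2(\RR)\oplus\HH$.
\item So the dual has $\Omega=\RR$, a real unit and one quaternionic simple. By Proposition~\ref{prop:List of all C with dim(ZC)=4} it is $\Q_+$ or $\Q_-$.
\item $\Q_+$ is excluded because $\Z(\Q_+)\simeq\DS_\RR$ is not a real form of $\TC$.
\end{enumerate}
Written out this way, the Morita argument replaces the explicit half-braidings. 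It also handles (V) analogously, where the crossed product is $M_2(\RR)^{2}$. As the proposal stands, however, neither of your routes for (VI) is complete.
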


\begin{proof}
    Lemma \ref{lem:Centers of CG cats are real forms of C_0} tells us that centers of (V)-(VII) will be real forms of $\TC$, and that the center of (VIII) must be $\DS_\RR$, by Proposition \ref{prop:DS has a unique real form}.

    Using proposition \ref{prop:Grothendieck ring isomorphism}, the complex Galois Grothendieck ring of (V) is easily seen to be $M_2(\RR)\oplus M_2(\RR)$.
    Since $FR(\1)\cong 4\cdot\1$, it follows that the unit object in (V) admits two nonisomorphic half-braidings.
    These half-braidings correspond to characters for $\ZZ/2\ZZ$, determined by how the object braids across the Galois trivial simple object $a$.
    The object $a$ can be equipped with such characters as well, and this establishes the existence of four distinct real simples, so the center must be $\TC_\RR$.

    Let us write $a$ for the generator of $\ZZ/4\ZZ$ in case (VI).
    The Grothendieck ring contains a central involution $a^2$, so the projections $p=\tfrac12(1+a^2)$ and $q=\tfrac12(1-a^2)$ decompose the algebra into two blocks.
    On the $p$ block, $(pa)^2=p$ and $pa$ conjugates scalars, so this block is isomorphic to $M_2(\RR)$.
    On the $q$ block, $(qa)^2=-q$ and $qa$ conjugates scalars, so this block is isomorphic to $\HH$.
    Since $FR(\1)\cong 4\cdot\1$, we have found real and quaternionic half-braidings that live over $\1$.
    The quaternionic half-braiding can be realized as a $90^\circ$ rotation matrix acting on $2\cdot\1$ whenever the object braids over $a$.
    These half-braidings can be ported to live over $a^2$ as well, and the quaternionic half-braiding on $a^2$ is easily seen to be fermionic, from the description above.

    Finally in case (VII), we have already seen in Example \ref{eg:Conjugating Ising} that $\End(R(\1))\cong M_2(\RR)\oplus\CC$.
    By Proposition \ref{prop:Classification of real forms of TC and 3F}, the presence of a complex object implies that the center must be $\TC_{\Tri}$.
\end{proof}

\begin{theorem}\label{thm:All the centers of dim=4}
    Suppose $\C$ is a fusion category over $\RR$, with $\Omega\Z\C:=\End(\1_{\Z(\C)})\cong\RR$ and $\FPdim(\Z(\C))=4$.
    Then the pair $(C,\Z(\C))$ corresponds to exactly one of the rows in the table below.
    \begin{equation}\label{eqn:Table of centers}
        \begin{array}{c|c}
            \C & \Z(\C) \\\hline\\[-8pt]
            \Vec_\RR(\ZZ/2\ZZ) & \TC_\RR\\[2pt]
            \Vec_\RR^\omega(\ZZ/2\ZZ) & \DS_\RR\\[2pt]
            \C_\HH(1,\chi,-1/2) & \TC_\HH^f\\[2pt]
            \C_\HH(1,\chi,+1/2) & \DS_\RR\\[2pt]
            \Vec_\CC\big((\ZZ/2\ZZ)^2_{\text{Gal}}\big) & \TC_\RR\\[2pt]
            \Vec_\CC\big((\ZZ/4\ZZ)_{\text{Gal}}\big) & \TC_\HH^f\\[2pt]
            \C_{\overline{\CC}}(\ZZ/2\ZZ,\chi) & \TC_{\Tri}\\[2pt]
            \Vec_\CC^\omega\big((\ZZ/2\ZZ)^2_{\text{Gal}}\big) & \DS_\RR\\
        \end{array}
    \end{equation}
\end{theorem}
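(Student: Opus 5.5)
The theorem assembles results already proved, so the plan is to combine Proposition \ref{prop:List of all C with dim(ZC)=4} with Lemmas \ref{lem:The centers of (I)-(IV)} and \ref{lem:Centers of (V)-(VIII)}. First, given $\C$ over $\RR$ with $\Omega\Z\C\cong\RR$ and $\FPdim(\Z(\C))=4$, Proposition \ref{prop:List of all C with dim(ZC)=4} places $\C$ in the list (I)--(VIII). These are exactly the eight left-hand entries of Table (\ref{eqn:Table of centers}), after the reordering (I), (III), (II), (IV), (V), (VI), (VII), (VIII).

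Second, the right-hand column is read off from the two lemmas. Lemma \ref{lem:The centers of (I)-(IV)} gives $\TC_\RR$ for (I), $\TC_\HH^f$ for (II), and $\DS_\RR$ for (III) and (IV). Lemma \ref{lem:Centers of (V)-(VIII)} gives $\TC_\RR$, $\TC_\HH^f$, $\TC_{\Tri}$ and $\DS_\RR$ for (V)--(VIII). So every admissible $\C$ has its center recorded correctly in some row.

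Third, ``exactly one row'' requires that the eight left-hand categories be pairwise inequivalent as fusion categories. Then an admissible $\C$ matches a single row, and the right-hand entries need not be distinct. I would separate them by invariants.
\begin{itemize}
\item The unit endomorphism algebra $\Omega\C$ is $\RR$ for (I)--(IV) and $\CC$ for (V)--(VIII).
\item Among (I)--(IV), the nontrivial simple is real for (I) and (III) and quaternionic for (II) and (IV). Within each pair the associators lie in distinct classes, and the complexifications differ: $\overline{\Q_-}\simeq\sVec_\CC$ as a fusion category is $\Vec_\CC(\ZZ/2\ZZ)$, while $\overline{\Q_+}\simeq\Vec_\CC^\omega(\ZZ/2\ZZ)$ by Example \ref{eg:Q_+ is a monoidal real form of Sem}. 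Similarly, $\overline{\Vec^\omega_\RR(\ZZ/2\ZZ)}$ is twisted and $\overline{\Vec_\RR(\ZZ/2\ZZ)}$ is not.
\item Among (V)--(VIII), the fusion rules separate them: (V) has group $(\ZZ/2\ZZ)^2$, (VI) has $\ZZ/4\ZZ$, and (VII) is non-pointed. For (VIII), its Galois trivial part $\C_0$ is $\Vec^\omega_\CC(\ZZ/2\ZZ)$, whereas for (V) it is $\Vec_\CC(\ZZ/2\ZZ)$.
\end{itemize}
These invariants are preserved by equivalence, so the rows are distinct.

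The only delicate point is this distinctness check, in particular separating (I) from (III) and (II) from (IV). Centers cannot do this for (III) and (IV), since both have center $\DS_\RR$. So I would argue through the complexified associator classes, using extension of scalars to pass to $\CC$, where the $H^3$ classification applies.
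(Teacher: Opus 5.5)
Your proposal is correct and follows the paper's proof, which is exactly the combination of Proposition \ref{prop:List of all C with dim(ZC)=4} with Lemmas \ref{lem:The centers of (I)-(IV)} and \ref{lem:Centers of (V)-(VIII)}. The paper leaves the pairwise inequivalence of the eight left-hand categories implicit, so your added distinctness check is a reasonable supplement, and it can be shortened. Equivalent categories have equivalent centers, so the computed centers, together with $\Omega\C$ and whether the nontrivial simple is real or quaternionic, already separate all eight rows, and the complexified associator classes are not needed.
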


\begin{proof}
    Combine Proposition \ref{prop:List of all C with dim(ZC)=4} with Lemmas \ref{lem:The centers of (I)-(IV)} and \ref{lem:Centers of (V)-(VIII)}.
\end{proof}

The category $\TC_\HH^b$ doesn't appear in Table (\ref{eqn:Table of centers}), but its complexification is $\TC$, which is a center.

\begin{corollary}\label{cor:Bosonic TC_H is the true class}
    The category $\TC_\HH^b$\footnote{not $\TC_{\Tri}$ as previously claimed in \cite[Appendix A]{MR4934638}} is not a Drinfeld center, and thus represents the unique nontrivial class in
    \[\ker\big(\Witt(\Vec_\RR)\to\Witt(\Vec_\CC)\big)\cong\ZZ/2\ZZ\,.\]
\end{corollary}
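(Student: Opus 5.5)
First I show $\TC_\HH^b$ is not a Drinfeld center. Suppose $\TC_\HH^b\simeq\Z(\C)$ for some real fusion category $\C$. Then $\Omega\Z\C=\End(\1_{\TC_\HH^b})\cong\RR$, since the unit of $\TC_\HH^b$ is real. Also $\FPdim(\Z(\C))=4$. So Theorem \ref{thm:All the centers of dim=4} applies, and $\Z(\C)$ must be one of $\TC_\RR$, $\DS_\RR$, $\TC_\HH^f$ or $\TC_{\Tri}$.

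None of these is braided equivalent to $\TC_\HH^b$, because braided equivalences preserve the types of simple objects and their exchange statistics:
\begin{itemize}
\item $\TC_\RR$ has only real simples.
\item $\DS_\RR$ has a complex simple.
\item $\TC_{\Tri}$ has a complex simple.
\item $\TC_\HH^f$ has a quaternionic fermion, whereas $\TC_\HH^b$ has both quaternionic simples bosonic.
\end{itemize}
Proposition \ref{prop:Classification of real forms of TC and 3F} already distinguishes these forms by exactly these invariants. This contradiction proves the first claim.

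Next I identify the kernel $\ker(\Witt(\Vec_\RR)\to\Witt(\Vec_\CC))$. A class $[\B]$ lies in it exactly when $\overline\B$ is Witt trivial over $\CC$, i.e.\ when $\overline\B\simeq\Z(\D)$ for some complex $\D$. The class $[\TC_\HH^b]$ lies in it, because $\overline{\TC_\HH^b}\simeq\TC=\Z(\Vec_\CC(\ZZ/2\ZZ))$. It is nonzero in $\Witt(\Vec_\RR)$: a Witt-trivial nondegenerate category is a center, and the first part rules this out. For this I may need the standard fact that Witt triviality equals being a Drinfeld center, as in \cite{MR3039775}, transported to $\RR$.

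It has order $2$. Its inverse is the reverse $(\TC_\HH^b)^{rev}$, and reversing the braiding on $\TC_\HH^b$ gives back the same category, since all the braiding coefficients are $\pm1$.

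The main obstacle is the statement that the kernel is exactly $\ZZ/2\ZZ$, with $[\TC_\HH^b]$ as its only nontrivial element. I would argue via Galois descent. Take a real $\B$ with $\overline\B\simeq\Z(\D)$. Then $\B\boxtimes\Z(\D')^{rev}$, for a suitable real form, reduces to real forms of a center. Using the Lagrangian algebra in $\overline\B$, which is Galois-stable after a choice, I would reduce $\B$ up to Witt equivalence to a real form of $\Z(\D)$ that has a Galois-stable Lagrangian. The obstruction to descending the Lagrangian algebra lives in $H^2(\ZZ/2\ZZ;\Omega)$, i.e.\ in the Brauer group $\mathrm{Br}(\RR)=\ZZ/2\ZZ$. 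I expect this map to the Brauer group to be injective on the kernel, and the twisted form $\TC_\HH^b$ realizes the nontrivial Brauer class, as in \cite{sanford2026puttingbrauerbrauerpicard}. If the kernel computation $\cong\ZZ/2\ZZ$ is already available from \cite{MR4934638}, I would instead cite it and only identify the generator. The generator is then the unique nontrivial class, by the first two steps.
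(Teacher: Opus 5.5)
Your proposal is correct and follows the paper's argument. Non-centrality comes from $\TC_\HH^b$ being absent from the table of Theorem \ref{thm:All the centers of dim=4}, membership in the kernel from $\overline{\TC_\HH^b}\simeq\TC$, and uniqueness of the class from the computation in \cite{MR4934638} that this kernel is $H^4(\RR;\mathbb G_m)\cong\ZZ/2\ZZ$. Citing that result, as your fallback proposes, is what the paper does, so you can drop the separate order-two check and the Galois-descent sketch; the sketch only expects, and does not prove, the injectivity it needs.
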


\begin{proof}
    Being nonzero in this kernel is precisely the statement that $\TC_\HH^b$ is not a center, but becomes one upon complexification.
    It was already proven in \cite{MR4934638} that the kernel of the complexification functor is $H^4(\RR;\mathbb G_m)\cong\ZZ/2\ZZ$, so this class must be unique.
\end{proof}

\begin{corollary}\label{cor:No kernel for Mext(sVec_R)}
    The map $U_\RR:\C\mapsto[\C]$ from $\Mext(\sVec_\RR)$ into $\Witt(\Vec_\RR)$ yields an identification
    \[\Mext(\sVec_\RR)\cong\ker\big(\Witt(\Vec_\RR)\to\Witt(\sVec_\RR)\big)\,.\]
\end{corollary}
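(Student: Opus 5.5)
The plan is to show that $U_\RR$ is a group homomorphism, that it is injective, and that it surjects onto the stated kernel. For this, the map $U_\RR$ is taken as given in the form described at the start of Section \ref{sec:Classification of centers}. Any MNE $\sVec_\RR\hookrightarrow\B$ lands in the kernel of $(-)\boxtimes\sVec_\RR$. Indeed, $\B\boxtimes\sVec_\RR$ contains the Lagrangian-type étale algebra $I(\1)$ coming from the diagonal copy of $\sVec_\RR$. Its local modules are $\Z(\sVec_\RR)$-trivial over $\sVec_\RR$, exactly as in the complex argument of \cite[Proposition 5.14]{MR3022755}. Compatibility of $U_\RR$ with $\boxdot$ follows because $\C\boxdot\D=(\C\boxtimes\D)^0_A$ is Witt equivalent to $\C\boxtimes\D$. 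The witness is the Witt equivalence $\C\boxtimes(\C_A^0)^{rev}\simeq\Z(\C_A)$ recalled in Section \ref{sec:Classification of Mext groups}.

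\textbf{Injectivity.} An element of $\ker U_\RR$ is an MNE $\B$ with $[\B]=0$ in $\Witt(\Vec_\RR)$. Since $\FPdim(\B)=4$ and $\B$ is nondegenerate with real unit, a Witt-trivial $\B$ would be a Drinfeld center $\Z(\C)$ of a category with $\Omega\Z\C=\RR$ and $\FPdim(\Z(\C))=4$. This rests on the standard fact that Witt-trivial nondegenerate categories are centers, whose proof via Lagrangian algebras should go through over $\RR$ verbatim. Theorem \ref{thm:Mext(sVec_R) is Klein-four} lists the four candidates $\TC_\RR$, $\TC_\HH^b$, $\TF_\RR$, $\TF_\HH$. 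Table (\ref{eqn:Table of centers}) in Theorem \ref{thm:All the centers of dim=4} shows that, among these, only $\TC_\RR$ occurs as a center. Hence $\ker U_\RR$ consists only of the identity.

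\textbf{Surjectivity.} This is the main obstacle, since the real version of \cite[Proposition 5.15]{MR3613518} has not been established in the text. I would try to adapt the Lan--Kong--Wen argument directly. Suppose $[\B]\boxtimes\sVec_\RR$ is trivial in $\Witt(\sVec_\RR)$. Then $\B\boxtimes\sVec_\RR\simeq\Z_{\sVec_\RR}(\mathcal M)$ for some $\sVec_\RR$-fusion category $\mathcal M$, Witt-equivalently. Following Davydov--Nikshych--Ostrik, I would take the centralizer of $\sVec_\RR$ inside a suitable nondegenerate category built from $\B$ and $\mathcal M$. The point is that this construction uses only étale algebras, local modules, and centralizers, all of which behave identically over $\RR$ once $\Omega$ of every category involved is $\RR$. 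This produces an MNE of $\sVec_\RR$ in the Witt class of $\B$.

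As a fallback, one can avoid the general argument and bound the target's size instead. Since $U_\RR$ is injective with image of order $4$, it suffices to show that the kernel has order at most $4$. For this I would use the complexification map $\ker(\Witt(\Vec_\RR)\to\Witt(\sVec_\RR))\to\ker(\Witt(\Vec_\CC)\to\Witt(\sVec_\CC))\cong\ZZ/16\ZZ$. The image must consist of classes with real central charge, which in $\ZZ/16\ZZ$ gives only $\{0,8\}$. The kernel of this complexification map lies inside $\ker(\Witt(\Vec_\RR)\to\Witt(\Vec_\CC))\cong\ZZ/2\ZZ$ by Corollary \ref{cor:Bosonic TC_H is the true class}. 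Together these give the bound $|\ker|\le 4$, and the identification follows.
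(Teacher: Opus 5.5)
Your main line is the paper's proof. Injectivity holds because, of $\TC_\RR$, $\TC_\HH^b$, $\TF_\RR$, $\TF_\HH$, only the unit $\TC_\RR$ appears in Table~(\ref{eqn:Table of centers}). Surjectivity comes from carrying the argument of \cite[Proposition 5.15]{MR3613518} over to $\RR$, which the paper also simply asserts.

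You additionally check that $U_\RR$ is a homomorphism into the kernel, which the paper takes for granted. One correction there: $I(\1)$ is not Lagrangian in $\B\boxtimes\sVec_\RR$. The relevant point is that its local modules form the centralizer of $\sVec_\RR$ in $\B$, which is $\sVec_\RR$ by minimality.

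Your fallback is a genuinely different and more self-contained route to surjectivity, and it works. Set $K=\ker(\Witt(\Vec_\RR)\to\Witt(\sVec_\RR))$. Complexification maps $K$ into $\ker(\Witt(\Vec_\CC)\to\Witt(\sVec_\CC))\cong\ZZ/16\ZZ$. Its kernel on $K$ lies inside the $\ZZ/2\ZZ$ of Corollary~\ref{cor:Bosonic TC_H is the true class}, which is proved before this statement without using it, so there is no circularity. Hence $|K|\le 4$, and an injection from a group of order $4$ is forced to be onto.

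Compared with the paper, this trades re-deriving Lan--Kong--Wen over $\RR$ for importing the complex $\ZZ/16\ZZ$ computation of \cite{MR3022755} and the $\ZZ/2\ZZ$ kernel of complexification from \cite{MR4934638}.

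The one step to phrase more carefully is ``real central charge''. Since $\overline\B$ need not be pseudounitary, a bare central charge is not obviously a Witt invariant. Argue instead as follows:
\begin{itemize}
\item the real structure gives a braided equivalence between $\overline\B$ and its complex conjugate, so $[\overline\B]$ is fixed by the complex-conjugation action on $\Witt(\Vec_\CC)$;
\item this action sends the Ising generator to its reverse, so it acts as $\nu\mapsto-\nu$ on $\ZZ/16\ZZ$;
\item the fixed points of $\nu\mapsto-\nu$ are exactly $\{0,8\}$.
\end{itemize}
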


\begin{proof}
    The only category from Theorem \ref{thm:Mext(sVec_R) is Klein-four} that appears in Table (\ref{eqn:Table of centers}) is $\TC_\RR$, which is the unit.
    Thus $\ker(U_\RR)=1$, so $U_\RR$ in injective.
    The fact that $U_\RR$ surjects onto the kernel follows from the same argument given in \cite[Proposition 5.15]{MR3613518}.
\end{proof}

A similar comparison of Table (\ref{eqn:Table of centers}) with Theorem \ref{thm:Mext(sVec_H) is Klein-four} shows that some of these MNEs \emph{are} Witt trivial over $\RR$.

\begin{corollary}\label{cor:Some kernel for Mext(sVec_H)}
    The map $U_\HH:\C\mapsto[\C]$ from $\Mext(\sVec_\HH)$ to $\Witt(\Vec_\RR)$ has kernel $\ZZ/2\ZZ$; generated by $\TC_{\Tri}$.
    This implies that $\TF_\HH$ and $\TF_{\Tri}$ are Witt equivalent.
\end{corollary}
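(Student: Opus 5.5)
The plan is to compare Theorem \ref{thm:Mext(sVec_H) is Klein-four} with Table (\ref{eqn:Table of centers}). The kernel of $U_\HH$ consists of those extensions $\sVec_\HH\hookrightarrow\B$ for which $[\B]=0$ in $\Witt(\Vec_\RR)$. For a nondegenerate category of $\FPdim=4$ with real unit, Witt triviality should mean that $\B$ is a Drinfeld center. The justification is the standard one: if $[\B]=0$, then $\B\simeq\Z(\C)$ for some fusion category $\C$, and the argument of \cite{MR3039775} goes through over $\RR$ once $\Omega\Z\C=\RR$.

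\textbf{Identifying the kernel.} Run through the four representatives $\TC_\HH^f$, $\TC_{\Tri}$, $\TF_\HH$ and $\TF_{\Tri}$, and check which appear in the right-hand column of Table (\ref{eqn:Table of centers}).
\begin{itemize}
\item $\TC_\HH^f$ appears: it is the center of $\C_\HH(1,\chi,-1/2)$, and it is the unit of $\Mext(\sVec_\HH)$.
\item $\TC_{\Tri}$ appears: it is the center of $\C_{\overline{\CC}}(\ZZ/2\ZZ,\chi)$, by Lemma \ref{lem:Centers of (V)-(VIII)}.
\item The two $\TF$ categories cannot be centers. By Lemma \ref{lem:Central charge for MNEs} their complexifications have central charge $-1$, while the complexification of a center is a center and so has $\tau>0$ by Proposition \ref{prop:The only possible complexifications}.
\end{itemize}
So $\ker(U_\HH)=\{\TC_\HH^f,\TC_{\Tri}\}$, which is $\ZZ/2\ZZ$ generated by $\TC_{\Tri}$. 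Well-definedness of $U_\HH$ on equivalence classes, and the fact that it is a homomorphism, I take from the analogue of \cite[Proposition 5.15]{MR3613518}, exactly as in Corollary \ref{cor:No kernel for Mext(sVec_R)}.

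\textbf{The Witt equivalence.} From the group law of Theorem \ref{thm:Mext(sVec_H) is Klein-four}, $\TF_\HH\boxdot\TC_{\Tri}=\TF_{\Tri}$. This holds because the product of two $\Tri$-type categories lands in the $\HH$ type, and the product with a $\TF$ changes the central charge, which pins down the result. Applying the homomorphism $U_\HH$ then gives
\[
[\TF_{\Tri}]=[\TF_\HH]+[\TC_{\Tri}]=[\TF_\HH]
\]
in $\Witt(\Vec_\RR)$, since $[\TC_{\Tri}]=0$.

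\textbf{Main obstacle.} The delicate step is justifying that $U_\HH$ is a group homomorphism into $\Witt(\Vec_\RR)$, i.e.\ that
\[
[(\C\boxtimes\D)^0_A]=[\C]+[\D]
\]
over $\RR$. This follows from the general Witt equivalence $\B\boxtimes((\B)^0_A)^{rev}\simeq\Z(\B_A)$ recalled in Section \ref{sec:Classification of Mext groups}. One has to check that $\B_A$ is honestly fusion over $\RR$ with the correct unit endomorphisms. That holds because $A$ is connected étale, but I would verify it explicitly for the canonical algebra of $\sVec_\HH\boxtimes\sVec_\HH$, whose unit endomorphism algebra is $\RR$.
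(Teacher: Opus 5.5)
Your proposal is correct and is essentially the paper's argument: you read off the kernel by checking which of the four extensions in Theorem \ref{thm:Mext(sVec_H) is Klein-four} appear in Table (\ref{eqn:Table of centers}), and you get the Witt equivalence from $\TF_\HH\simeq\TF_{\Tri}\boxdot\TC_{\Tri}$ together with $[\TC_{\Tri}]=0$. Your central-charge exclusion of the $\TF$ categories is a harmless extra, since they are already absent from the table; the identity $\TF_\HH\boxdot\TC_{\Tri}=\TF_{\Tri}$ is simply the Klein-four law (the product of two distinct non-identity elements is the third), and the Tri-times-Tri rule you cite is not the relevant one for that product.
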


\begin{proof}
    The argument is similar to Corollary \ref{cor:No kernel for Mext(sVec_R)}.
    Witt triviality of $\TC_{\Tri}$ implies that $\TF_\HH\simeq\TF_{\Tri}\boxdot\TC_{\Tri}$ is Witt equivalent to $\TF_{\Tri}$.
\end{proof}

The final interesting thing about this table is that $\TC_{\Tri}=\Z(\C_{\overline\CC}(\ZZ/2\ZZ,\chi))$.
For some context, the $\E$ center of a fusion category $\C$ over $\E$ is the centralizer of $\E$ inside of $\Z(\C)$, and is denoted $\Z(\C,\E)$.
An $\E$-Witt equivalence, from $\mathcal A$ to $\B$ is a fusion category $\C$ over $\E$, equipped with a braided equivalence $\mathcal A\boxtimes_\E\B^{rev}\to\Z(\C,\E)$ as braided categories over $\E$.

\begin{proposition}\label{prop:Nontrivial auto-Witt equivalence for sVec_H}
    The fusion category $\C_{\overline{\CC}}(\ZZ/2\ZZ,\chi)$ is a non-Morita-trivial $\sVec_\HH$-Witt equivalence from $\sVec_\HH$ to itself.
\end{proposition}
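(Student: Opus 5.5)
We need to show three things about $\C:=\C_{\overline{\CC}}(\ZZ/2\ZZ,\chi)$: that $\C$ is a fusion category over $\sVec_\HH$; that its $\sVec_\HH$-center $\Z(\C,\sVec_\HH)$ is braided equivalent over $\sVec_\HH$ to $\sVec_\HH\boxtimes_{\sVec_\HH}\sVec_\HH^{rev}\simeq\sVec_\HH$; and that this Witt equivalence is not Morita trivial. Everything should follow from the identification $\Z(\C)\simeq\TC_{\Tri}$ in Lemma \ref{lem:Centers of (V)-(VIII)}.

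\textbf{Step 1: $\C$ lies over $\sVec_\HH$.} By Proposition \ref{prop:Classification of real forms of TC and 3F}, $\TC_{\Tri}$ has exactly one real simple $\1$, one complex simple (the swapped bosons), and one quaternionic simple (the fermion). The unit and the quaternionic fermion span a braided fusion subcategory. Its complexification is the symmetric subcategory $\langle\1,ab\rangle\subset\TC$, which is $\sVec_\CC$, so this subcategory is $\sVec_\HH$. This gives a braided embedding $\sVec_\HH\hookrightarrow\Z(\C)$. We also need the composite with the forgetful functor $\Z(\C)\to\C$ to be fully faithful, which is the usual requirement for a fusion category over $\E$. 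The quaternionic fermion must map to a simple of $\C$ whose endomorphisms contain $\HH$. Since $\Omega\C=\CC$ and $\C$ is small, I would check instead that $F$ is fully faithful on $\langle\1,Y\rangle$. If that fails, it suffices that the composite $\sVec_\HH\to\Z(\C)\to\C$ is a central functor. I expect this to be the subtle point, and I would verify it via $FR(\1)$ and the class-function computation of Example \ref{eg:Conjugating Ising}.

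\textbf{Step 2: compute the relative center.} $\Z(\C,\sVec_\HH)$ is the Müger centralizer of $\sVec_\HH$ in $\TC_{\Tri}$. Nondegeneracy of $\TC_{\Tri}$ gives $\FPdim(\Z(\C,\sVec_\HH))=4/2=2$. To identify it, complexify: the centralizer of $\langle\1,ab\rangle$ in $\TC$ is $\langle\1,ab\rangle$ itself, since both bosons braid by $-1$ with the fermion. Hence the centralizer is $\sVec_\HH$ itself, with its canonical braiding. Because $\sVec_\HH$ is symmetric, $\sVec_\HH\boxtimes_{\sVec_\HH}\sVec_\HH^{rev}\simeq\sVec_\HH$, and we obtain the required braided equivalence over $\sVec_\HH$. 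Lemma \ref{lem:Don't worry about the embeddings} lets us ignore the choice of embedding data.

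\textbf{Step 3: non-Morita-triviality.} Suppose this Witt equivalence were Morita trivial, i.e.\ $\C$ were Morita equivalent over $\sVec_\HH$ to $\sVec_\HH$. Morita equivalence preserves the Drinfeld center together with its $\E$-structure, so we would have $\Z(\C)\simeq\Z(\sVec_\HH)=\TC_\HH^f$, the latter by Lemma \ref{lem:The centers of (I)-(IV)}. But $\Z(\C)\simeq\TC_{\Tri}$ contains a complex simple, while $\TC_\HH^f$ does not, so the two are not even equivalent as braided categories. This is a contradiction.

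The only real obstacle is the well-definedness in Step 1, namely making the structure of $\C$ as a category over $\sVec_\HH$ precise. Steps 2 and 3 are formal once Step 1 is in place.
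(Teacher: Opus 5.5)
Your Steps 2 and 3 follow the paper's argument. The paper reads off $\Z(\C,\sVec_\HH)\simeq\sVec_\HH$ from the fact that $\Z(\C)\simeq\TC_{\Tri}$ is a minimal nondegenerate extension of $\sVec_\HH$ (Theorem \ref{thm:Mext(sVec_H) is Klein-four}); your complexified centralizer computation just unpacks this. The paper then gets non-Morita-triviality exactly as you do, from $\TC_{\Tri}\not\simeq\TC_\HH^f\simeq\Z(\sVec_\HH)$.

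What needs correcting is your plan for Step 1, which you single out as the only real obstacle: the verification you propose cannot succeed.
\begin{itemize}
\item The composite $\sVec_\HH\to\Z(\C)\to\C$ is not full on the unit. Since $\Omega\C\cong\CC$ while $\Omega\Z(\C)=\RR$, on the unit it is $\RR\hookrightarrow\CC$.
\item It is not full on the quaternionic fermion $Y$ either. $F(Y)$ carries a half-braiding, so it is Galois trivial and lies in $\langle\1,a\rangle$. Since $\FPdim(F(Y))=2$, it is a sum of two invertible objects. It is therefore not simple, and its endomorphism algebra is not $\HH$. In fact $F(Y)\cong a\oplus a$: by Example \ref{eg:Conjugating Ising}, $R(\1)$ has no quaternionic summand, so $\Hom_\C(F(Y),\1)\cong\Hom_{\Z(\C)}(Y,R(\1))=0$.
\end{itemize}

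Consequently, under the Davydov--Nikshych--Ostrik convention, which requires this composite to be fully faithful, no fusion category with $\Omega\C\cong\CC$ is over $\sVec_\HH$ at all. The proposition only makes sense with a weaker notion.

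The paper never checks this condition. It tacitly takes the braided embedding $\sVec_\HH\hookrightarrow\Z(\C)$ coming from the minimal nondegenerate extension as the structure of being over $\sVec_\HH$, with $\Z(\C,\sVec_\HH)$ the centralizer of its image. The first half of your Step 1 already produces that embedding. So you should:
\begin{itemize}
\item drop the full-faithfulness requirement;
\item state this weaker convention explicitly;
\item note that your fallback (centrality of the composite) then holds automatically, because the composite factors through $\Z(\C)$, so no $FR(\1)$ computation is needed.
\end{itemize}
With that change your proof coincides with the paper's.
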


\begin{proof}

    Let $\E=\sVec_\HH$, and $\C=\C_{\overline{\CC}}(\ZZ/2\ZZ,\chi)$.
    The fact that $\Z(\C)$ is a minimal nondegenerate extension of $\sVec_\HH$ is equivalent to the statement that there is a braided equivalence $\sVec_\HH\simeq\Z(\C,\sVec_\HH)$, over $\sVec_\HH$.
    Since $\sVec_\HH$ is symmetric, we can write this as
    \[\sVec_\HH\boxtimes_{\sVec_\HH}\sVec_\HH^{rev}\simeq\Z(\C,\sVec_\HH),\]
    which is just the statement that $\C$ (together with the braided equivalence) is an $\sVec_\HH$-Witt equivalence from $\sVec_\HH$ to itself.

    Nontriviality means that $\C$ is not Morita equivalent to $\sVec_\HH$.
    This is immediate, because they have inequivalent Drinfeld centers.
\end{proof}


\section{Interpretation of \texorpdfstring{$\Mext$}{Mext} groups as homotopy fibers}\label{sec:Mext groups as homotopy fibers}

This section is a speculative outlook that should help organize all the ideas proved earlier in the paper.
I make no claims of originality or rigor in this section; my aim here is only to explain what is expected to be true.

The Witt groups of $\E$ can be interpreted as $\pi_0$ of a higher groupoid $\underline{\Witt}(\E)$.
The objects of $\underline{\Witt}(\E)$ are braided categories over $\E$.
The 1-morphisms are the $\E$-Witt equivalences.
The 2-morphisms $\C\to\D$ are central $\C$-$\D$ bimodules.
The 3-morphisms are functors of such bimodules, and the 4-morphisms are natural transformations of such functors.

The homotopy groups of $\underline{\Witt}(\Vec_\CC)$ are
\begin{itemize}[~]
    \item $\pi_4\underline{\Witt}(\Vec_\CC)\cong\CC^\times$,
    \item $\pi_3\underline{\Witt}(\Vec_\CC)=0$,
    \item $\pi_2\underline{\Witt}(\Vec_\CC)=0$,
    \item $\pi_1\underline{\Witt}(\Vec_\CC)=0$,
    \item $\pi_0\underline{\Witt}(\Vec_\CC)\cong\Witt(\Vec_\CC)$,
\end{itemize}
and the homotopy groups of $\underline{\Witt}(\sVec_\CC)$ are
\begin{itemize}[~]
    \item $\pi_4\underline{\Witt}(s\Vec_\CC)\cong\CC^\times$,
    \item $\pi_3\underline{\Witt}(\sVec_\CC)=\ZZ/2\ZZ$,
    \item $\pi_2\underline{\Witt}(\sVec_\CC)=\ZZ/2\ZZ$,
    \item $\pi_1\underline{\Witt}(\sVec_\CC)=0$,
    \item $\pi_0\underline{\Witt}(\Vec_\CC)\cong\Witt(\sVec_\CC)$.
\end{itemize}
The the nontrivial element in $\pi_3\underline{\Witt}(\sVec_\CC)$ corresponds to the fermion in $\sVec_\CC$, and the copy of $\ZZ/2\ZZ$ in $\pi_2\underline{\Witt}(\sVec_\CC)$ corresponds the Brauer-Wall group, with the nontrivial element corresponding to the Clifford algebra $Cl(1)$.

For a general $\E$ over a field $\KK$, we expect to find
\begin{itemize}[~]
    \item $\pi_4\underline{\Witt}(\E)\cong\KK^\times$,
    \item $\pi_3\underline{\Witt}(\E)=\text{invertible objects in }\E$,
    \item $\pi_2\underline{\Witt}(\E)=\text{Morita-invertible algebras in }\E$,
    \item $\pi_1\underline{\Witt}(\E)=\E\text{-Witt equivalences}$,
    \item $\pi_0\underline{\Witt}(\E)\cong\Witt(\E)$.
\end{itemize}

The exact construction of these higher groupoids would require a good definition of a symmetric monoidal 4-category, but we can think of them as topological spaces through a classifying space construction, similar to those described in \cite[Section 7.1]{MR2677836}, and analogous to classifying spaces of groups.
Authors more inclined to $\infty$-category theory might instantiate them as $(\infty,0)$-categories.
For example, several constructions of the $E_2$-Morita 4-category of braided fusion categories have been described in the literature \cite{MR4228258,MR4302495,MR3590516,MR5047948,décoppet2024classificationfusion2categories,MR3650080,Solr-DE-604.BV050626861}, and $\Witt(\E)$ corresponds to the subgroupoid consisting of invertible morphisms at all levels.

However these objects are to be modeled, the $\Mext$ groups are defined in such a way that they should also compile into a higher groupoid $\underline{\Mext}(\E)$ and we expect this groupoid to have the following property:

\begin{conjecture}[folklore]\label{conj:The only conjecture in the paper}
    For a symmetric fusion category $\E$ over a field $\KK$, there is a homotopy fiber sequence
    \[\underline{\Mext}(\E)\hookrightarrow\underline{\Witt}(\Vec_\KK)\twoheadrightarrow\underline{\Witt}(\E)\,.\]
\end{conjecture}
I learned of this conjecture from Thibault Décoppet, and I expect that Lan, Kong, and Wen understood this when they wrote \cite{MR3613518}.
It was certainly known by Johnson-Freyd and Reutter, as hints of it can be found in \cite[Section 4]{MR4654609}.

By setting $\KK=\RR$ and $\E=\sVec_\RR$, and looking at the long exact sequence of homotopy groups associated to the fiber sequence of Conjecture \ref{conj:The only conjecture in the paper}, we see the following:
\[
    \begin{tikzpicture}[x=3cm,y=1.2cm]
        \node (U) at (0.045,4) {$\pi_4\underline{\Mext}(\sVec_\RR)$};
        \node (V) at (1,4) {$\RR^\times$};
        \node (W) at (2,4) {$\RR^\times$};
        \node (X) at (0.045,3) {$\pi_3\underline{\Mext}(\sVec_\RR)$};
        \node (Y) at (1,3) {$0$};
        \node (Z) at (2,3) {$\ZZ/2\ZZ$};
        \node (A) at (0.045,2) {$\pi_2\underline{\Mext}(\sVec_\RR)$};
        \node (B) at (1,2) {$\ZZ/2\ZZ$};
        \node (C) at (2,2) {$\ZZ/8\ZZ$};
        \node (D) at (0.045,1) {$\pi_1\underline{\Mext}(\sVec_\RR)$};
        \node (E) at (1,1) {$0$};
        \node (F) at (2,1) {$\pi_1\underline{\Witt}(\sVec_\RR)$};
        \node (G) at (0,0) {$\Mext(\sVec_\RR)$};
        \node (H) at (1,0) {$\Witt(\Vec_\RR)$};
        \node (I) at (2,0) {$\Witt(\sVec_\RR)$.};
        \draw[->, rounded corners]
        (U) edge (V)
        (V) edge (W)
        (W) -- ++(0.6,0) -- ++(0,-.5) -- ++(-3.1,0) -- ++(0,-.5) -- ++(.1,0) edge (X)
        (X) edge (Y)
        (Y) edge (Z)
        (Z) -- ++(0.6,0) -- ++(0,-.5) -- ++(-3.1,0) -- ++(0,-.5) -- ++(.1,0) edge (A)
        (A) edge (B)
        (B) edge (C)
        (C) -- ++(0.6,0) -- ++(0,-.5) -- ++(-3.1,0) -- ++(0,-.5) -- ++(.1,0) edge (D)
        (D) edge (E)
        (E) edge  (F)
        (F) -- ++(0.6,0) -- ++(0,-.4) -- ++(-3.1,0) -- ++(0,-.6) -- ++(.1,0) edge (G)
        (G) -- node[above]{$U_\RR$} (H)
        (H) edge (I);
    \end{tikzpicture}
\]
Here, we find $\ZZ/8\ZZ$ real Bott periodicity in the form of the Brauer-Wall group for $\RR$, which we can think of as invertible algebras in $\sVec_\RR$.
The map from the Brauer group $\ZZ/2\ZZ$ of $\RR$ is the inclusion of the quaternions, and the map from $\RR^\times\to\RR^\times$ is the identity.
The fact that $\pi_1\underline{\Witt}(\Vec_\RR)=0$ comes from the computation in \cite{sanford2024invertiblefusioncategories} that the group of Morita-invertible fusion categories over $\KK$ in are in bijection with $H^3(\KK;\mathbb G_m)$.

From this we expect the following homotopy groups
\begin{itemize}[~]
    \item $\pi_4\underline{\Mext}(s\Vec_\RR)=0$,
    \item $\pi_3\underline{\Mext}(\sVec_\RR)=0$,
    \item $\pi_2\underline{\Mext}(\sVec_\RR)=\ZZ/2\ZZ$,
    \item $\pi_1\underline{\Mext}(\sVec_\RR)=\ZZ/4\ZZ$,
    \item $\pi_0\underline{\Mext}(\Vec_\RR)\cong\Mext(\sVec_\RR)\mathop{\cong}\limits^{Thm \ref{thm:Mext(sVec_R) is Klein-four}}(\ZZ/2\ZZ)^2$.
\end{itemize}
If Conjecture \ref{conj:The only conjecture in the paper} is true, we see that our computation that $\ker(U_\RR)=0$ implies that $\pi_1\underline{\Witt}(\sVec_\RR)$ should be zero.

Unsurprisingly, given the fact that $\sVec_\HH$ remains relatively unexplored, we have a lot of unknowns in the version with $\E=\sVec_\HH$.
\[
    \begin{tikzpicture}[x=3cm,y=1.2cm]
        \node (U) at (0.046,4) {$\pi_4\underline{\Mext}(\sVec_\HH)$};
        \node (V) at (1,4) {$\RR^\times$};
        \node (W) at (2,4) {$\RR^\times$};
        \node (X) at (0.046,3) {$\pi_3\underline{\Mext}(\sVec_\HH)$};
        \node (Y) at (1,3) {$0$};
        \node (Z) at (2,3) {$\pi_3\underline{\Witt}(\sVec_\HH)$};
        \node (A) at (0.046,2) {$\pi_2\underline{\Mext}(\sVec_\HH)$};
        \node (B) at (1,2) {$\ZZ/2\ZZ$};
        \node (C) at (2,2) {$\pi_2\underline{\Witt}(\sVec_\HH)$};
        \node (D) at (0.046,1) {$\pi_1\underline{\Mext}(\sVec_\HH)$};
        \node (E) at (1,1) {$0$};
        \node (F) at (2,1) {$\pi_1\underline{\Witt}(\sVec_\HH)$};
        \node (G) at (0,0) {$\Mext(\sVec_\HH)$};
        \node (H) at (1,0) {$\Witt(\Vec_\RR)$};
        \node (I) at (2,0) {$\Witt(\sVec_\HH)$.};
        \draw[->, rounded corners]
        (U) edge (V)
        (V) edge (W)
        (W) -- ++(0.6,0) -- ++(0,-.5) -- ++(-3.1,0) -- ++(0,-.5) -- ++(.1,0) edge (X)
        (X) edge (Y)
        (Y) edge (Z)
        (Z) -- ++(0.6,0) -- ++(0,-.5) -- ++(-3.1,0) -- ++(0,-.5) -- ++(.1,0) edge (A)
        (A) edge (B)
        (B) edge (C)
        (C) -- ++(0.6,0) -- ++(0,-.5) -- ++(-3.1,0) -- ++(0,-.5) -- ++(.1,0) edge (D)
        (D) edge (E)
        (E) edge  (F)
        (F) -- ++(0.6,0) -- ++(0,-.4) -- ++(-3.1,0) -- ++(0,-.6) -- ++(.1,0) edge (G)
        (G) -- node[above]{$U_\HH$} (H)
        (H) edge (I);
    \end{tikzpicture}
\]
Even though $Y\in\sVec_\HH$ is quasi-invertible, $\pi_3\underline{\Witt}(\sVec_\HH)$ should be zero, because invertibility should be up to isomorphism and not Morita equivalence at this level.
The invertible algebra objects in $\sVec_\HH$ have yet to be determined, but this may be a tractable computation.
If Conjecture \ref{conj:The only conjecture in the paper} is true, then this would imply that $\ker(U_\HH)$ ($=\ZZ/2\ZZ$ by Corollary \ref{cor:Some kernel for Mext(sVec_H)}) is isomorphic to $\pi_1\underline{\Witt}(\sVec_\HH)$.
This is exactly the phenomena we observed in Proposition \ref{prop:Nontrivial auto-Witt equivalence for sVec_H}.

We conclude with some open questions about these structures.

\begin{question}
    Analogous to the Brauer and Brauer-Wall groups, what is the group of Morita invertible algebra objects in $\sVec_\HH$? (this should correspond to $\pi_2\underline{\Witt}(\sVec_\HH)$.)
\end{question}

Homotopy fibers in algebraic topology can be described via a path space construction.
This construction makes sense in the $\infty$-categorical framework, and since the Witt groupoids can be modeled as $(\infty,0)$-categories, it can be implemented in this context as well.
Specific versions of this homotopy fiber construction for Brauer-Picard groupoids have appeared in \cite[Section 5]{MR3354332} and \cite[Section 7]{sanford2026puttingbrauerbrauerpicard}.

\begin{question}
    Can we compute the higher homotopy groups of the homotopy fiber directly by using the higher categorical interpretation of homotopy fibers? (conjecturally, these are the $\pi_*\underline{\Mext}(\sVec_\DD)$ groups.)
\end{question}

In the fiber sequence described in \cite[Theorem 7.3]{sanford2026puttingbrauerbrauerpicard}, quasi-invertible objects like $Y\in\sVec_\HH$ do not count as true invertible objects, but they do cause copies of $\ZZ/2\ZZ$ to appear elsewhere.

\begin{question}
    What role does quasi-invertibility play in this sequence?
    Is the presence of $Y\in\sVec_\HH$ reflected in $\pi_2\underline{\Mext}(\sVec_\HH)$? 
\end{question}

\emergencystretch 2em 
\printbibliography

\end{document}